\newtheorem*{mthm}{Main Theorem}
\newtheorem{sat}{Theorem}[section]		
\newtheorem{lem}[sat]{Lemma}
\newtheorem{kor}[sat]{Corollary}			\newtheorem{prop}[sat]{Proposition}
\newtheorem*{defi*}{Definition}			\newtheorem*{bei*}{Example}
\newtheorem*{sat*}{Theorem}				\newtheorem*{kor*}{Corollary}
\newtheorem*{rmk*}{Remark}
\let\ssection=\section
\renewcommand{\section}{\setcounter{equation}{0}\ssection}
\newtheorem*{namedtheorem}{\theoremname}
\newcommand{\theoremname}{testing}
\newenvironment{named}[1]{\renewcommand{\theoremname}{#1}\begin{namedtheorem}}{\end{namedtheorem}}
\theoremstyle{remark}
\newtheorem*{bem}{Remark}
\newcommand{\BC}{\mathbb C}			\newcommand{\BH}{\mathbb H}
\newcommand{\BR}{\mathbb R}			
			\newcommand{\BQ}{\mathbb Q}
\newcommand{\BS}{\mathbb S}			\newcommand{\BZ}{\mathbb Z}
				\newcommand{\BT}{\mathbb T}
			\newcommand{\BG}{\mathbb G}
		\newcommand{\CB}{\mathcal B}
		\newcommand{\CF}{\mathcal F}
\newcommand{\CQ}{\mathcal Q}		
\newcommand{\CS}{\mathcal S}
\newcommand{\actson}{\curvearrowright}
\newcommand{\D}{\partial}
\newcommand{\bs}{\backslash}
\DeclareMathOperator{\SL}{SL}		
\DeclareMathOperator{\PSL}{PSL}		
\DeclareMathOperator{\GL}{GL}		
\DeclareMathOperator{\Id}{Id}		
\DeclareMathOperator{\rank}{rank}
\newcommand{\comment}[1]{}
\DeclareMathOperator{\SO}{SO}
\DeclareMathOperator{\OO}{O}
\DeclareMathOperator{\vcd}{{vcd}}
\DeclareMathOperator{\sing}{sing}
\DeclareMathOperator{\Sp}{Sp}
\DeclareMathOperator{\UU}{U}
\DeclareMathOperator{\gdim}{\underline{gd}}
\DeclareMathOperator{\cdm}{\underline{cd}}
\DeclareMathOperator{\SU}{SU}
\DeclareMathOperator{\Gr}{Gr}
\DeclareMathOperator{\Span}{Span}
\begin{document}

\title[]{Geometric dimension of lattices in classical simple Lie groups}
\author[J. Aramayona, D. Degrijse, C. Martinez-Perez, J. Souto]{Javier Aramayona, Dieter Degrijse, Conchita Martinez-Perez, Juan Souto}

\begin{abstract}
We prove that if $\Gamma$ is a lattice in a classical simple Lie group $G$, then the symmetric space of $G$ is $\Gamma$-equivariantly homotopy equivalent to a proper cocompact $\Gamma$-CW complex of dimension the virtual cohomological dimension of $\Gamma$.
\end{abstract}
\maketitle
\section{Introduction}
Let $\Gamma$ be an infinite discrete group. A $\Gamma$-CW-complex $X$ is said to be a model for $\underline E\Gamma$, or a {\em classifying space for proper actions}, if the stabilizers of the action of $\Gamma$ on $X$ are finite and for every finite subgroup $H$ of $\Gamma$, the fixed point space $X^H$ is contractible. Note that any two models for $\underline{E}\Gamma$ are $\Gamma$-equivariantly homotopy equivalent to each other (see \cite[Def. 1.6 \& Th. 1.9]{Luck2}). A model $X$ is called \emph{cocompact} if the orbit space $\Gamma \setminus X$ is compact. The {\em proper geometric dimension} $\gdim(\Gamma)$ of $\Gamma$ is  by definition the smallest possible dimension of a model of $\underline E\Gamma$. We refer the reader to the survey paper \cite{Luck2} for more details and terminology about these spaces. 

The aim of this paper is to compare the geometric dimension $\gdim(\Gamma)$ of certain virtually torsion-free groups $\Gamma$ with their {\em virtual cohomological dimension} $\vcd(\Gamma)$. Recall that $\vcd(\Gamma)$ is the cohomological dimension of a torsion-free finite index subgroup of $\Gamma$. Due to a result by Serre, this definition does not depend of the choice of finite index subgroup (see \cite[Ch. VIII. Sec. 3]{brown}). In general one has 
$$\vcd(\Gamma)\le\gdim(\Gamma)$$
but this inequality may be strict. Indeed, in \cite{LearyNucinkis} Leary and Nucinkis constructed examples of groups $\Gamma$ for which $\gdim(\Gamma)$ is finite but strictly greater than $\vcd(\Gamma)$. In fact, they show that the gap can be arbitrarily large. Recently, Leary and Petrosyan \cite{LP} were able to construct examples of virtually torsion-free groups $\Gamma$ that admit a  cocompact model for $\underline E\Gamma$ such that $ \vcd(\Gamma)< \gdim(\Gamma)$. Other examples of this nature were subsequently also given in \cite{DMP2}.

On the other hand, one has $\vcd(\Gamma)=\gdim(\Gamma)$ for many important classes of virtually torsion-free groups. For instance, equality holds for elementary amenable groups of type $\mathrm{FP}_\infty$ \cite{KMPN}, $\mathrm{SL}(n,\mathbb{Z})$ \cite{Ash,Luck2},  mapping class groups \cite{Armart},   outer automorphism groups of free groups \cite{Luck2,Vogtmann} and for groups that act properly and chamber transitively on a building, such as Coxeter groups and graph products of finite groups \cite{DMP}. In this paper, we will add lattices in classical simple Lie groups to this list. The classical simple Lie groups are the complex Lie groups 
$$\SL(n,\BC),\ \SO(n,\BC),\ \Sp(n,\BC)$$
and their real forms
$$\SL(n,\BR),\ \SL(n,\BH),\ \SO(p,q),\ \SU(p,q),\ \Sp(p,q),\ \Sp(2n,\BR),\ \SO^*(2n)$$
with conditions on $n$ and $p+q$ to ensure simplicity (see Section \ref{sec: classical groups}). Our main result is the following.

\begin{mthm}
If $\Gamma$ is a lattice in a classical simple Lie group, then $\gdim(\Gamma)=\vcd(\Gamma)$.
\label{main}
\end{mthm}

Note that if $\Gamma$ is a lattice in a simple Lie group $G$, and if $K\subset G$ is a maximal compact subgroup, then the corresponding symmetric space $S=G/K$ is a CAT(0) space on which $\Gamma$ acts discretely by isometries. In particular, $S$ is a model for $\underline E\Gamma$. However, in general this space has larger dimension than the desired $\vcd(\Gamma)$. In fact, $\dim(S)=\vcd(\Gamma)$ if and only if $\Gamma$ is a uniform lattice. In some rare cases such as $\Gamma=\SL(n,\BZ)$ (see \cite{Ash}) or $\Gamma$ a lattice in a  group of real rank 1 (see Proposition \ref{thin-thick}), the associated symmetric space $S$ admits a $\Gamma$-equivariant cocompact deformation retract $X$ with $\dim(X)=\vcd(\Gamma)$. Such an $X$ is consequently also a cocompact model for $\underline{E}\Gamma$. We do not know if such $\Gamma$-equivariant minimal dimensional cocompact deformation retracts of the symmetric space exist in the generality of the Main Theorem. However, the Main Theorem entails the following weaker statement.

\begin{kor} \label{cor}
If $\Gamma$ is a lattice in a classical simple Lie group $G$, then the symmetric space $S$ of $G$ is $\Gamma$-equivariantly homotopy equivalent to a proper cocompact $\Gamma$-CW complex of dimension $\vcd(\Gamma)$.
\end{kor}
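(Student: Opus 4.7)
The plan is to deduce the corollary from the Main Theorem together with the uniqueness (up to $\Gamma$-equivariant homotopy equivalence) of classifying spaces for proper actions, as recorded in \cite[Theorem~1.9]{Luck2}.

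As already noted in the text preceding the corollary, the symmetric space $S = G/K$ is itself a model for $\underline{E}\Gamma$: the lattice $\Gamma$ acts properly and isometrically on the CAT(0) space $S$, so stabilizers are finite, and for any finite subgroup $H \le \Gamma$ the fixed set $S^H$ is nonempty by the Cartan fixed point theorem and convex as the fixed set of isometries on a CAT(0) space, hence contractible. The Main Theorem then supplies a second model $X$ for $\underline{E}\Gamma$ of dimension exactly $\vcd(\Gamma)$. Invoking \cite[Theorem~1.9]{Luck2} for these two models of $\underline{E}\Gamma$ delivers the desired $\Gamma$-equivariant homotopy equivalence between $S$ and $X$, and reading off the dimension of $X$ gives the statement.

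The one subtlety that is not literally contained in the statement of the Main Theorem is the cocompactness of $X$: the definition of $\gdim(\Gamma)$ controls only the minimal dimension of a model, not its cocompactness. I would therefore inspect the proof of the Main Theorem to confirm that the model it constructs is in fact $\Gamma$-cocompact. In the uniform case there is nothing to do, since $S$ itself is cocompact and of dimension $\vcd(\Gamma)$. In the non-uniform case I would expect the proof of the Main Theorem to build $X$ by suitably thickening or collapsing a cocompact core in $S$, in the spirit of Ash's deformation retract for $\SL(n,\BZ)$ and of Proposition~\ref{thin-thick} for rank-one lattices; verifying $\Gamma$-cocompactness of the resulting $\Gamma$-CW complex is the main step that requires attention beyond the bare statement of the Main Theorem.
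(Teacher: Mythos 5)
You have correctly identified the crux of the matter: the Main Theorem only delivers \emph{some} model for $\underline{E}\Gamma$ of dimension $\vcd(\Gamma)$, and nothing in the definition of $\gdim$ forces that model to be cocompact. Your handling of the uniform case and of the rank-one case (via Proposition \ref{thin-thick}) is fine. However, your expectation about how the remaining higher-rank non-uniform case is resolved is wrong, and this is where your proposal has a genuine gap. The proof of the Main Theorem does \emph{not} produce a geometric cocompact core inside $S$; it proceeds entirely through Bredon cohomology. One shows $\cdm(\Gamma)=\vcd(\Gamma)$ and then invokes the L\"uck--Meintrup theorem (Theorem \ref{Luck-Meintrup}) to conclude $\gdim(\Gamma)=\cdm(\Gamma)$. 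That theorem is a general algebraic statement and yields no control on cocompactness, so ``inspecting the proof of the Main Theorem'' in the way you suggest leads nowhere.

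The missing argument, which the paper supplies, is algebraic and uses the finiteness of the Borel--Serre model in an essential way. The Borel--Serre bordification $X$ is a cocompact model for $\underline{E}\Gamma$, so the cellular chain complexes of the fixed-point sets $X^H$ ($H\in\mathcal{FIN}$) assemble into a \emph{finite free} resolution $C_\ast(X^-)\to\underline{\mathbb{Z}}$ of finitely generated $\mathcal{O}_{\mathcal{FIN}}\Gamma$-modules. Since $\cdm(\Gamma)=d:=\vcd(\Gamma)$, the kernel $P$ of $C_{d-1}(X^-)\to C_{d-2}(X^-)$ is projective; because it sits inside a finite-length, finitely generated free resolution, Schanuel's lemma (as in the discussion before \cite[Prop.~6.5]{brown}) shows $P$ is \emph{stably free}. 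Stabilizing by a finitely generated free module one obtains a length-$d$ resolution by finitely generated free $\mathcal{O}_{\mathcal{FIN}}\Gamma$-modules, and the realization procedure of \cite[Prop.~2.5]{LuckMeintrup} and \cite[Th.~13.19]{Luck} then converts this into a \emph{cocompact} $\Gamma$-CW model for $\underline{E}\Gamma$ of dimension $d$. Only at that point does your final step---uniqueness of $\underline{E}\Gamma$ up to $\Gamma$-equivariant homotopy equivalence, applied to $S$ and this cocompact model---apply. In short, you named the right obstruction but guessed the wrong remedy; the actual remedy is the Schanuel/stably-free argument, not a geometric retraction inside the symmetric space.
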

We comment now briefly on the strategy of the proof of the Main Theorem, providing at the same time a section-by-section summary of the paper. Bredon cohomology, introduced by Bredon in \cite{Bredon} to develop an obstruction theory for equivariant extension of maps is an important algebraic tool to study classifying spaces for proper actions. In fact, if $\Gamma$ is a discrete group, then the {\em Bredon cohomological dimension} $\cdm(\Gamma)$ should be thought of as the algebraic counterpart of $\gdim(\Gamma)$. Indeed, it is shown in \cite{LuckMeintrup} that 
$$\cdm(\Gamma)=\gdim(\Gamma)$$
except possibly if $\cdm(\Gamma)=2$ and $\gdim(\Gamma)=3$. It follows in particular that, with the possible exception we just mentioned and which is not going to be relevant in this paper, to prove that $\gdim(\Gamma)=\vcd(\Gamma)$ it suffices to show that $\cdm(\Gamma)=\vcd(\Gamma)$ for lattices $\Gamma$ in classical simple Lie groups $G$. 

Before going any further recall that Borel and Serre \cite{Borel-Serre} constructed a $\Gamma$-invariant bordification $X$ of the symmetric space $S$ associated to $G$; in fact, the {\em Borel-Serre bordification} $X$ is a cocompact model for $\underline E\Gamma$. In section \ref{sec:preli} we recall a few properties of the Borel-Serre bordification, together with the needed terminology about algebraic groups, arithmetic groups and symmetric spaces.

Armed with such a cocompact model $X$ for $\underline E\Gamma$ as the Borel-Serre bordification, we can compute the virtual cohomological dimension of $\Gamma$ as 
$$\vcd(\Gamma)=\max \{  n \in \mathbb{N} \ | \ \mathrm{H}^n_c(X) \neq 0 \}$$
where $\mathrm{H}^n_c(X) $ denotes the compactly supported cohomology of $X$ (see \cite[Cor VIII.7.6]{brown}). In \cite{DMP}, a version of this result was proven for $\cdm(\Gamma)$; we will be using a slight modification of this version.
More concretely, denote by $\CF$ the family of finite subgroups of $\Gamma$ containing the kernel of the action $\Gamma\actson X$, by $X^K$ the fixed point set of some $K\in\CF$, and by $X^K_\mathrm{sing}$ the subcomplex of $X^{K}$ consisting of those cells that are fixed by a finite subgroup of $\Gamma$ that strictly contains $K$. We then have
$$\cdm(\Gamma)= \max\{n \in \mathbb{N} \ | \ \exists K \in \CF \ \mbox{s.t.} \ \mathrm{H}_c^{n}(X^K,X^K_\mathrm{sing})  \neq 0\}$$
In section \ref{sec:tools} we remind the reader of the definition of the Bredon cohomological dimension $\cdm(\Gamma)$ and derive some criteria implying $\vcd(\Gamma)=\cdm(\Gamma)$ for lattices $\Gamma$ in connected simple Lie groups $G$ satisfying appropriate conditions. These conditions are stated in terms of properties of the set $X_{\mathrm{sing}}$ of points in the Borel-Serre bordification $X$ fixed by some non-central element of $\Gamma$.

After these preparatory results we recall what the classical simple Lie groups are in section \ref{sec: classical groups}. For each of these groups $G$ we give a lower bound for the virtual cohomological dimension of a lattice in $G$. This bound is key because the simplest version of the estimates in section \ref{sec:tools} asserts that if $\dim(X_{\mathrm{sing}})<\vcd(\Gamma)$ then $\vcd(\Gamma)=\cdm(\Gamma)$. To estimate the dimensions of fixed point sets we need a few facts on centralizers in compact groups such as $\SO_n$, $\SU_n$ and $\Sp_n$.  We discuss these facts in section \ref{sec:compact}.

Once this is done we can deal with lattices in most classical groups: $\SL(n,\BC)$, $\SO(n,\BC)$, $\Sp(2n,\BC)$, $\SL(n,\BH)$, $\SU(p,q)$, $\Sp(p,q)$, $\Sp(2n,\BR)$ and $\SO^*(2n)$. As we will see in section \ref{sec:easy}, in these cases we have that $\dim(X_{\mathrm{sing}})<\vcd(\Gamma)$ for any lattice $\Gamma$. 

The cases of $\SL(n,\BR)$ and $\SO(p,q)$ are special because they contain lattices with $\dim(X_{\mathrm{sing}})=\vcd(\Gamma)$. In these cases, to obtain that $\cdm(\Gamma)=\vcd(\Gamma)$ we need to prove that the map $H_c^{\vcd(\Gamma)}(X)\to H_c^{\vcd(\Gamma)}(X_{\mathrm{sing}})$ is surjective. We do so by constructing explicit cohomology classes. We treat the case of lattices in $\SL(n,\BR)$ in section \ref{sec:slnr} and that of lattices in $\SO(p,q)$ in section \ref{sec:sopq}. 

The proof of the main theorem and Corollary \ref{cor} will be finalized in Section \ref{sec: proof main}.
\medskip

\noindent \textbf{Acknowledgement.} The first named author was supported by a 2014 Campus Iberus grant, and would like to thank the University of Zaragoza for its hospitality.  The second named author gratefully acknowledges support from the Danish National Research Foundation through the Centre for Symmetry and Deformation (DNRF92) and the third named author  from Gobierno de Arag\'on, European Regional 
Development Funds and
MTM2010-19938-C03-03.

\section{Preliminaries}\label{sec:preli}
In this section we recall some basic facts and definitions concerning algebraic groups, Lie groups, arithmetic groups and symmetric spaces.

\subsection{Algebraic groups}
An \emph{algebraic group} is a subgroup $\BG$ of the special linear group $\SL(N,\BC)$ determined by a collection of polynomials. The algebraic group $\BG$ is defined over a subfield $k$ of $\BC$ if the polynomials in question can be chosen to have coefficients in $k$.  We will be interested in the cases  $k=\BQ$ and $k=\BR$. If $\BG$ is an algebraic group and $R\subseteq\BC$ is a ring we denote by $\BG_R$ the set of those elements of $\BG$ with entries in $R$. It is well-known that for any algebraic group $\BG$ defined over $\BR$ the groups $\BG_\BR$ and $\BG_\BC$ are Lie groups with finitely many connected components. In fact, $\BG$ is (Zariski) connected if and only if $\BG_\BC$ is a connected Lie group. On the other hand, the group of real points of a connected algebraic group defined over $\BR$ need not be connected. A non-abelian algebraic group $\BG$ is {\em simple} if it has no non-trivial, connected normal subgroups. If $\BG$ does not have non-trivial, connected, abelian normal subgroups then $\BG$ is \emph{semisimple}. Note that if $\BG$ is semisimple and defined over $k=\BR$ or $\BC$ then $\BG_k$ is semisimple as a Lie group. The center $Z_{\BG}$ of a semisimple algebraic group $\BG$ is finite and the quotient $\BG/Z_\BG$ is again an algebraic group. Moreover, if the former is defined over $k$ then so is the latter.

An algebraic group $\BT$ is a \emph{torus} if it is isomorphic as an algebraic group to a product $\BC^*\times\dots\times\BC^*$. Equivalently, a connected algebraic group $\BT\subset\SL(N,\BC)$ is a torus if and only if it is diagonalizable, meaning that there is an $A\in\SL(N,\BC)$ such that every element in $A\BT A^{-1}$ is diagonal. If $\BT$ is defined over $k$ then it is said to be \emph{$k$-split} if the conjugating element $A$ can be chosen in $\SL(N,k)$. 

A torus in an algebraic group $\BG$ is an algebraic subgroup that is a torus. A maximal torus of $\BG$ is a torus which is not properly contained in any other other torus. Note that any two maximal tori are conjugate in $\BG$. Also, if $\BG$ is defined over $k$ then any two maximal $k$-split tori are conjugate by an element in $\BG_k$. The $k$-rank of $\BG$, denoted $\rank_k\BG$, is the dimension of some, and hence any, maximal $k$-split torus in $\BG$. 

We refer to \cite{Borel20,Borel-AG} and to \cite{Knapp} for basic facts about algebraic groups and Lie groups respectively.

\subsection{Symmetric spaces}
Continuing with the same notation as above, suppose that $\BG$ is a connected semisimple algebraic group defined over $\BR$ and let $G=\BG_\BR$ be the group of real points. The Lie group $G$ has a maximal compact subgroup $K\subseteq G$.  Recall that any two such maximal compact subgroups are conjugate, and that $K$ is self-normalizing. It follows that we can identify the smooth manifold 
$$S=G/K$$
with the set of all maximal compact subgroups of $G$. Under this identification, left multiplication by $G$ becomes the left action by conjugation on the set of maximal compact subgroups. 

Note that $S$ is contractible because the inclusion of $K$ into $G$ is a homotopy equivalence. In fact, as $S$ admits a $G$-invariant symmetric Riemannian metric of non-positive curvature, it follows that $S$ is diffeomorphic to Euclidean space. From now on we will consider $S$ to be endowed with such a symmetric metric. The space $S$ is called the \emph{symmetric space} of $G$.

Recall that two Lie groups are isogenous if they are locally isomorphic, meaning that their identity components have isomorphic universal covers. Observe that isogenous groups have isometric associated symmetric spaces. 

A {\em flat} $F$ in the symmetric space $S$ is a totally geodesic submanifold isometric to $\BR^d$ for some $d$. A {\em maximal flat} is a flat which is not properly contained in any other flat. It is well-known that the identity component $G^{0}$ of the Lie group $G$ acts transitively on the set of maximal flats. 

Flats and tori are intimately linked to each other: if $\BT\subseteq\BG$ is a $\BR$-split torus with group of real points $T=\BT_\BR$, then the action of $T$ on the symmetric space $S$ leaves invariant a flat $F_T$ of dimension $\dim F_T=\dim T$. Actually, this yields a 1-to-1 correspondence between maximal flats in $F$ and maximal $\BR$-split tori in $G$, and we have the following metric characterization of the real rank: {\em If $\BG$ is a semisimple algebraic group defined over $\BR$, $G$ is the group of real points and $K\subseteq G$ is a maximal compact subgroup, then $\rank_\BR\BG$ is equal to the maximal dimension of a flat in $G/K$.} Note that this implies that if $\rank_\BR\BG\ge 2$ then the sectional curvature of $S$ cannot be negative. In fact, it is well-known that both statements are equivalent: {\em $S$ is negatively curved if and only if $\rank_\BR(\BG)=1$}. 

Most of our work will rely on computations of dimensions of fixed point sets 
$$S^H=\{x\in S \ \vert \ hx=x\ \hbox{for all}\ h\in H\}$$ 
of finite subgroups $H\subset K$. There is an identification 
$$C_G(H)/C_K(H)\to S^H,\ \ gC_K(H)\mapsto gK$$
between the symmetric space $C_G(H)/C_K(G)$ and  the fixed point set $S^H$, where $C_K(H)$ and $C_G(H)$ are the centralizers of $H$ in $K$ and $G$ respectively. We state here for further reference the following consequence of this fact.

\begin{prop}\label{cartan}
Let $\BG$ be a semisimple algebraic group defined over $\BR$, $G$ the group of real points, $K\subset G$ a maximal compact subgroup and $S=G/K$ the associated symmetric space. For any subgroup $H\subset K$ we have
$$\dim S^H=\dim C_G(H)-\dim C_K(H)$$
where $C_K(H)$ and $C_G(H)$ are the centralizers of $H$ in $K$ and $G$ respectively.
\end{prop}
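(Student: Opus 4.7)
The plan is to establish the identification $\phi : C_G(H)/C_K(H) \to S^H$, $gC_K(H) \mapsto gK$ (already asserted just before the proposition) as a diffeomorphism of manifolds; the dimension formula then follows since $C_K(H) = K \cap C_G(H)$ is a closed subgroup of the Lie group $C_G(H)$. Because $S^H$, $C_G(H)$, and $C_K(H)$ all depend only on the closure of $H$ in $K$, I may assume throughout that $H$ is compact. The map $\phi$ is well-defined and injective by direct computation: if $g \in C_G(H)$ and $h \in H \subset K$ then $hgK = ghK = gK$, so the image lies in $S^H$; and if $g, g' \in C_G(H)$ satisfy $gK = g'K$ then $g^{-1}g' \in K \cap C_G(H) = C_K(H)$.

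The main step is surjectivity of $\phi$ onto $S^H$, and this is where I expect the real content to lie. I would argue using non-positive curvature. The basepoint $p = eK$ is fixed by $K$ and hence by $H$, and any $x \in S^H$ is fixed by $H$ by hypothesis; since $S$ is a complete CAT(0) space, the unique geodesic from $p$ to $x$ is fixed pointwise by $H$. Using the Cartan decomposition $\mathfrak{g} = \mathfrak{k} \oplus \mathfrak{p}$, write $x = \exp(X)\cdot p$ with $X \in \mathfrak{p}$, so the geodesic is $t \mapsto \exp(tX) p$. The fixing condition for each $h \in H$ reads $\exp(-tX) h \exp(tX) \in K$ for all $t$; differentiating this curve in $K$ at $t=0$ and translating the result back to $\mathfrak{k}$ yields $X - \Ad(h^{-1})X \in \mathfrak{k}$. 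But $\Ad(h)$ preserves the summand $\mathfrak{p}$, so this element also lies in $\mathfrak{p}$, hence in $\mathfrak{k}\cap\mathfrak{p}=0$. Therefore $\Ad(h)X = X$ for every $h \in H$, which means $\exp(X) \in C_G(H)$ and $\phi(\exp(X) C_K(H)) = x$.

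Finally, $\phi$ is smooth and $C_G(H)$-equivariant, so to confirm it is a diffeomorphism it suffices to check that its differential at the identity coset is an isomorphism. At that coset both tangent spaces naturally identify with $\mathfrak{p}^H$: the source via the splitting $\mathfrak{c}_\mathfrak{g}(H) = \mathfrak{c}_\mathfrak{k}(H) \oplus \mathfrak{p}^H$ coming from the Cartan decomposition, and the target because $T_{eK} S = \mathfrak{p}$ with $H$ acting through $\Ad$. The expected hard part is precisely the surjectivity argument above, which leans on the CAT(0) geometry of the symmetric space; the rest of the proof is formal bookkeeping.
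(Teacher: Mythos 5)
Your proof is correct and complete. The paper does not actually prove this proposition---it simply asserts the identification $C_G(H)/C_K(H)\to S^H$, $gC_K(H)\mapsto gK$, in the sentence preceding the statement and records the dimension formula as an immediate consequence---so there is no proof in the paper to compare against; your argument supplies the details that the authors take for granted. A few remarks on the substance: your reduction to compact $H$ is the right move, since it is what guarantees that $S^H$ is a smooth submanifold and that $\mathfrak c_\mathfrak{g}(H)=\mathfrak g^{\Ad(H)}$ splits $\Ad(H)$-equivariantly as $\mathfrak c_\mathfrak{k}(H)\oplus\mathfrak{p}^H$. The surjectivity step is the one genuinely nontrivial point, and your CAT(0)/differentiation argument is sound; one could alternatively argue purely algebraically that $h\exp(X)K=\exp(X)K$ forces $\exp(\Ad(h)X)K=\exp(X)K$ and then invoke that $\exp\colon\mathfrak p\to S$ is a diffeomorphism to conclude $\Ad(h)X=X$, but this is really the same non-positive-curvature fact in different clothing. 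The final tangent-space computation correctly identifies both sides with $\mathfrak p^H$, closing the argument.
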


With the same notation as in Proposition \ref{cartan} we note that in fact $C_K(H)$ is a maximal compact subgroup of $C_G(H)$. This implies the following.

\begin{kor}\label{complexification}
Let $G=\BG_\BC$ be the group of complex points of a semisimple algebraic group $\BG$, $K\subset G$ be maximal compact, and $S=G/K$. For any subgroup $H\subset K$ we have $\dim S^H=\dim C_K(H)$.
\end{kor}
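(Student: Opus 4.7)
The plan is to combine Proposition \ref{cartan} with the observation that, when $G$ is complex semisimple, the centralizer $C_G(H)$ is essentially a complexification of $C_K(H)$ and hence has twice its real dimension. By Proposition \ref{cartan} one has
\[
\dim S^H = \dim C_G(H) - \dim C_K(H),
\]
so it suffices to prove the doubling identity $\dim C_G(H) = 2\,\dim C_K(H)$.

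For this, I would use that, since $\BG$ is semisimple and $G = \BG_\BC$ is the associated complex semisimple Lie group, any maximal compact subgroup $K\subset G$ is a compact real form: the real Lie algebra $\mathfrak{g}$ of $G$ decomposes as $\mathfrak{g} = \mathfrak{k} \oplus i\,\mathfrak{k}$, where $\mathfrak{k}$ is the Lie algebra of $K$. For every $h \in K$ the adjoint action $\Ad(h)$ is complex linear on $\mathfrak{g}$, so it preserves both summands of this splitting. Consequently, for each $h \in H$,
\[
\mathfrak{g}^{\Ad(h)} = \mathfrak{k}^{\Ad(h)} \oplus i\,\mathfrak{k}^{\Ad(h)},
\]
and intersecting over all $h \in H$ yields
\[
\mathfrak{g}^H = \mathfrak{k}^H \oplus i\,\mathfrak{k}^H.
\]

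Identifying the Lie algebras of $C_G(H)$ and $C_K(H)$ with $\mathfrak{g}^H$ and $\mathfrak{k}^H$ respectively---valid because $H\subset K$ is compact and therefore acts completely reducibly on $\mathfrak{g}$---the last display gives $\dim C_G(H) = 2\,\dim C_K(H)$. Combined with Proposition \ref{cartan}, this produces $\dim S^H = \dim C_K(H)$, as desired.

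The only mildly delicate points are the identification of the Lie algebras of the centralizers with $\mathfrak{g}^H$ and $\mathfrak{k}^H$ (which uses complete reducibility for the compact group $H$) and the Cartan-type splitting $\mathfrak{g} = \mathfrak{k} \oplus i\,\mathfrak{k}$; both are standard facts about complex semisimple Lie groups and their compact real forms, so no substantial obstacle is expected.
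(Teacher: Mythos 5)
Your proof is correct, and the essential content is the same as the paper's, namely the doubling relation $\dim C_G(H) = 2\,\dim C_K(H)$ combined with Proposition \ref{cartan}. But you reach that relation by a genuinely different and more elementary route. The paper's proof invokes algebraic group theory: $C_G(H)$ is the group of complex points of a \emph{reductive} algebraic group (the centralizer of a compact subgroup in a reductive group is reductive), and as such it is the complexification of its maximal compact subgroup $C_K(H)$; the factor of two then follows. You instead work directly at the Lie algebra level, using the compact-form splitting $\mathfrak{g} = \mathfrak{k}\oplus i\mathfrak{k}$ and showing that taking $H$-invariants respects this splitting, which gives $\mathfrak{g}^H = \mathfrak{k}^H \oplus i\,\mathfrak{k}^H$ and hence the doubling. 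The gain of your approach is that it avoids knowing (or citing) that centralizers of compact subgroups in reductive groups are reductive; the cost is a slightly longer argument. One small expository point: the sentence ``$\Ad(h)$ is complex linear on $\mathfrak{g}$, so it preserves both summands'' is a bit quick, since complex linearity alone does not preserve a real form (multiplication by $i$ is complex linear but swaps $\mathfrak{k}$ and $i\mathfrak{k}$). What you need is that $\Ad(h)$ preserves $\mathfrak{k}$ because $h\in K$ and $K$ is a subgroup, and \emph{then} complex linearity gives $\Ad(h)(i\mathfrak{k}) = i\,\Ad(h)(\mathfrak{k}) = i\mathfrak{k}$. With that clarified, the argument is complete. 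The appeal to complete reducibility for identifying $\mathrm{Lie}(C_G(H))$ with $\mathfrak{g}^H$ is also unnecessary: differentiating $\exp(tX)h\exp(-tX)=h$ directly yields $\mathrm{Lie}(C_G(H)) = \mathfrak{g}^H$ for any subgroup $H$, compact or not.
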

\begin{proof}
The group $C_G(H)$ is the group of complex points of a reductive algebraic group. In particular, it is the complexification of its maximal compact subgroup $C_K(H)$, so the dimension of $C_G(H)$ is twice that of $C_K(H)$. 
\end{proof}

We refer to \cite{Mostow,Helgason,Eberlein} for facts about symmetric spaces and to \cite{BGS} for the geometry of manifolds of non-positive curvature.

\subsection{Arithmetic groups}
Recall that a connected semisimple algebraic group $\BG$ is defined over $\BQ$ if and only if the closure of the group $\BG_\BQ$ of rational points contains the identity component $\BG_\BR^0$ of the Lie group $\BG_\BR$ of real points \cite[Proposition 5.8]{Witte}. For any such $\BG$, the group $\BG_\BZ$ is a discrete subgroup of $\BG_\BR$. In fact, $\BG_\BZ$ is a lattice, meaning that $\BG_\BZ\bs\BG_\BR$ has finite Haar measure. A lattice is \emph{uniform} if it is cocompact and \emph{non-uniform} otherwise. 

Although the group $\BG_\BZ$ is the paradigm of an arithmetic group, the definition is slightly more general. Since we are going to be mostly concerned with non-uniform lattices in simple Lie groups, we shall only define arithmetic groups in that restrictive setting. Let $G$ be a non-compact simple Lie group and $\Gamma\subset G$ a non-uniform lattice. The lattice $\Gamma$ is \emph{arithmetic} if there is a connected algebraic group $\BG$ defined over $\BQ$, a finite normal subgroup $Z\subset G$ and a Lie group isomorphism 
$$\phi:G/Z\to\BG_\BR^0$$
where $\BG_\BR^0$ is the identity component of the group of real points of $\BG$ such that $\phi(\Gamma)$ is commensurable to $\BG_\BZ$. After referring the reader to for instance \cite{Witte} for a definition of arithmetic groups in all generality, we note that by moding out the center we may assume without loss of generality that the algebraic group $\BG$ is center-free. This implies that the commensurator of $\BG_\BZ$ in $\BG$ is $\BG_\BQ$ and hence that $\phi(\Gamma)\subset\BG_\BQ$. 

The content of the Margulis's arithmeticity theorem is that under certain mild assumptions, every lattice is arithmetic. We state it only in the restricted setting we will be working in.

\begin{named}{Arithmeticity theorem}[Margulis]
Let $G$ be the group of real points of a simple algebraic group defined over $\BR$. If $G$ is not isogenous to $\SO(1,n)$ or to $\SU(1,n)$, then every lattice in $G$ is arithmetic.
\end{named}

Observe also that both $\SO(1,n)$ and $\SU(1,n)$ have real rank 1. In particular, the arithmeticity theorem applies to every lattice in a group with $\rank_\BR\ge 2$. 

Before moving on, we note that with the same notation as above, both groups $\Gamma$ and $\phi(\Gamma)$ have the same virtual cohomological and proper geometric dimensions. This follows from the following general fact:

\begin{lem}\label{lem:inv-finite extensions}
Suppose  $\Gamma$ is a virtually torsion-free subgroup and  $F\subset\Gamma$ is a normal finite subgroup. Then $\vcd(\Gamma)=\vcd(\Gamma/F)$ and $\gdim(\Gamma)=\gdim(\Gamma/F)$. 
\end{lem}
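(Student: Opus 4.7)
The plan is to prove both equalities by direct constructions, exploiting the finiteness of $F$.

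For the virtual cohomological dimension, I would first pick a torsion-free subgroup $\Gamma_0\subseteq\Gamma$ of finite index. Since $F$ is finite and $\Gamma_0$ is torsion-free, $\Gamma_0\cap F=\{1\}$, so the quotient map $\Gamma\to\Gamma/F$ restricts to an injection on $\Gamma_0$. The image $\Gamma_0 F/F\cong\Gamma_0$ contains $\Gamma_0$ (up to this isomorphism) and $\Gamma_0 F\supseteq\Gamma_0$ has finite index in $\Gamma$, so the image is a torsion-free finite-index subgroup of $\Gamma/F$. Using Serre's theorem (cited in the introduction) one then gets
\[
\vcd(\Gamma)=\cdim(\Gamma_0)=\vcd(\Gamma/F).
\]

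For the geometric dimension the argument is symmetric in both directions. To see $\gdim(\Gamma/F)\le\gdim(\Gamma)$, start with a minimal-dimensional model $X$ for $\underline{E}\Gamma$. Because $F$ is normal, $\Gamma$ preserves the fixed-point subcomplex $X^F$: for $g\in\Gamma$ and $x\in X^F$ one has $f(gx)=g(g^{-1}fg)x=gx$ since $g^{-1}fg\in F$. The induced action of $\Gamma$ on $X^F$ factors through $\Gamma/F$, the stabilizers are images of finite stabilizers hence finite, and for any finite $H/F\subseteq\Gamma/F$ (with $F\subseteq H\subseteq\Gamma$, $H$ finite) one has $(X^F)^{H/F}=X^H$, which is contractible because $X$ is a model for $\underline{E}\Gamma$. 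Thus $X^F$ is a model for $\underline{E}(\Gamma/F)$ of dimension at most $\dim X=\gdim(\Gamma)$.

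Conversely, to see $\gdim(\Gamma)\le\gdim(\Gamma/F)$, take a minimal-dimensional model $Y$ for $\underline{E}(\Gamma/F)$ and view $Y$ as a $\Gamma$-CW complex via $\Gamma\twoheadrightarrow\Gamma/F$. A stabilizer in $\Gamma$ is the preimage of a finite subgroup of $\Gamma/F$, hence an extension of a finite group by $F$ and therefore finite. If $K\subseteq\Gamma$ is finite, then $KF$ is finite, so $KF/F$ is a finite subgroup of $\Gamma/F$, and $Y^K=Y^{KF/F}$ is contractible. Hence $Y$ is a model for $\underline{E}\Gamma$ of dimension $\gdim(\Gamma/F)$.

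There is no real obstacle here; the only subtlety is remembering to pass from a finite subgroup $K\subseteq\Gamma$ to $KF\subseteq\Gamma$ (which is still finite because $F$ is finite) in order to invoke the defining property of $\underline{E}(\Gamma/F)$ in the second direction.
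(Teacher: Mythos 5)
Your argument is correct and follows essentially the same route as the paper's proof: identifying a torsion-free finite-index subgroup of $\Gamma$ with one of $\Gamma/F$ for the $\vcd$ equality, and using $X^F$ for one inequality and pullback along $\Gamma\twoheadrightarrow\Gamma/F$ for the other. You have merely filled in the verifications that the paper leaves implicit.
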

\begin{proof}
Both groups have the same virtual cohomological dimension because they contain isomorphic torsion-free finite index subgroups. To prove the second claim suppose that $X= \underline{E}\Gamma$ is a model for $\Gamma$. The fixed point set $X^F$ admits a $\Gamma/F$-action  and is in fact a model for $\underline{E}(\Gamma/F)$. On the other hand, if $Y=\underline{E}(\Gamma/F)$ is a model for $\Gamma$, then it is also a model for $\Gamma$ via the action induced by the projection $\Gamma\to\Gamma/F$.
\end{proof}

We refer to \cite{Witte,Margulis,Ji-li,Ji-MacPherson} and mostly to the beautiful book \cite{Borel-arith} for facts and definitions on arithmetic groups.

\subsection{Rational flats}
Continuing with the same notation as in the preceding paragraphs, let $G$ be the Lie group of real points of a connected semisimple algebraic group $\BG$ defined over $\BQ$,  $K\subset G$  a maximal compact subgroup and  $S=G/K$ the associated symmetric space. A flat $F$ in $S$ is {\em rational} if the map $F\to \BG_\BZ\bs S$ is proper. A maximal rational flat is then a rational flat which is not contained in any other rational flat. Maximal rational flats arise as orbits of maximal $\BQ$-split tori and in fact we have the following geometric interpretation of the rational rank: {\em If $\BG$ is a semisimple algebraic group defined over $\BQ$, $G=\BG_\BR$ is the group of real points, $K\subset G$ is a maximal compact subgroup and $S=G/K$ is the associated symmetric space, then $\rank_\BQ\BG$ is equal to the dimension of a maximal rational flat.}

Note that $\rank_\BQ\BG=0$ if and only if $\BG_\BZ\bs S$ is compact, meaning that $\BG_\BZ$ has no non-trivial unipotent element. Similarly, $\rank_\BQ\BG=1$ if and only if non-trivial unipotent elements in $\BG_\BZ$ are contained in unique maximal unipotent subgroups. For higher $\BQ$-rank there is no such simple characterization but one has however the following useful fact.

\begin{prop}\cite[Prop. 9.15]{Witte}\label{prop-rank2}
Assume that $G=\BG_\BR$ is the group of  real points of a connected simple algebraic group defined over $\BQ$. If $\rank_\BQ\BG\ge 2$, then $\BG_\BZ$ contains a subgroup commensurable to either $\SL(3,\BZ)=\SL(3,\BC)_\BZ$ or to $\SO(2,3)_\BZ$.
\end{prop}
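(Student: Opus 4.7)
The plan is to exploit the structure of the relative $\BQ$-root system of $\BG$ in order to produce an explicit $\BQ$-subgroup $\BH\subset\BG$ whose integer points already realise one of the two commensurability classes.

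First, I would fix a maximal $\BQ$-split torus $\BT\subseteq\BG$ and form the relative root system $\Phi=\Phi(\BG,\BT)$. Because $\BG$ is simple and defined over $\BQ$, $\Phi$ is an irreducible (possibly non-reduced) root system whose rank equals $\rank_\BQ\BG\ge 2$. The key combinatorial input I would next invoke is a case-by-case inspection of Bourbaki's classification (types $A_n$, $B_n$, $C_n$, $D_n$, $BC_n$, $E_6$, $E_7$, $E_8$, $F_4$, $G_2$): every irreducible root system of rank at least two admits a \emph{closed} rank-two sub-root system $\Psi$ of type $A_2$ or $B_2=C_2$. For instance $A_n$ contains $A_2$, all of $B_n,C_n,D_n,BC_n$ contain $B_2$ (via two short/long simple roots or via restriction to a suitable plane), and the exceptional types all visibly carry an $A_2$-subdiagram.

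Next, let $\BT'$ be the identity component of $\bigcap_{\alpha\in\Psi}\ker\alpha\subseteq\BT$, a $\BQ$-split sub-torus of $\BT$, and let $\BS\subseteq\BT$ be a complementary two-dimensional $\BQ$-split sub-torus on which $\Psi$ is non-degenerate. The centralizer $\BL=Z_\BG(\BT')$ is a Levi-type reductive $\BQ$-subgroup, and its derived subgroup $\BH=[\BL,\BL]$ is semisimple over $\BQ$ with maximal $\BQ$-split torus $\BS$ and relative root system precisely $\Psi$. Since the number of roots agrees with $\dim\BS$, the group $\BH$ is $\BQ$-split; hence $\BH$ is $\BQ$-isogenous either to $\SL(3)$ (if $\Psi=A_2$) or to $\Sp(4)\cong\mathrm{Spin}(2,3)$, whose adjoint form is $\SO(2,3)$ (if $\Psi=B_2$).

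Finally, by Borel--Harish-Chandra, $\BH_\BZ=\BH\cap\BG_\BZ$ is an arithmetic subgroup of $\BH_\BR$, and the isogeny $\BH\to\SL(3)$ or $\BH\to\SO(2,3)$, being defined over $\BQ$, carries $\BH_\BZ$ to a group commensurable with $\SL(3,\BZ)$ or $\SO(2,3)_\BZ$ respectively. This exhibits the desired subgroup of $\BG_\BZ$.

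The step I expect to be the chief obstacle is the verification that the subgroup $\BH$ attached to a closed sub-root system $\Psi$ is actually a $\BQ$-defined semisimple subgroup with $\BQ$-root system equal to $\Psi$ (so that $\BQ$-splitness of $\BT$ is inherited by $\BH$). This relies on the precise structure of Levi factors of parabolic $\BQ$-subgroups and on the standard fact that, for a closed sub-root system, the subgroup generated by the corresponding root groups and the sub-torus $\BS$ is a reductive $\BQ$-subgroup isogenous to a product of $\BQ$-split simple factors of type matching $\Psi$; the combinatorial step~2 above is also genuinely case-by-case and would need to be documented with care.
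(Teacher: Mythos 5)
The paper does not prove this proposition; it is quoted directly from \cite[Prop.~9.15]{Witte}, so there is no in-paper argument to compare against. Evaluating your proposal on its own merits, I think it has two genuine gaps, both concentrated in the step you yourself flag as the chief obstacle.

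First, the Levi construction does not in general produce a subgroup with relative root system $\Psi$. Setting $\BT'=(\bigcap_{\alpha\in\Psi}\ker\alpha)^0$ and $\BL=Z_\BG(\BT')$, the relative $\BQ$-root system of $\BL$ is $\Phi\cap\Span_\BQ(\Psi)$, not $\Psi$. Whenever the chosen rank-two subsystem $\Psi$ is not generated by a subset of the simple roots of $\Phi$, these differ. This failure is not marginal: it occurs precisely in the rank-two cases $\Phi=G_2$ and $\Phi=BC_2$, where $\Span(\Psi)=\Span(\Phi)$ forces $\BT'$ to be central and $\BL=\BG$, so the reduction to an $A_2$ or $B_2$ group never happens. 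These are exactly the cases the case-by-case step was supposed to handle.

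Second, and more fundamentally, even granting a connected semisimple $\BQ$-subgroup $\BH$ with relative root system $A_2$ or $B_2$, the inference ``since the number of roots agrees with $\dim\BS$, the group $\BH$ is $\BQ$-split'' is incorrect. Splitness requires the absolute rank to equal the $\BQ$-rank; your construction controls only the $\BQ$-rank. A concrete counterexample is $\BG=\SO(q)$ for a six-variable rational quadratic form $q$ of signature and Witt index $(2,4)$ and $2$: here $\Phi$ is already $B_2$, so your $\BH$ is $\BG$ itself, a group of $\BQ$-rank $2$ but absolute rank $3$, hence not $\BQ$-split, and $\BG_\BZ$ is a lattice in $\SO(2,4)$, not commensurable to $\SL(3,\BZ)$ or $\SO(2,3)_\BZ$. (The proposition still holds for this $\BG$ because $q$ restricts to a five-variable form of Witt index $2$ on a rational hyperplane, giving a $\BQ$-embedding $\SO(2,3)\hookrightarrow\BG$, but the root-theoretic recipe you propose never finds that subgroup.) The same phenomenon occurs for quasi-split but non-split unitary groups $\SU(h)$ over an imaginary quadratic field. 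The missing ingredient is to drop down to one-dimensional $\BQ$-unipotent subgroups inside the relative root groups $\BU_\alpha$ (which are $\BQ$-split unipotent, hence contain copies of $\BG_a$, even when $\dim\BU_\alpha>1$) and to choose them so that the Chevalley commutator relations close up into an honest $\BQ$-split $\SL_3$ or $\Sp_4$; that careful choice is what the argument in Witte--Morris actually carries out and what your proposal skips.
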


We note at this point that $\SO(2,3)$ is isogenous to $\Sp(4,\BR)$.
\medskip

Before moving on we add a comment which will come handy later on. Suppose  that $\BG$ is a semisimple algebraic group defined over $\BQ$, let $G=\BG_\BR$ be the group of real points and $S=G/K$ the corresponding symmetric space. Recall that $\BG_\BQ$ is dense in the identity component $G^0$ of $G$ and that $G^0$ acts transitively on the set of maximal flats. It follows that the $\BG_\BQ$-orbit of any maximal flat is dense in the set of all maximal flats. In particular we get that if there is a maximal flat which is rational, then the set of all rational maximal flats is dense in the set of all maximal flats. We record this fact in the following lemma.

\begin{lem}\label{density flats}
Let $\BG$ be a semisimple algebraic group defined over $\BQ$, let $G=\BG_\BR$ be the group of real points, $K\subset G$ a maximal compact subgroup, $S=G/K$ the corresponding symmetric space and $F\subset S$ a maximal flat. If $\rank_\BQ(\BG)=\rank_\BR(\BG)$ and if $U\subset G^0$ is open and non-empty, then there is $g\in U$ such that $gF$ is a  maximal  rational flat.\qed
\end{lem}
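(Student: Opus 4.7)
The plan is to reduce the statement to a density argument on the space $\CM$ of maximal flats of $S$, exploiting the density of $\BG_\BQ$ in $G^0$ recorded just above the lemma.

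First, I would use the hypothesis $\rank_\BQ(\BG)=\rank_\BR(\BG)$ to produce at least one maximal rational flat $F_0$. Picking a maximal $\BQ$-split torus $\BT\subset\BG$, the rank equality forces $\BT$ to already be a maximal $\BR$-split torus, and the unique maximal flat $F_0\subset S$ preserved by $T=\BT_\BR$ (furnished by the bijection recalled in the preliminaries) is then simultaneously maximal and rational.

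Next, I would realize $\CM$ as the homogeneous space $G^0/N$, where $N$ is the stabilizer of $F_0$ in $G^0$, using the stated transitivity of $G^0$ on $\CM$. Under the resulting orbit map $\pi\colon G^0\to\CM$, $g\mapsto gF_0$, the key claim is that the subset $\CM_\BQ\subset\CM$ of maximal rational flats coincides with $\pi(\BG_\BQ)=\BG_\BQ\cdot F_0$. This uses three ingredients: every maximal rational flat is preserved by some maximal $\BQ$-split torus, any two maximal $\BQ$-split tori are $\BG_\BQ$-conjugate, and the bijection between maximal $\BR$-split tori and maximal flats is $G$-equivariant, so conjugation of tori by $g\in\BG_\BQ$ corresponds to translation of flats by $g$.

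Finally, the lemma would follow formally. Writing $F=hF_0$ for some $h\in G^0$ (again by transitivity), the translate $Uh\subset G^0$ is open and non-empty. Since $\pi$ is open (being the quotient map for the topological-group action), the image $\pi(Uh)=\{gF:g\in U\}$ is open and non-empty in $\CM$. The density of $\BG_\BQ$ in $G^0$ pushes forward to give that $\CM_\BQ=\pi(\BG_\BQ)$ is dense in $\CM$, so $\pi(Uh)\cap\CM_\BQ\neq\emptyset$, producing the desired $g\in U$ with $gF$ a maximal rational flat. The only step that requires any real thought is the middle one, namely the identification $\CM_\BQ=\BG_\BQ\cdot F_0$; after that, the density argument is essentially automatic.
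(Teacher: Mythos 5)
Your proof is correct and takes essentially the same route as the paper: it combines the density of $\BG_\BQ$ in $G^0$ with the transitivity of $G^0$ on maximal flats, observes that translating a rational flat by $\BG_\BQ$ produces rational flats, and finishes via openness of the orbit map. The only point where you do slightly more than needed is the identification $\CM_\BQ=\BG_\BQ\cdot F_0$ — establishing the full equality requires that maximal $\BQ$-split tori be $\BG_\BQ$-conjugate, whereas the inclusion $\BG_\BQ\cdot F_0\subseteq\CM_\BQ$ already gives the density of rational flats and hence the lemma.
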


\subsection{Models for $\underline E\Gamma$ if $\rank_\BR\BG=1$}
Continuing with the same notation as above suppose that $G=\BG_\BR$ is the group of real points of a semisimple algebraic group $\BG$, $K\subset G$ is a maximal compact subgroup, $S=G/K$ is the associated symmetric space and $\Gamma\subset G$ is a lattice. Being a simply connected complete manifold of non-positive curvature, the symmetric space $S$ is a CAT(0)-space. This implies that $S$ is a model for $\underline E\Gamma$ that, in general, is not cocompact. Our next aim is to explain the existence of cocompact models. Assume that $\rank_\BR(\BG)=1$ and recall that this means that the symmetric space $S$ is negatively curved. Now, it follows from the Margulis Lemma, or equivalently from the existence of the thin-thick decomposition, that if $\Gamma\subset G$ is a lattice then there is a $\Gamma$-invariant set $\CB\subset S$ whose connected components are open horoballs and such that $X=S\setminus\CB$ is a smooth manifold with boundary on which $\Gamma$ acts cocompactly. Moreover, the convexity of the distance function on $S$  easily implies that $X$ is also a model for $\underline E\Gamma$. At this point we note that one can extract from $X$ an even smaller model for $\underline E\Gamma$, namely the `cut locus from the boundary', i.e. the set $\hat X$ of those points $x\in X$ for which there are at least 2 minimizing geodesics to $\D X$. This is an analytic subset with empty interior to which $X$ retracts in a $\Gamma$-equivariant way. In other words, $\hat X$ is a model for $\underline E\Gamma$ of one dimension less than $X$. Summarizing, we have the following well-known result.

\begin{prop}\label{thin-thick}
Let $\BG$ be an algebraic group defined over $\BR$ with $\rank_\BR\BG=1$, let $G\subset\BG$ be the group of real points, $K\subset G$ a maximal compact subgroup and $S=G/K$ the associated symmetric space. Suppose that $\Gamma\subset G$ is lattice. If $\Gamma$ is not cocompact, then there are cocompact models $X,\hat X$ for $\underline E\Gamma$ such that
\begin{itemize}
\item[-] $X$ is a manifold of dimension $\dim X=\dim S$, and
\item[-] $\hat X$ is a CW-complex of dimension $\dim\hat X=\dim S-1$.
\end{itemize}
\end{prop}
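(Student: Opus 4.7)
\emph{Step 1: Choosing the horoballs.} Since $\rank_\BR \BG = 1$, the symmetric space $S$ has strictly negative sectional curvature. The Margulis Lemma applied to $\Gamma \actson S$ yields $\epsilon > 0$ such that the $\epsilon$-thin part admits the usual thin-thick decomposition: each component is either a bounded Margulis tube around a short closed geodesic, or a horoball based at a parabolic fixed point of $\Gamma$ on the ideal boundary $\partial_\infty S$. Since $\Gamma$ is a non-uniform lattice, there are finitely many $\Gamma$-orbits of cusps; selecting one open horoball per cusp, shrunk so that all $\Gamma$-translates are pairwise disjoint, yields a $\Gamma$-invariant collection $\CB$ of open horoballs. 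Standard finite-covolume reduction theory in rank one shows that $X := S \setminus \CB$ is $\Gamma$-cocompact.

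\emph{Step 2: $X$ is a model for $\underline E\Gamma$.} Stabilizers for $\Gamma \actson X$ are finite because the action on $S$ is proper. For each $B \in \CB$, the nearest-point projection $r_B: \overline B \to \partial B$ is well-defined, since horoballs are convex in the CAT(0) space $S$, and is $\Stab_\Gamma(B)$-equivariant. Gluing these projections with the identity on $X$ gives a continuous $\Gamma$-equivariant retraction $r: S \to X$, and the straight-line homotopy along the geodesic from $x$ to $r(x)$ exhibits $X$ as a deformation retract of $S$. For a finite $H \subset \Gamma$, the fixed set $S^H$ is a totally geodesic CAT(0) submanifold; since $r$ commutes with $H$, it restricts to a deformation retract $S^H \to X^H$, so $X^H$ is contractible. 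Hence $X$ is a cocompact model for $\underline E\Gamma$ of dimension $\dim S$.

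\emph{Step 3: Constructing $\hat X$.} Let $\hat X \subset X$ consist of those points admitting at least two minimizing geodesics to $\partial X$. Because $\partial X$ is a real-analytic hypersurface in the real-analytic manifold $S$, the classical theory of the cut locus from a smooth hypersurface equips $\hat X$ with a $\Gamma$-invariant CW structure of empty interior and dimension exactly $\dim S - 1$. The gradient flow of the distance function $d(\cdot, \partial X)$ on $X \setminus \hat X$, pushing each point away from $\partial X$ until it reaches $\hat X$, provides a $\Gamma$-equivariant strong deformation retract of $X$ onto $\hat X$; by the same equivariance argument as in Step 2, this restricts on fixed-point sets to deformation retracts $X^H \to \hat X^H$, so $\hat X^H$ is contractible.

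The main technical obstacle is Step 3: verifying that the cut locus is genuinely a CW-complex of codimension exactly one, and that the gradient flow retraction extends continuously across the non-smooth locus. This is classical but delicate, and uses the real-analytic structure of the symmetric metric on $S$. Step 1 also relies on the (non-elementary) Margulis Lemma together with the finiteness of the number of cusps for finite-covolume lattices in rank one, but both are well-documented in the rank-one setting.
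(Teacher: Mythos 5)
Your argument mirrors the paper's proof almost verbatim: delete a $\Gamma$-invariant family of disjoint open horoballs to obtain the manifold-with-boundary $X$, check that the equivariant retraction $S\to X$ descends to fixed-point sets, and then pass to the cut locus $\hat X$ of $\partial X$. The one point you (and the paper) leave implicit is that the analytic/subanalytic theory of cut loci only gives the upper bound $\dim\hat X\le\dim S-1$; the matching lower bound is a consequence of $\dim\hat X\ge\gdim(\Gamma)\ge\vcd(\Gamma)=\dim S-1$, where the last equality comes from Poincar\'e--Lefschetz duality applied to the manifold $X$ (as in Proposition \ref{borel-serre-vcd1}, which only uses the $X$ half of the statement).
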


\begin{bem}
Note that Proposition \ref{thin-thick} admits the following generalization: {\em Suppose that $\Gamma\actson M$ is a smooth isometric action on a complete Riemannian manifold such that $M$ is an $\underline E\Gamma$. If $\Gamma\bs M$ is not compact, then $\gdim(\Gamma)\le\dim M-1$.} Indeed, since $\Gamma \setminus M$ is not compact one can find a $\Gamma$-invariant set $Z\subset M$ such that
\begin{itemize}
\item[(1)] each component of $Z$ is a geodesic ray,
\item[(2)] the distance between points in any two components is at least $1$,
\item[(3)] the set $Z$ is maximal in the sense that there is no minimizing geodesic ray $\gamma$ such that $t\mapsto d(Z,\gamma(t))$ tends to $\infty$. 
\end{itemize}
By removing an appropriate $\Gamma$-invariant regular neighborhood of $Z$ from $M$, one obtains a manifold $X$ with boundary of the same dimension as $M$ that is still a  model for $\underline{E}\Gamma$. Then we proceed as above by taking the cut-locus to the boundary in $X$ to obtain a model $\hat{X}$ for $\underline{E}\Gamma$ of one dimension less.
\end{bem}

Continuing with the same notation as above Proposition \ref{thin-thick}, observe that $\D X$ is either empty or the disjoint union of infinitely many horospheres. Since horospheres are contractible we conclude from Poincar\'{e}-Lefschetz duality that $H_c^k(X)=0$ unless $k=\dim X$ and $\D X=\emptyset$, or $k=\dim X-1$ and $\D X\neq\emptyset$. Taking into account that 
$$\vcd(\Gamma)=\max \{  n \in \mathbb{N} \ | \ \mathrm{H}^n_c(X) \neq 0 \}$$
we obtain the following.

\begin{prop}\label{borel-serre-vcd1}
Let $\BG$ be an algebraic group defined over $\BR$ with $\rank_\BR\BG=1$, let $G\subset\BG$ be the group of real points, $K\subset G$ a maximal compact subgroup and $S=G/K$ the associated symmetric space. Suppose that $\Gamma\subset G$ is a lattice. If $\Gamma$ is cocompact, then $\vcd\Gamma=\dim X$. Otherwise we have $\vcd\Gamma=\dim X-1$.
\end{prop}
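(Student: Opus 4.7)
The plan is to combine the identity
$$\vcd(\Gamma)=\max\{n\in\BN : H^n_c(Y)\neq 0\}$$
(valid for any cocompact model $Y$ for $\underline E\Gamma$, as already invoked in the introduction) with Poincar\'e--Lefschetz duality applied to the model produced by Proposition \ref{thin-thick}.

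If $\Gamma$ is cocompact I would take $Y=S$, viewing $S$ as the cocompact model $X$ with $\D X=\varnothing$. Since $S$ is a contractible orientable manifold of dimension $d=\dim S$ without boundary, Poincar\'e duality gives $H^k_c(S)\cong H_{d-k}(S)$, which is $\BZ$ for $k=d$ and zero otherwise, so $\vcd\Gamma=\dim S=\dim X$.

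If $\Gamma$ is not cocompact I would apply Proposition \ref{thin-thick} to obtain a cocompact, contractible, orientable $d$-manifold with boundary $X$ whose boundary $\D X$ is a disjoint union of horospheres, each contractible (being diffeomorphic to a Euclidean space). Poincar\'e--Lefschetz duality then yields $H^k_c(X)\cong H_{d-k}(X,\D X)$, reducing matters to computing the right-hand side. Feeding the contractibility of $X$ and of each component of $\D X$ into the long exact sequence of the pair $(X,\D X)$ makes it collapse to $H_i(X,\D X)=0$ for $i\neq 1$, while $H_1(X,\D X)$ is identified with the kernel of the augmentation $H_0(\D X)\to H_0(X)=\BZ$. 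Consequently $H_c^k(X)$ vanishes unless $k=d-1$, and $H_c^{d-1}(X)\neq 0$ as soon as $\D X$ has at least two components; the desired $\vcd\Gamma=\dim X-1$ follows.

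The only point needing separate justification is that $\D X$ has at least two connected components when $\Gamma$ is non-uniform, and this is where non-uniformity really enters: the stabilizer in $\Gamma$ of any horoball sits inside a parabolic subgroup of $G$, hence is virtually nilpotent, while $\Gamma$ is not, so the stabilizer has infinite index and each $\Gamma$-orbit of horoballs is infinite; in particular $\D X$ has infinitely many components. The remaining hypotheses (orientability and contractibility of $X$) are automatic from $S\cong\BR^d$ and from $X$ being a model for $\underline E\Gamma$. There is no deep obstacle here; the entire argument is essentially the duality-plus-long-exact-sequence computation sketched in the paragraph preceding the proposition.
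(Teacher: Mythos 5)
Your argument is correct and is essentially the same Poincar\'e--Lefschetz duality computation the paper sketches in the paragraph immediately preceding the proposition; the paper simply asserts that $\partial X$ is either empty or a disjoint union of infinitely many contractible horospheres and then reads off $H^k_c(X)$ via duality, exactly as you do. The only thing you add beyond the paper's sketch is the (correct) justification that the horoball orbit is infinite because the horoball stabilizer is virtually nilpotent whereas $\Gamma$ is not, which the paper takes for granted.
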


Combining Proposition \ref{thin-thick} and Proposition \ref{borel-serre-vcd1}, and recalling that $\vcd(\Gamma)$ is always a lower bound for $\gdim(\Gamma)$, we get the following.

\begin{kor}\label{crit0}
Let $\BG$ be an algebraic group defined over $\BR$ and let $\Gamma\subset\BG_\BR$ be a lattice in the group of real points of $\BG$. If $\rank_\BR\BG=1$ then $\gdim(\Gamma)=\vcd(\Gamma)$.\qed
\end{kor}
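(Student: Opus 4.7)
The plan is to treat the cocompact and non-cocompact cases separately, both times using the fact that $\vcd(\Gamma)\le\gdim(\Gamma)$ holds in general, so it suffices to produce a cocompact model for $\underline E\Gamma$ whose dimension is at most $\vcd(\Gamma)$.

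First I would dispose of the cocompact case. If $\Gamma\subset G=\BG_\BR$ is a uniform lattice, then the symmetric space $S=G/K$ is itself a cocompact $\underline E\Gamma$: it is CAT(0), so the fixed set $S^H$ of any finite $H\subset\Gamma$ is a non-empty closed convex subset (hence contractible), the stabilizers in $\Gamma$ are finite because they are discrete and compact, and $\Gamma\bs S$ is compact by hypothesis. Since $\Gamma\bs S$ is then a closed aspherical orbifold, a torsion-free finite-index subgroup of $\Gamma$ has a closed aspherical manifold as a classifying space of dimension $\dim S$, so $\vcd(\Gamma)=\dim S$. Hence $\gdim(\Gamma)\le\dim S=\vcd(\Gamma)\le\gdim(\Gamma)$, giving equality.

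For the non-cocompact case, Proposition \ref{thin-thick} provides a cocompact model $\hat X$ for $\underline E\Gamma$ (the cut locus from $\partial X$) together with the manifold-with-boundary model $X$ of dimension $\dim S$, satisfying
\begin{equation*}
\dim\hat X=\dim S-1.
\end{equation*}
By Proposition \ref{borel-serre-vcd1}, in the non-uniform rank-one situation $\vcd(\Gamma)=\dim X-1=\dim S-1$. Therefore
\begin{equation*}
\vcd(\Gamma)\le\gdim(\Gamma)\le\dim\hat X=\dim S-1=\vcd(\Gamma),
\end{equation*}
which forces $\gdim(\Gamma)=\vcd(\Gamma)$.

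There is no real obstacle here: both cases are immediate from results already established earlier in the paper, and the only minor point to be careful about is that Proposition \ref{thin-thick} and Proposition \ref{borel-serre-vcd1} are stated only for non-uniform lattices, so one must handle the uniform case by hand using the CAT(0) geometry of $S$ and Poincar\'e duality (or equivalently the fact that a closed aspherical manifold realizes the cohomological dimension of its fundamental group).
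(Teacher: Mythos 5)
Your proof is correct and follows essentially the same route as the paper: combine Proposition \ref{thin-thick} and Proposition \ref{borel-serre-vcd1} with the general inequality $\vcd(\Gamma)\le\gdim(\Gamma)$. The only difference is that you spell out the (implicit) uniform case directly via the CAT(0) geometry of $S$ and Poincar\'e duality, whereas the paper relies on the reader noticing that in that case $X=S$ is itself a cocompact model of dimension $\vcd(\Gamma)$.
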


We refer to \cite{BGS} for a discussion of the Margulis Lemma and its consequences. 

\subsection{Models for $\underline E\Gamma$ if $\rank_\BR\BG\ge 2$}\label{subsec-borel-serre}
Suppose now that $G$ is a simple Lie group with $\rank_\BR\BG\ge 2$ and let $\Gamma\subset G$ be a non-uniform lattice, which is arithmetic by Margulis's theorem. Recall that this means that there is a connected algebraic group $\BG$ defined over $\BQ$, a finite normal subgroup $Z\subset G$ and a Lie group isomorphism 
$$\phi:G/Z\to\BG_\BR^0$$
where $\BG_\BR^0$ is the identity component of the group of real points of $\BG$ such that $\phi(\Gamma)\subset\BG_\BQ$ is commensurable to $\BG_\BZ$. Suppose that $K$ is a maximal compact subgroup of $G$ and note that also $\phi(K)$ is a maximal compact subgroup of $\BG_\BR^0$. In particular, the symmetric spaces $S=G/K$ and $\BG_\BR^0/\phi(K)$ are identical. 

Borel and Serre \cite{Borel-Serre} constructed an analogue of the manifold $X$ above called {\em Borel-Serre bordification of $S$ associated to the rational structure of $\BG$}, or just simpler {\em Borel-Serre bordification of $S$} (see also \cite{Grayson,Ji-MacPherson,Leuzinger-BS}). The manifold $X$ is a bordification of the symmetric space $S$, invariant under $\BG_\BQ$, and on which $\BG_\BZ$, and hence $\phi(\Gamma)$ acts cocompactly. Moreover, since the construction of $X$ behaves well with respect to subgroups one has that $X$ is a cocompact model for $\underline E\Gamma$. 

\begin{sat}[Borel-Serre]\label{borel-serre-thm}
Let $G$ be a simple Lie group with $\rank_\BR\BG\ge 2$, $K\subset G$ a maximal compact subgroup, $S=G/K$ the associated symmetric space and $\Gamma\subset G$ a non-uniform lattice.

The Borel-Serre bordification $X$ of $S$ is a cocompact model for $\underline E\Gamma$. Moreover, if $H\subset\Gamma$ is a finite group then the fixed point set $X^H$ is $C_\Gamma(H)$-equivariantly homeomorphic to the Borel-Serre bordification of $S^H$, meaning in particular that $X^H$ is a model for for $\underline E(C_\Gamma(H))$. 
\end{sat}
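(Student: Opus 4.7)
The plan is to combine the Margulis arithmeticity theorem with Lemma \ref{lem:inv-finite extensions} to reduce to the case $\Gamma=\BG_\BZ$, then to invoke the results of Borel and Serre \cite{Borel-Serre}, and finally to add a fixed-point refinement for the second assertion.

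First I would reduce to the arithmetic case. By Margulis's arithmeticity theorem and the subsequent discussion, there exist a connected algebraic group $\BG$ defined over $\BQ$, a finite normal subgroup $Z\subset G$, and an isomorphism $\phi:G/Z\to\BG_\BR^0$ with $\phi(\Gamma)\subset\BG_\BQ$ commensurable to $\BG_\BZ$. Both $\vcd$ and $\gdim$ are unchanged under quotienting by a finite normal subgroup by Lemma \ref{lem:inv-finite extensions}, and are manifestly preserved under passage to finite-index subgroups. It therefore suffices to prove the theorem for $\Gamma=\BG_\BZ$ acting on the Borel-Serre bordification $X$ attached to this $\BQ$-structure; the symmetric space $S$ is then $\BG_\BR^0/\phi(K)$.

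Next I would invoke the Borel-Serre construction. Recall that $X$ is a contractible real-analytic manifold with corners whose interior is $S$, with a boundary face $e(P)$ for every proper rational parabolic $P\subset\BG$, decomposing canonically as $e(P)\cong N_P\times S_{M_P}$, where $N_P$ is the unipotent radical of $P$ and $S_{M_P}$ is the symmetric space of a Levi factor. The action of $\BG_\BQ$ on $S$ extends continuously to $X$, permutes faces via $g\cdot e(P)=e(gPg^{-1})$, and on each face $N_P$ acts by free translation on its own factor. Borel and Serre prove that $\BG_\BZ\bs X$ is compact and that any torsion-free finite-index subgroup of $\BG_\BZ$ acts freely on $X$. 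These two facts give at once that the $\BG_\BZ$-action is cocompact and that point stabilizers are finite, so the first assertion will follow once we know that $X^H$ is contractible for every finite $H\subset\Gamma$. That contractibility will be a by-product of the second assertion.

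For the second assertion I would fix a finite $H\subset\Gamma$ and argue face-by-face. Since $H\subset\BG_\BQ$, the centralizer $C_\BG(H)$ is a reductive algebraic group defined over $\BQ$; by Proposition \ref{cartan} its real locus has symmetric space $S^H$, and $C_\Gamma(H)$ is commensurable to $C_\BG(H)_\BZ$. The key tool is the standard bijection between rational parabolics of $C_\BG(H)$ and those rational parabolics $P$ of $\BG$ that are normalised by $H$, given by $P\mapsto P\cap C_\BG(H)$. For such $H$-stable $P$, the $H$-fixed points in $e(P)\cong N_P\times S_{M_P}$ are $N_P^H\times S_{M_P}^H$, and these are precisely the unipotent radical and Levi symmetric space of $P\cap C_\BG(H)$ inside $C_\BG(H)$; faces whose parabolic is not $H$-stable contribute no $H$-fixed points at all. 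Assembling these identifications yields a $C_\Gamma(H)$-equivariant homeomorphism between $X^H$ and the Borel-Serre bordification of $S^H$ built from the induced $\BQ$-structure on $C_\BG(H)$. Contractibility of $X^H$ is then automatic, which completes the first assertion.

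The hard part is gluing the face-by-face identification into a genuine homeomorphism of manifolds with corners: one must check that the corner structure, the choice of Langlands decomposition, and the parabolic incidence relations all behave functorially under the passage from $\BG$ to $C_\BG(H)$. This functoriality is essentially built into the Borel-Serre construction, but it requires some bookkeeping at the deeper corners; I would appeal to \cite{Borel-Serre} together with the treatments of fixed-point sets of finite groups on Borel-Serre bordifications in \cite{Ji-MacPherson,Leuzinger-BS}.
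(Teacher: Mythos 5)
The paper does not actually prove this theorem; it is stated as a result of Borel and Serre, with the preceding paragraph pointing to \cite{Borel-Serre, Grayson, Ji-MacPherson, Leuzinger-BS}. So your proposal is attempting to supply a proof that the paper simply cites. Your reduction to the arithmetic case and your use of Borel--Serre for cocompactness and finite stabilizers are correct. The problem is in the ``key tool'' for the second assertion.

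The claimed bijection $P\mapsto P\cap C_\BG(H)$ between $H$-stable rational parabolics of $\BG$ and rational parabolics of $C_\BG(H)$ is false, in both directions. Take $\BG=\SL_3$ and $H=\langle\,\mathrm{diag}(1,-1,-1)\,\rangle\subset\SL_3(\BZ)$, so that $C_\BG(H)=S(\GL_1\times\GL_2)$. The maximal rational parabolic $P_0=\mathrm{Stab}(\langle e_1\rangle)$ is $H$-stable, but $P_0\cap C_\BG(H)=C_\BG(H)$ is not a proper parabolic; nonetheless $e(P_0)^H\neq\emptyset$ (one checks $N_{P_0}^H=\{1\}$ while $H$ is central in $M_{P_0}$, so the fixed set is a full copy of $X_{M_{P_0}}$), so this face \emph{does} contribute to $X^H$ even though it is invisible to your dictionary. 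In the other direction the map is not injective: for a rational line $W\subset\langle e_2,e_3\rangle_\BQ$, both $\mathrm{Stab}(W)$ and $\mathrm{Stab}(\langle e_1\rangle\oplus W)$ are $H$-stable and intersect $C_\BG(H)$ in the same proper parabolic. Accordingly the face-by-face identification as you state it cannot be assembled into a homeomorphism.

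The underlying issue is that $C_\BG(H)$ is only reductive, with a positive-dimensional $\BQ$-split center, and the Borel--Serre bordification of the symmetric space of a reductive group handles the split center by an extra geodesic action; some $H$-stable parabolics of $\BG$ (such as $P_0$ above) correspond to these ``split center going to infinity'' directions, not to proper parabolics of $C_\BG(H)$. You do flag that the deeper-corner bookkeeping is the hard part and defer to references, but the specific combinatorial dictionary you propose to feed into that bookkeeping is incorrect and would need to be replaced by the correct parametrization (taking into account the split center and identifying when distinct $H$-stable parabolics give the same face) before the argument could go through.
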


Recall that each boundary component of the manifold provided by Proposition \ref{thin-thick} is contractible. In general, this is not true for the Borel-Serre bordification $X$. In fact, $\D X$ is homotopy equivalent to the spherical building associated to the group $\BG_\BQ$ and hence to a wedge $\bigvee_{i \in I} S^{\rank_\BQ(\BG)-1}$ of spheres of dimension $\rank_\BQ(\BG)-1$. This implies that 
\begin{equation}\label{equation-raining-a-lot}
\mathrm{H}_{\rank_\BQ(\BG)}(X,\partial X)\cong \bigoplus_{i \in I}\mathbb{Z}
\end{equation}
and hence the following (see \cite{Borel-Serre}).

\begin{sat}[Borel-Serre]\label{borel-serre-vcd2}
Let $G$ be a simple Lie group with $\rank_\BR G\ge 2$, $K\subset G$ a maximal compact subgroup, $S=G/K$ the associated symmetric space and $\Gamma\subset G$ a non-uniform lattice. If $\BG$ is a simple algebraic group defined over $\BQ$ such that there is a finite group $Z\subset G$ and an isomorphism $\phi:G/Z\to\BG_\BR^0$ with $\phi(\Gamma)$  commensurable to $\BG_\BZ$, then $\vcd\Gamma=\dim S-\rank_\BQ\BG$.
\end{sat}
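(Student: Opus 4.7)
The plan is to compute $\vcd(\Gamma)$ directly from the compactly supported cohomology of the Borel--Serre bordification $X$. By Theorem \ref{borel-serre-thm}, $X$ is a cocompact model for $\underline E \Gamma$, so by the formula recalled earlier,
\[
\vcd(\Gamma) = \max\{ n \in \BN \mid \mathrm{H}^n_c(X) \neq 0 \};
\]
the task is therefore to identify the top non-vanishing degree of $\mathrm{H}^*_c(X)$.

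First, $X$ is an orientable manifold (with corners) of dimension $\dim S$, so Poincar\'e--Lefschetz duality yields
\[
\mathrm{H}^n_c(X) \cong \mathrm{H}_{\dim S - n}(X, \D X).
\]
Since $X$ deformation retracts onto $S$, it is contractible, and the long exact sequence of the pair $(X,\D X)$ collapses to give $\mathrm{H}_n(X, \D X) \cong \widetilde{\mathrm{H}}_{n-1}(\D X)$ for every $n \geq 1$. So everything reduces to computing the reduced homology of $\D X$.

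Next, equation \eqref{equation-raining-a-lot} identifies $\D X$ up to homotopy with a wedge $\bigvee_{i \in I} S^{\rank_\BQ \BG - 1}$ (via the Solomon--Tits theorem applied to the spherical Tits building of $\BG_\BQ$). Hence $\widetilde{\mathrm{H}}_{n-1}(\D X)$ is non-zero precisely when $n = \rank_\BQ \BG$, in which case it is a non-trivial free abelian group. Substituting back gives $\mathrm{H}^n_c(X) \neq 0$ if and only if $\dim S - n = \rank_\BQ \BG$, and therefore $\vcd(\Gamma) = \dim S - \rank_\BQ \BG$.

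The main technical point is the application of Poincar\'e--Lefschetz duality: the Borel--Serre bordification is naturally a manifold with corners, so one should either invoke a version of duality valid for such spaces, or argue that rounding the corners does not affect the relevant (co)homology groups. Orientability is automatic since $X$ is contractible. Invariance of $\vcd$ under passing from $\Gamma$ to $\phi(\Gamma)$, and between commensurable subgroups of $\BG_\BR^0$, is standard (the former via Lemma \ref{lem:inv-finite extensions}, the latter via common finite-index subgroups). Everything above is carried out in \cite{Borel-Serre}.
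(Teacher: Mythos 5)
Your proposal is correct and follows essentially the same approach as the paper's sketch (and Borel--Serre's original argument): identify $\partial X$ with a wedge of spheres of dimension $\rank_\BQ(\BG)-1$ via the Solomon--Tits theorem, reduce $H_*(X,\partial X)$ to $\widetilde{H}_{*-1}(\partial X)$ using contractibility of $X$, and then read off $\vcd(\Gamma)$ from $H^*_c(X)$ via Poincar\'e--Lefschetz duality. The paper states the key isomorphism $\mathrm{H}_{\rank_\BQ(\BG)}(X,\partial X)\cong\bigoplus_{i\in I}\mathbb{Z}$ and defers to \cite{Borel-Serre}; your write-up simply makes explicit the duality and long-exact-sequence steps, correctly flagging the manifold-with-corners subtlety and orientability along the way.
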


Before moving on recall that the $\BQ$-rank of $\BG$ is bounded  above by the $\BR$-rank. In particular we get the following lemma from Theorem \ref{borel-serre-vcd2}.

\begin{lem}\label{borel-serre}
Let $G$ be a simple Lie group with $\rank_\BR G \ge 2$, $K\subset G$ a maximal compact subgroup, $S=G/K$ the associated symmetric space and $\Gamma\subset G$ a non-uniform lattice. If $\BG$ is a simple algebraic group defined over $\BQ$ such that there are a finite group $Z\subset G$ and an isomorphism $\phi:G/Z\to\BG_\BR^0$ with $\phi(\Gamma)$  commensurable to $\BG_\BZ$, then 
$$\vcd(\Gamma)\ge\dim G/K-\rank_\BR G$$
with equality if and only of $\rank_\BQ(\BG)=\rank_\BR(\BG)$.\qed
\end{lem}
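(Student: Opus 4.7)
The plan is to deduce this statement directly from Theorem \ref{borel-serre-vcd2}, using the inequality $\rank_\BQ \BG \le \rank_\BR \BG$ that was flagged immediately before the statement. Concretely, Theorem \ref{borel-serre-vcd2} gives the exact identity
$$\vcd(\Gamma) \;=\; \dim S - \rank_\BQ \BG.$$
Since any $\BQ$-split torus of $\BG$ is in particular $\BR$-split, a maximal $\BQ$-split torus has dimension at most that of a maximal $\BR$-split torus, so $\rank_\BQ \BG \le \rank_\BR \BG$. Substituting yields
$$\vcd(\Gamma) \;=\; \dim S - \rank_\BQ \BG \;\ge\; \dim S - \rank_\BR \BG,$$
with equality exactly when $\rank_\BQ \BG = \rank_\BR \BG$.

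The only real bookkeeping to do is to identify the numerical invariants on the two sides of the isogeny. The hypotheses give a surjection $\phi: G/Z \to \BG_\BR^0$ with finite kernel, which is therefore a local isomorphism of Lie groups. Hence $G$, $G/Z$, $\BG_\BR^0$ and $\BG_\BR$ all share the same Lie algebra, so in particular the same dimension and the same maximal $\BR$-split torus dimension; this justifies writing $\dim G/K = \dim S$ and $\rank_\BR G = \rank_\BR \BG$. Furthermore $\phi(K)$ is a maximal compact subgroup of $\BG_\BR^0$, so the associated symmetric space of $\BG_\BR^0$ coincides with $S$ and Theorem \ref{borel-serre-vcd2} applies to $\phi(\Gamma)$; by Lemma \ref{lem:inv-finite extensions} the virtual cohomological dimensions of $\Gamma$ and $\phi(\Gamma)$ agree, since $\Gamma$ and $\phi(\Gamma) = \Gamma Z / Z$ differ by the finite kernel $\Gamma \cap Z$.

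There is no genuine obstacle here: the statement is essentially a repackaging of Theorem \ref{borel-serre-vcd2} as an inequality bound phrased in terms of the $\BR$-rank, together with the trivial comparison of rational and real ranks. The only thing one must be a little careful about is that all the listed quantities are well-defined invariants of the isogeny class, which is handled by the observations in the previous paragraph.
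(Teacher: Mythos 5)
Your proof is correct and takes exactly the route the paper intends: the lemma is stated with a \qed precisely because it follows immediately from Theorem \ref{borel-serre-vcd2} combined with the inequality $\rank_\BQ\BG\le\rank_\BR\BG$, which the paper flags in the sentence just before the lemma. The extra bookkeeping you supply about invariance under the isogeny $\phi$ is consistent with the paper's earlier remarks (Lemma \ref{lem:inv-finite extensions} and the discussion in Section \ref{subsec-borel-serre}).
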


The appeal of Lemma \ref{borel-serre} is that the rational rank of a group defined over $\BQ$ depends on something as complicated as its isomorphism type over $\BQ$ while on the other hand, the real rank only depends on the type over $\BR$, something which is much simpler to deal with. 
\medskip

Before moving on we wish to add a comment related to the isomorphism \eqref{equation-raining-a-lot}. Sticking to the notation above, suppose that $F\simeq\BR^{\rank_\BQ(\BG)}$ is a maximal rational flat and let $\D F\subset \partial X$ be its sphere at infinity, which is homologically essential in $\D X$. We conclude the following.

\begin{prop}\label{prop: flat cohom class}
Let $\BG$ be a simple algebraic group defined over $\BQ$, let $G\subset\BG$ be the group of real points, $K\subset G$ a maximal compact subgroup, $S=G/K$ the associated symmetric space, $X$ the Borel-Serre bordification of $S$ and $F$ a maximal rational flat in $S$. The inclusion of $F$ into $S$ extends to map 
$$(F,\partial F)\to(X,\partial X)$$
such that the induced map on homology
\[   \mathbb{Z}  \cong \mathrm{H}_{\rank_\BQ(\BG)}(F,\partial F)\rightarrow  \mathrm{H}_{\rank_\BQ(\BG)}(X,\partial X)\]
takes a generator of  $\mathrm{H}_{\rank_\BQ(\BG)}(F,\partial F)$ to a generator of an infinite cyclic summand of $\mathrm{H}_{\rank_\BQ(\BG)}(X,\partial X)$.
\end{prop}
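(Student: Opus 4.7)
The plan is to identify, up to homotopy, the pair given by the closure of $F$ in $X$ with an apartment of the rational spherical Tits building $\Delta_\BQ(\BG)$ associated to $\BG$, and then to invoke the Solomon--Tits theorem. Set $r=\rank_\BQ(\BG)$.

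Recall that $F$ is the orbit $\BT_\BR\cdot x_0$ of a maximal $\BQ$-split torus $\BT\subset\BG$ through some basepoint $x_0\in S$, and hence $F\cong\BR^{r}$. In the construction of Borel--Serre, $\partial X$ is covered by boundary faces $e(P)$ indexed by proper $\BQ$-parabolic subgroups $P$ of $\BG$, and the nerve of this covering realises $\Delta_\BQ(\BG)$. By inspection of the construction, the closure $\bar F$ of $F$ in $X$ attaches precisely to those faces $e(P)$ with $P\supset\BT$; the union of these faces is the apartment $\Sigma_\BT$ of $\Delta_\BQ(\BG)$ determined by $\BT$, and $\bar F$ itself is a closed $r$-disk with $\partial\bar F=\Sigma_\BT\subset\partial X$. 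The inclusion $F\hookrightarrow S\subset X$ therefore extends to a map of pairs $(F,\partial F)\cong(\bar F,\partial\bar F)\to(X,\partial X)$.

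Since $X$ is contractible, the long exact sequence of the pair provides an isomorphism
$$\partial\colon \mathrm{H}_{r}(X,\partial X)\xrightarrow{\ \cong\ }\tilde{\mathrm{H}}_{r-1}(\partial X)\cong \tilde{\mathrm{H}}_{r-1}(\Delta_\BQ(\BG)),$$
under which the image of a generator of $\mathrm{H}_{r}(F,\partial F)\cong\BZ$ is exactly the fundamental class $[\Sigma_\BT]$ of the apartment. By the Solomon--Tits theorem, the top reduced homology of a spherical building of rank $r$ is free abelian and every apartment is an embedded $(r-1)$-sphere whose fundamental class is a primitive element, hence a generator of an infinite cyclic summand. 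Pulling this back along $\partial^{-1}$ yields the statement of the proposition.

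The main obstacle is the geometric input in the second paragraph: confirming that the closure of the rational flat $F$ in the Borel--Serre bordification coincides with the apartment $\Sigma_\BT$. This is essentially built into Borel--Serre's construction, where the corner attached at a $\BQ$-parabolic $P\supset\BT$ restricts along $F$ to the standard compactification of the corresponding Weyl chamber of $\BT$; assembling these compactified chambers over all $P\supset\BT$ tiles the sphere $\Sigma_\BT$. Once this identification is accepted, the remainder is a formal consequence of the contractibility of $X$ together with Solomon--Tits.
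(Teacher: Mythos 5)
Your proof follows essentially the same route as the paper's: the paper derives the proposition from the fact that $\partial X$ is homotopy equivalent to the rational Tits building (a wedge of $(\rank_\BQ\BG-1)$-spheres by Solomon--Tits), so that $\mathrm{H}_{\rank_\BQ\BG}(X,\partial X)\cong\bigoplus_I\BZ$, together with the observation that the sphere at infinity $\partial F$ of a maximal rational flat is homologically essential in $\partial X$. Your write-up simply makes explicit what the paper leaves as a remark, namely that $\partial F$ is the apartment $\Sigma_\BT$ associated to the maximal $\BQ$-split torus defining $F$ and that apartment classes are primitive in the top homology of a spherical building.
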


%
%
%
%

\section{Cohomological tools}\label{sec:tools}

In this section we recall the definition of the Bredon cohomological dimension $\cdm(\Gamma)$ of a discrete group $\Gamma$ and derive some criteria implying $\vcd(\Gamma)=\cdm(\Gamma)$ for certain lattices $\Gamma$ in simple Lie groups.
\medskip

Let $\Gamma$ be a discrete group and let $\CF$ be a family of subgroups of $\Gamma$ that is closed under conjugation and intersections. The \emph{orbit category} $\mathcal{O}_{\CF}\Gamma$ is the category whose objects  are left coset spaces $\Gamma/H$ with $H \in \CF$ and where the morphisms are all $\Gamma$-equivariant maps between the objects. An \emph{$\mathcal{O}_{\CF}\Gamma$-module} is a contravariant functor \[M: \mathcal{O}_{\CF}\Gamma \rightarrow \mathbb{Z}\mbox{-mod}.\] The \emph{category of $\mathcal{O}_{\CF}\Gamma$-modules}, denoted by $\mbox{Mod-}\mathcal{O}_{\CF}\Gamma$, has all the $\mbox{Mod-}\mathcal{O}_{\CF}\Gamma$-modules as objects and all the natural transformations between these objects as morphisms. One can show that $\mbox{Mod-}\mathcal{O}_{\CF}\Gamma$ is an abelian category that contains enough projective modules to construct projective resolutions. Hence, one can construct functors $\mathrm{Ext}^{n}_{\mathcal{O}_{\CF}\Gamma}(-,M)$ that have all the usual properties. The \emph{$n$-th Bredon cohomology of $\Gamma$} with coefficients $M \in \mbox{Mod-}\mathcal{O}_{\CF}\Gamma$ is by definition
\[ \mathrm{H}^n_{\CF}(\Gamma,M)= \mathrm{Ext}^{n}_{\mathcal{O}_{\CF}\Gamma}(\underline{\mathbb{Z}},M), \]
where $\underline{\mathbb{Z}}$ is the functor that maps all objects to $\mathbb{Z}$ and all morphisms to the identity map. For more details, we refer the reader to \cite[Section 9]{Luck}.

The \emph{Bredon cohomological dimension of $\Gamma$ for proper actions}, denoted by $\cdm(\Gamma)$ is defined as
\[ \cdm(\Gamma) = \sup\{ n \in \mathbb{N} \ | \ \exists M \in \mbox{Mod-}\mathcal{O}_{\mathcal{FIN}}\Gamma :  \mathrm{H}^n_{\mathcal{FIN}}(\Gamma,M)\neq 0 \}, \]
where $\mathcal{FIN}$ is the family of all finite subgroups of $\Gamma$. As mentioned in the introduction, the invariant $\cdm(\Gamma)$ should the viewed as the algebraic counterpart of $\gdim(\Gamma)$. Indeed, it is known that these two notions of dimension coincide (see \cite[0.1]{LuckMeintrup}), except for the possibility that one could have $\cdm(\Gamma)=2$ but $\gdim(\Gamma)=3$ (see \cite{BradyLearyNucinkis}).

\begin{sat}[L\"uck-Meintrup]\label{Luck-Meintrup}
If $\Gamma$ be a discrete group with $\cdm(\Gamma)\ge 3$, then $\gdim(\Gamma)=\cdm(\Gamma)$.
\end{sat}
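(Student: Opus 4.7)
The plan is to prove the two inequalities separately; one is essentially formal, while the other requires a geometric realization argument in the spirit of Bredon's equivariant obstruction theory combined with an Eilenberg-Ganea-type cell-trading.

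The easy direction is $\cdm(\Gamma)\le \gdim(\Gamma)$, which does not need the hypothesis $\cdm(\Gamma)\ge 3$. First I would fix any model $Y$ for $\underline E\Gamma$ of dimension $n=\gdim(\Gamma)$. Assigning to each coset $\Gamma/H$ the cellular chain complex of the fixed point set $Y^H$ defines a chain complex of $\mathcal{O}_{\mathcal{FIN}}\Gamma$-modules. Because stabilizers of cells are finite and each $Y^H$ is contractible, this chain complex is a free resolution of $\underline{\mathbb{Z}}$ (freeness of each chain module follows from the orbit decomposition of cells and the standard description of free Bredon modules as sums of $\mathbb{Z}[\Gamma/H,-]$). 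Its length is $\dim Y=n$, so $\mathrm{Ext}^k_{\mathcal{O}_{\mathcal{FIN}}\Gamma}(\underline{\mathbb{Z}},-)$ vanishes for $k>n$, and hence $\cdm(\Gamma)\le n=\gdim(\Gamma)$.

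For the reverse inequality $\gdim(\Gamma)\le \cdm(\Gamma)$, set $n=\cdm(\Gamma)\ge 3$ and start with any (a priori high-dimensional) model $Y$ for $\underline E\Gamma$. Using the hypothesis $\cdm(\Gamma)=n$, the $n$-th syzygy in the Bredon chain complex of $Y$ is projective, so by an algebraic truncation one obtains a projective resolution
\[ 0\to P_n\to P_{n-1}\to\cdots\to P_0\to \underline{\mathbb{Z}}\to 0 \]
of length exactly $n$. The plan is then to realize this resolution geometrically: starting from the $(n-1)$-skeleton of a suitable equivariant CW-complex and using equivariant obstruction theory in the orbit category, one attaches free $\Gamma$-cells indexed by generators of $P_n$ so as to kill the corresponding homotopy classes in each fixed point set, and then caps off by arguing that no higher cells are needed because the resolution already has length $n$. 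This produces an $n$-dimensional $\Gamma$-CW complex $X$ with finite stabilizers, whose fixed point sets $X^H$ are acyclic; one promotes acyclicity to contractibility via an equivariant Whitehead-type argument, which is where the condition $n\ge 3$ enters.

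The main obstacle is precisely this last step, which is the equivariant analogue of the Eilenberg-Ganea phenomenon. When $n\ge 3$, every fixed-point set $X^H$ constructed above is a simply connected acyclic CW-complex of dimension $\le n$, hence contractible by Hurewicz and Whitehead, and the $\Gamma$-equivariant realization goes through without obstruction. In dimension $2$, however, one cannot guarantee simple connectivity of the $X^H$ purely from cohomological data, and this is the reason for the exceptional case in the statement, corresponding to the gap $\cdm(\Gamma)=2$ versus $\gdim(\Gamma)=3$ discussed in \cite{BradyLearyNucinkis}. Throughout, the bookkeeping must be done inside $\mathcal{O}_{\mathcal{FIN}}\Gamma$ rather than over $\mathbb{Z}\Gamma$, so the standard non-equivariant arguments have to be recast functor by functor on orbit types, which is the technical heart of \cite{LuckMeintrup}.
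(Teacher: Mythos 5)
The theorem you are asked to prove is \emph{cited} in the paper, not proved there: it is attributed to \cite[0.1]{LuckMeintrup} and the underlying construction is \cite[Th.~13.19]{Luck}. So there is no internal proof to compare against; I evaluate your sketch on its own merits, which turn out to be broadly in the right direction but with two genuine gaps.

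Your easy direction $\cdm(\Gamma)\le\gdim(\Gamma)$ is fine. For the hard direction, the overall strategy (truncate the Bredon free resolution at the $n$-th syzygy, realize geometrically, invoke $n\ge 3$ to promote acyclicity of fixed-point sets to contractibility) is the right one, and you correctly identify that the $n\ge 3$ hypothesis is used at the Hurewicz/Whitehead stage and that the $\cdm=2$, $\gdim=3$ exception is of Eilenberg--Ganea type. However, there is a concrete hole in the middle. After truncation the top module $P_n$ is only \emph{projective} as an $\mathcal{O}_{\mathcal{FIN}}\Gamma$-module, and your step ``attach free $\Gamma$-cells indexed by generators of $P_n$'' is not a well-posed operation for a mere projective: geometric realization of a chain module requires a \emph{free} Bredon module (a direct sum of representables $\mathbb{Z}[\Gamma/H,-]$). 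The missing ingredient is a stabilization step: either an Eilenberg swindle (replace $P_n$ by $P_n\oplus F$ with $F$ free of countably infinite rank, which forces $P_n\oplus F$ to be free) or, in the finitely generated setting, an appeal to Schanuel's lemma to show $P_n$ is stably free and then stabilize by a finitely generated free module — this is exactly what the present paper does in its proof of Corollary \ref{cor}. This stabilization must be mirrored geometrically by wedging extra $(n-1)$-spheres onto the fixed-point sets before attaching $n$-cells; it is a real part of the argument, not bookkeeping.

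The second gap is smaller but should be addressed: you assert that each $X^H$ produced by your construction is simply connected and then deduce contractibility, but simple connectivity must be \emph{arranged}, not inferred from the cohomological data. In the Lück--Meintrup construction this is done by starting from the $(n-1)$-skeleton of an existing model for $\underline E\Gamma$ — whose fixed points are already simply connected once $n-1\ge 2$ — and then attaching only cells of dimension $n\ge 3$, which cannot perturb $\pi_1$ of any fixed-point set. Your sketch gestures at this (``starting from the $(n-1)$-skeleton of a suitable equivariant CW-complex'') but never closes the loop; when $n=2$ one would be forced to attach $2$-cells, which can change $\pi_1$, and this is precisely why the argument breaks down and the exceptional case exists.
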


As we mentioned in the introduction, if $\Gamma$ is virtually torsion-free, then $\vcd(\Gamma)\leq \cdm(\Gamma)$ but equality does not hold in general. Recall also that if $X$ is any cocompact model for $\underline E\Gamma$, then $\vcd(\Gamma)=\max \{  n \in \mathbb{N} \ | \ \mathrm{H}^n_c(X) \neq 0 \}$, where $\mathrm{H}^n_c(X) $ denotes the compactly supported cohomology of $X$ (see \cite[Cor. 7.6]{brown}). There is an analogue of this result for $ \cdm(\Gamma)$.

\begin{sat}{\rm \cite[Th. 1.1]{DMP}}\label{th: bredon compact support} 
Let $\Gamma$ be a group that admits a cocompact model $X$ for $\underline{E}\Gamma$. Then
\[    \cdm(\Gamma)= \max\{n \in \mathbb{N} \ | \ \exists K \in \CF \ \mbox{s.t.} \ \mathrm{H}_c^{n}(X^K,X^K_\mathrm{sing})  \neq 0\}   \] 
where $\CF$ is the family of finite subgroups of $\Gamma$ containing the kernel of the action $\Gamma\actson X$, and where $X^K_\mathrm{sing}$ is the subcomplex of $X^{K}$ consisting of those cells that are fixed by a finite subgroup of $G$ that strictly contains $K$.
\end{sat}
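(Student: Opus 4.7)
The plan is to compute Bredon cohomology using the cellular chain complex of the cocompact model $X$ as a projective resolution, combined with a filtration of coefficient modules by conjugacy class of stabilizer, and to identify the resulting graded pieces with relative compactly supported cohomology of stabilizer strata.

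First, I would assemble the cellular chain complex of $X$ into a chain complex $\tilde{C}_*$ of $\mathcal{O}_\CF\Gamma$-modules by the rule $\tilde{C}_n(\Gamma/H)=C_n(X^H)$. Since $X$ is a $\Gamma$-CW complex with cell stabilizers in $\CF$ and each $X^H$ is contractible (as $X$ is a model for $\underline{E}\Gamma$), this is a free, hence projective, resolution of the constant functor $\underline{\BZ}$, of length $\dim X$. This already gives the upper bound $\cdm(\Gamma)\le\dim X$ and reduces the task to computing $\mathrm{H}^*_\CF(\Gamma,M)=\mathrm{Ext}^*_{\mathcal{O}_\CF\Gamma}(\underline{\BZ},M)$ for suitably chosen coefficient modules $M$.

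Second, for each $K\in\CF$ I would introduce a ``test'' Bredon module $E_K$ that sends $\Gamma/H$ to the free abelian group on $\Gamma$-equivariant maps $\Gamma/K\to\Gamma/H$, modulo those that factor through some $\Gamma/H'$ with $K\subsetneq H'\in\CF$. Applying $\Hom_{\mathcal{O}_\CF\Gamma}(-,E_K)$ to $\tilde{C}_*$ should yield, cell by cell, the compactly supported cellular cochain complex of the pair $(X^K,X^K_\mathrm{sing})$: only cells of $X^K$ with stabilizer exactly $K$ survive, and the Weyl group $W_\Gamma(K)=N_\Gamma(K)/K$ acts freely on the open stratum $X^K\setminus X^K_\mathrm{sing}$, so compact supports appear naturally from the cocompactness of the quotient. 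This would give the isomorphism
\begin{equation*}
\mathrm{H}^n_\CF(\Gamma,E_K)\cong \mathrm{H}^n_c(X^K,X^K_\mathrm{sing}).
\end{equation*}
Third, I would establish that every $\mathcal{O}_\CF\Gamma$-module admits a filtration, indexed by the natural partial order on conjugacy classes of subgroups in $\CF$, whose successive quotients are direct sums of modules of the form $E_K$. Long exact sequence chases in Bredon cohomology would then reduce vanishing of $\mathrm{H}^n_\CF(\Gamma,M)$ for all $M$ to vanishing of $\mathrm{H}^n_c(X^K,X^K_\mathrm{sing})$ for all $K$, while the reverse inequality follows from evaluating on an $E_K$ attaining the maximum. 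Together these give the stated equality.

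The main obstacle is the identification in the second step: one must verify carefully that the generators of $\tilde{C}_n(\Gamma/K)$ coming from cells of $X^K$ whose stabilizer strictly contains $K$ are indeed killed upon applying $\Hom(-,E_K)$. This is a delicate bookkeeping exercise in the orbit category, relying on the observation that a $\Gamma$-equivariant map $\Gamma/K\to\Gamma/H$ with $H\supsetneq K$ factors through $\Gamma/H$ and therefore represents a class in the submodule being quotiented out. A secondary subtlety is the correct handling of the orientation characters of $W_\Gamma(K)$ on the normal bundles of strata, which must match the conventions that make compactly supported cohomology (rather than some twisted variant) the natural target.
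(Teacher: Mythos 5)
This statement is cited from \cite[Th.~1.1]{DMP}; the present paper does not contain a proof of it, so there is no ``paper's own proof'' to compare against. That said, your proposal is broadly in the spirit of the argument in \cite{DMP} (and is echoed in the one-directional inductive argument the paper does carry out in Proposition~\ref{gen}), but two of its steps do not hold up as written.

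First, the test module $E_K$ you describe is not an $\mathcal{O}_{\CF}\Gamma$-module. You define $E_K(\Gamma/H)$ using the morphism set $\mathrm{mor}(\Gamma/K,\Gamma/H)$, which is covariant in $H$, whereas Bredon coefficient modules must be contravariant. The correct object uses $\mathrm{mor}(\Gamma/H,\Gamma/K)$, and the quotient should remove those morphisms $gK$ for which $g^{-1}Hg$ is a \emph{proper} subgroup of $K$ (equivalently, those factoring through $\Gamma/K'\to\Gamma/K$ with $K'\subsetneq K$), not maps factoring through some $\Gamma/H'$ with $H'\supsetneq K$. With that corrected definition, $E_K(\Gamma/H)$ vanishes unless $H$ is conjugate to $K$, in which case it is $\BZ[W_\Gamma(K)]$, and one can then match $\Hom_{\mathcal{O}_\CF\Gamma}(C_n(X^-),E_K)$ against the group of compactly supported relative cochains of $(X^K,X^K_{\sing})$. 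Note that this $E_K$ is a proper quotient of the free module $\BZ[\mathrm{mor}(-,\Gamma/K)]$; the free module itself sees orbits whose stabilizer merely embeds in $K$ and therefore does \emph{not} compute $\mathrm{H}^*_c(X^K,X^K_{\sing})$ directly, so the passage to the quotient is essential and needs to be handled carefully.

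Second, the assertion that \emph{every} $\mathcal{O}_\CF\Gamma$-module has a finite filtration whose subquotients are direct sums of $E_K$'s is far too strong and I see no reason to believe it (already the constant functor $\underline{\BZ}$ does not obviously admit one). The inequality $\cdm(\Gamma)\le\max\{\dots\}$ should instead be obtained by a filtration of the space, not of the coefficient module: filter $X$ by isotropy length as in Proposition~\ref{gen} of this paper, so that the successive quotients of the cellular cochain complex are $\bigoplus_{[K]}C^*_c(X^K,X^K_{\sing})$, and then run long exact sequences (or the associated spectral sequence). Alternatively, since $X$ is cocompact, $\underline{\BZ}$ has a finite-length resolution by finitely generated free $\mathcal{O}_\CF\Gamma$-modules, so $\cdm(\Gamma)$ is detected on free coefficient modules $\BZ[\mathrm{mor}(-,\Gamma/K)]$; these are then related to the $E_K$'s by short exact sequences induced by the poset of conjugacy classes, and an induction over that poset replaces the sweeping filtration claim. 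The lower bound $\cdm(\Gamma)\ge\max\{\dots\}$ comes, as you say, from evaluating on an $E_K$ realizing the maximum.
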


\begin{bem}
In \cite{DMP}, this theorem is stated in such a way that $\CF$ is the family of all finite subgroups of $\Gamma$. Both formulations are easily seen to be equivalent  (compare with Lemma \ref{lem:inv-finite extensions}).
\end{bem}

Before moving on we introduce some notation which we will use throughout the paper. If $X$ is a model for $\underline E\Gamma$ then $X_{\mathrm{sing}}$ is the subspace of $X$ consisting of points whose stabilizer is strictly larger than the kernel of the action $\Gamma\actson X$.
\medskip

After these general reminders we focus in the concrete situation we are interested in, namely that of lattices in simple Lie groups of higher rank. Recall that such lattices are arithmetic by Margulis's theorem. Notation will be as in section \ref{subsec-borel-serre}:

\begin{itemize}
\item $G=\BG_\BR$ is the group of real points of a connected simple algebraic group with $\rank_\BR\BG\ge 2$.
\item $\Gamma\subset G$ is a non-uniform lattice.
\item $K\subset G$ is a maximal compact subgroup.
\item $S=G/K$ is the associated symmetric space.
\item $X$ is the Borel-Serre bordification of $S$.
\end{itemize}
Recall that the Borel-Serre bordification $X$ of $S$ is a model for $\underline E\Gamma$ by Theorem \ref{borel-serre-thm} and that we have $\dim(S^H)=\dim(X^H)$ for any finite subgroup $H$ of $\Gamma$. Moreover, the kernel of the action $\Gamma\actson X$ is exactly the center of $\Gamma$ so $S^H=S$ if and only if $H$ is central. This implies that, with the notation introduced above, $\CF$ is the family of finite subgroups of $\Gamma$ lying over the center and $X_{\mathrm{sing}}$ is the subspace of $X$ consisting of points that are fixed by a non-central finite order element of $\Gamma$.  We denote by 
$$\CS=\{X^H \ \vert\  H\in\CF\text{ non-central and } \nexists H'\in\CF\ \hbox{ non-central with}\ X^H\subsetneq X^{H'}\}$$ 
the set of fixed point sets of non central elements $H\in\CF$ which are maximal. Note that 
$$X_{\mathrm{sing}}=\bigcup_{X^H\in\CS}X^H$$
Also, every fixed point set of $\CS$ is actually of the form $X^A$, where $A$ is a non-central finite order group element of $\Gamma$. We are now ready to prove the first criterion ensuring that $\vcd(\Gamma)=\cdm(\Gamma)$.

\begin{prop}\label{prop-coho1}  
If 
\begin{itemize}
\item[(1)] $\dim(X^A)\leq \vcd(\Gamma)$ for every $X^A\in\CS$, and
\item[(2)] the homomorphism $\mathrm{H}_c^{\vcd(\Gamma)}(X)\to \mathrm{H}_c^{\vcd(\Gamma)}(X_{\sing})$ is surjective
\end{itemize}
then $\vcd(\Gamma)=  \cdm(\Gamma)$.
\end{prop}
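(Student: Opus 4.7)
The plan is to prove $\cdm(\Gamma)\le d:=\vcd(\Gamma)$, since the reverse inequality is automatic. By Theorem \ref{th: bredon compact support}, this amounts to showing that $H_c^n(X^K,X^K_{\sing})=0$ for every $K\in\CF$ and every $n>d$. I would split the argument into two cases according to whether $K$ is the kernel of the action (the "central" case) or not.

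For non-central $K$, one has $X^K\subseteq X_{\sing}$, and picking a maximal element $X^A\in\CS$ containing $X^K$, hypothesis (1) forces $\dim X^K\le\dim X^A\le d$. Since $X^K_{\sing}$ is closed in $X^K$ (being a subcomplex), the standard excision isomorphism for compactly supported cohomology yields
$$H_c^n(X^K,X^K_{\sing})\cong H_c^n(X^K\setminus X^K_{\sing}),$$
and the right-hand side vanishes for all $n>d$ by a pure dimension argument, since $X^K\setminus X^K_{\sing}$ is locally compact of dimension at most $d$.

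For central $K$, the pair $(X^K,X^K_{\sing})$ equals $(X,X_{\sing})$, and I would invoke the long exact sequence of the pair
$$H_c^{n-1}(X_{\sing})\to H_c^n(X,X_{\sing})\to H_c^n(X)\to H_c^n(X_{\sing}).$$
Since $X$ is a cocompact model for $\underline E\Gamma$, $H_c^n(X)=0$ for $n>d$; hypothesis (1) gives $\dim X_{\sing}\le d$, so $H_c^{n-1}(X_{\sing})=0$ once $n>d+1$. In all degrees $n>d+1$ both flanking terms vanish, so $H_c^n(X,X_{\sing})=0$. The borderline degree $n=d+1$ is the only place where hypothesis (2) is actually used: the sequence collapses to
$$H_c^d(X)\to H_c^d(X_{\sing})\to H_c^{d+1}(X,X_{\sing})\to 0,$$
and the hypothesized surjectivity of the first arrow forces the vanishing of the third.

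The main obstacle I foresee is not so much any individual step but rather orchestrating the two hypotheses appropriately: hypothesis (1) is the workhorse that bounds everything above the critical degree and cleanly handles non-central $K$ via excision, while hypothesis (2) is strictly reserved for the single borderline degree $n=d+1$ in the central case, where the dimension bound alone is not enough.
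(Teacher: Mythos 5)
Your argument is correct and coincides with the paper's proof: the long exact sequence of the pair $(X,X_{\sing})$ handles the central case, with hypothesis (2) used only at the critical degree $d+1$, while the non-central case follows from the dimension bound supplied by hypothesis (1) via Theorem \ref{th: bredon compact support}. (The excision detour in the non-central case is unnecessary --- for a CW-pair $(X^K,X^K_{\sing})$ with $\dim X^K\le d$ the relative compactly supported cochain complex already vanishes above degree $d$ --- but this is purely cosmetic.)
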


\begin{proof}
First recall that in general one has $\vcd(\Gamma)\leq \cdm(\Gamma)$. The long exact sequence for the pair $(X,X_{\mathrm{sing}})$ and the fact that $\mathrm{H}_c^n(X_{\sing})$ equals zero  for $n>\vcd(\Gamma)$ because of (1), imply that $\mathrm{H}_c^{n+1}(X,X_{\mathrm{sing}})=0$ for all $n>\vcd(\Gamma)$ and entails the exact sequence
$$\mathrm{H}_c^{\vcd(\Gamma)}(X)\to\mathrm{H}_c^{\vcd(\Gamma)}(X_{\sing}) \rightarrow  \mathrm{H}_c^{\vcd(\Gamma)+1}(X,X_{\mathrm{sing}})\rightarrow 0.$$
Since $\mathrm{H}_c^{\vcd(\Gamma)}(X)\to\mathrm{H}_c^{\vcd(\Gamma)}(X_{\sing})$ is surjective by assumption we get that 
$$\mathrm{H}_c^{n}(X,X_{\mathrm{sing}})=0$$
for all $n>\vcd(\Gamma)$. Since $\dim(X^H)\leq \vcd(\Gamma)$ for all non-central $H\in \CF$, it follows from Theorem \ref{th: bredon compact support} that $\cdm(\Gamma)\le\vcd(\Gamma)$, proving the proposition.
\end{proof}
Since $\dim(S^H)=\dim(X^H)$ for any finite subgroup $H$ of $\Gamma$,  the following corollary is immediate.

\begin{kor}\label{crit1}
If $\dim S^A < \vcd(\Gamma)$ for every non-central finite order  element $A \in \Gamma$, then $\vcd(\Gamma)=  \cdm(\Gamma)$.\qed
\end{kor}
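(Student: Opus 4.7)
The plan is to reduce to Proposition \ref{prop-coho1} via the dimension equality $\dim S^H = \dim X^H$ that was recalled in the paragraph preceding it. Given the hypothesis that every non-central finite-order element $A\in\Gamma$ satisfies $\dim S^A<\vcd(\Gamma)$, I need to verify conditions (1) and (2) of Proposition \ref{prop-coho1} for the Borel-Serre bordification $X$.

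For condition (1), recall from the discussion introducing $\CS$ that every element of $\CS$ is of the form $X^A$ for some non-central finite-order $A\in\Gamma$. Then
\[
\dim X^A \;=\; \dim S^A \;<\; \vcd(\Gamma),
\]
so condition (1) holds, and with strict inequality.

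For condition (2), the strict inequality just established shows that the $\Gamma$-CW subcomplex
\[
X_{\sing} \;=\; \bigcup_{X^A\in\CS} X^A
\]
has CW-dimension strictly less than $\vcd(\Gamma)$, since its dimension is the supremum of the dimensions of the subcomplexes appearing in the union and each of these is a (fixed) integer strictly below $\vcd(\Gamma)$. Consequently $\mathrm{H}_c^{\vcd(\Gamma)}(X_{\sing})=0$, so the restriction map $\mathrm{H}_c^{\vcd(\Gamma)}(X)\to \mathrm{H}_c^{\vcd(\Gamma)}(X_{\sing})$ lands in the zero group and is therefore trivially surjective. Both hypotheses of Proposition \ref{prop-coho1} are satisfied, yielding $\vcd(\Gamma)=\cdm(\Gamma)$. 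There is no real obstacle to the argument: it is purely the combination of the fixed-point dimension equality with the vanishing of compactly supported cohomology above the CW-dimension, which is why the authors describe the corollary as immediate.
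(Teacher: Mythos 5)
Your proof is correct and takes the same route the paper intends: the authors simply declare the corollary immediate from $\dim S^H=\dim X^H$, and the content you supply (strict inequality forces $\dim X_{\sing}<\vcd(\Gamma)$, hence $\mathrm{H}_c^{\vcd(\Gamma)}(X_{\sing})=0$, so both hypotheses of Proposition \ref{prop-coho1} hold) is exactly what is being suppressed.
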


The simple criterion in Corollary \ref{crit1} is actually going to deal with a lot of the cases in the main result, but not with all. In certain situations we will have $\dim X^A = \vcd(\Gamma)$ for certain $X^A \in \CS$, in which case we will have to prove that condition (2) of Proposition \ref{prop-coho1} is satisfied. The following proposition will help us do that.

\begin{prop}\label{mayer vietoris} Fix an integer $d\geq 2$. If 
$$\dim(X^A\cap X^B)\le d-2$$ 
for any two distinct $X^A, X^B\in\CS$, then the homomorphism
$$\mathrm{H}_c^d(X)\to \mathrm{H}_c^d(X_{\sing})$$
is surjective if for any finite subset $\{X^{A_1},\ldots,X^{A_n}\}$ of $\mathcal{S}$, the homomorphism
$$\mathrm{H}_c^d(X)\to \mathrm{H}_c^d(X^{A_1})\oplus\dots\oplus \mathrm{H}_c^d(X^{A_n})$$
is surjective.

\end{prop}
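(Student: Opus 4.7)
The plan is to reduce the statement to the finite case via Mayer--Vietoris and then use an identification of $\mathrm{H}_c^d(X_{\sing})$ as an infinite direct sum.

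First, I would apply the closed-cover Mayer--Vietoris sequence for compactly supported cohomology inductively. For any finite subset $F = \{X^{A_1}, \ldots, X^{A_n}\} \subset \CS$, write $Y_F := X^{A_1} \cup \cdots \cup X^{A_n}$. The dimension hypothesis $\dim(X^A\cap X^B) \le d-2$ implies $\mathrm{H}_c^{d-1}(X^A\cap X^B) = \mathrm{H}_c^{d}(X^A\cap X^B) = 0$ for distinct $X^A,X^B \in \CS$. Since $Y_{F\setminus\{X^{A_n}\}} \cap X^{A_n}$ is a finite union of such pairwise intersections, it too has dimension at most $d-2$. The Mayer--Vietoris connecting maps in degree $d$ therefore vanish, yielding by induction
\[
\mathrm{H}_c^d(Y_F) \cong \bigoplus_{i=1}^n \mathrm{H}_c^d(X^{A_i}).
\]

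Next, I would identify $\mathrm{H}_c^d(X_{\sing}) \cong \bigoplus_{X^A \in \CS} \mathrm{H}_c^d(X^A)$. The cover of $X_{\sing}$ by elements of $\CS$ is locally finite thanks to cocompactness of the $\Gamma$-action on $X$. Moreover, since pairwise intersections have dimension at most $d-2$, every $d$- or $(d-1)$-cell of $X_{\sing}$ lies in a unique element of $\CS$, so extension-by-zero from $Y_F$ to $X_{\sing}$ defines an honest cochain map in these degrees. Passing to the direct limit over finite $F$ then gives the desired identification.

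Given $\alpha \in \mathrm{H}_c^d(X_{\sing})$, its representative cocycle has compact support meeting only finitely many $X^A$, yielding a finite $F \subset \CS$ such that $\alpha$ corresponds to a tuple $(\alpha_A)_{A \in F}$ under the identification. By the hypothesis of the proposition, there is $\beta \in \mathrm{H}_c^d(X)$ with $\beta|_{X^A} = \alpha_A$ for each $A \in F$, so that $\beta|_{X_{\sing}}$ matches $\alpha$ on the $F$-components; however, $\beta|_{X^A}$ may be nonzero on a finite set $F'' \subset \CS \setminus F$ (again finite by compact support of $\beta$ and local finiteness of the cover).

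The main obstacle is correcting these excess components. Applying the hypothesis to the enlarged finite set $F \cup F''$ yields $\beta' \in \mathrm{H}_c^d(X)$ with $\beta'|_{X^A} = 0$ for $A \in F$ and $\beta'|_{X^A}$ matching the excess for $A \in F''$; then $\beta - \beta'$ has correct restrictions on $F \cup F''$ but possibly new excess on a further finite set. Iterating this correction while controlling supports via local finiteness and cocompactness, one produces in the limit a lift with image $\alpha$ in $\mathrm{H}_c^d(X_{\sing})$. Making this limiting procedure rigorous is the key technical challenge.
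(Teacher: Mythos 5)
Your argument parallels the paper's up to the final reduction, but with a different mechanism: the paper does not use a Mayer--Vietoris induction. Instead it introduces the subcomplex $Z=\bigcup_{X^A\neq X^B\in\CS}(X^A\cap X^B)$ and applies the cochain-level isomorphism of Lemma \ref{covering} to get $\CC^*_c(X_{\sing},Z)\cong\bigoplus_{X^A\in\CS}\CC^*_c(X^A,Z\cap X^A)$ in one stroke; the dimension hypothesis $\dim Z\le d-2$ is then fed through the long exact sequences of the pairs $(X_{\sing},Z)$ and $(X^A,Z\cap X^A)$ to convert relative to absolute groups in degree $d$. This is cleaner than your inductive Mayer--Vietoris argument, which has to re-verify the degree-shift vanishing at every stage and worry separately about local finiteness when passing to the limit, but the two routes land in the same place: an isomorphism $\mathrm{H}^d_c(X_{\sing})\cong\bigoplus_{X^A\in\CS}\mathrm{H}^d_c(X^A)$ under which the map from $\mathrm{H}_c^d(X)$ is restriction in each coordinate.

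The last step is where your proposal is genuinely incomplete, and you are right to flag it. Once the isomorphism is in place, the remaining claim is: if a homomorphism $\psi:M\to\bigoplus_{i\in I}N_i$ has surjective projection $\pi_F\circ\psi$ for every finite $F\subset I$, then $\psi$ is surjective. As an abstract statement about abelian groups this is \emph{false}: take $M=\bigoplus_{j\in\BN}\BZ$, $N_i=\BZ$, and $\psi(e_j)=e_j+e_{j+1}$; each finite projection is surjective (upper-triangular unipotent), yet $e_1$ is not in the image. So the "correct the excess components and iterate" procedure you sketch does not converge in general, and no soft compactness/local-finiteness argument can rescue it without more input. The paper's own one-line conclusion ("which implies the proposition") is equally terse on this point, so your concern is not a misreading. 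In the application (Corollary \ref{cor: cohom flat}) the relevant classes come from rational flats, whose intersection numbers with each $X^A$ can be controlled, and one should verify directly that the lift hitting $e_{A_1}$ can be chosen to have vanishing restriction to the finitely many other $X^B$ it meets; this extra geometric input is what makes the reduction legitimate, and it is missing from your proposal (and unstated in the paper). As written, your proof is not complete.
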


Proposition \ref{mayer vietoris} will follow easily from the following observation.

\begin{lem}\label{covering} Let $Y$ be a $CW$-complex with a covering $Y=\cup_{\alpha\in\Lambda}Y_\alpha$ by subcomplexes. Let $Z\subseteq Y$ be a subcomplex and assume that whenever $\alpha\neq\beta\in\Lambda$, $Y_\alpha\cap Y_\beta\subseteq Z$. Then there is an isomorphism of compactly supported cochain complexes
$$\mathcal C^*_c(Y,Z)\buildrel{\cong}\over\to\bigoplus_{\alpha\in\Lambda}\mathcal C^*_c(Y_\alpha,Z\cap Y_\alpha).$$
\end{lem}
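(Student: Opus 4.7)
The plan is to realize $\mathcal{C}^*_c(Y,Z)$ as \emph{cellular} compactly supported cochains and exhibit the isomorphism cell by cell. The crucial combinatorial fact is that the hypothesis forces each cell of $Y$ not in $Z$ to lie in exactly one of the covering subcomplexes.

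First I would recall that for a CW-pair $(W,V)$, the relative cellular cochain group $\mathcal{C}^n(W,V)$ is naturally the group of $\BZ$-valued functions on the set $\Sigma_n(W)\setminus\Sigma_n(V)$ of $n$-cells of $W$ not belonging to $V$, and that the compactly supported version $\mathcal{C}^n_c(W,V)$ is the subgroup of finitely supported such functions. Then I would verify the following combinatorial claim: every cell $\sigma$ of $Y$ lies in \emph{some} $Y_\alpha$ (because the $Y_\alpha$ cover $Y$ and are subcomplexes, so the open cell $\sigma$ lies in whichever $Y_\alpha$ contains a point of $\sigma$); and if moreover $\sigma\notin Z$, then $\sigma$ lies in a \emph{unique} such $Y_\alpha$, because any second choice $\beta\neq\alpha$ would give $\sigma\subseteq Y_\alpha\cap Y_\beta\subseteq Z$. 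Consequently one obtains a bijection
\[
\Sigma_n(Y)\setminus\Sigma_n(Z)\;\cong\;\bigsqcup_{\alpha\in\Lambda}\bigl(\Sigma_n(Y_\alpha)\setminus\Sigma_n(Z\cap Y_\alpha)\bigr).
\]

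Second, I would use this bijection to define the map $\Phi:\mathcal{C}^n_c(Y,Z)\to\bigoplus_{\alpha}\mathcal{C}^n_c(Y_\alpha,Z\cap Y_\alpha)$ by sending a finitely supported function $f$ to the tuple whose $\alpha$-component is the restriction of $f$ to those cells of $Y\setminus Z$ lying in $Y_\alpha$. Finite support of $f$ immediately forces all but finitely many components to vanish, so $\Phi(f)$ does land in the direct sum. The inverse $\Psi$ glues a finitely-supported tuple $(g_\alpha)$ to the function whose value on a cell $\sigma\in Y_\alpha\setminus Z$ is $g_\alpha(\sigma)$; this is well-defined precisely because of the uniqueness of the containing $Y_\alpha$. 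Clearly $\Phi$ and $\Psi$ are mutually inverse isomorphisms of abelian groups in each degree.

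Third, I would check that $\Phi$ commutes with the cellular coboundary. For an $(n{+}1)$-cell $\sigma\in Y_\alpha\setminus Z$, the coboundary formula $(\delta f)(\sigma)=\sum_\tau [\sigma:\tau]\,f(\tau)$ only involves $n$-cells $\tau$ in the closure of $\sigma$; since $Y_\alpha$ is a subcomplex, each such $\tau$ lies in $Y_\alpha$, and the sum over $\tau\in Z$ gives zero in the relative complex. Hence the coboundary of $\Phi(f)$ in the $\alpha$-summand coincides with $(\delta f)|_{Y_\alpha}$, which is exactly $\Phi(\delta f)_\alpha$.

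The lemma is essentially a bookkeeping statement, so there is no substantial obstacle; the only point to watch is the distinction between cells contained in $Z$ (which are quotiented out in the relative complex) and cells outside $Z$ (to which the uniqueness-of-$\alpha$ property applies), and the fact that subcomplexes are closed under taking faces, which is exactly what makes the coboundary respect the decomposition.
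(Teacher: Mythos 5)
Your proof is correct and follows essentially the same approach as the paper: both rely on the observation that each cell of $Y$ not in $Z$ lies in a unique $Y_\alpha$ and that the restriction maps assemble into a finitely-supported tuple, with an explicit inverse defined cell by cell. The only difference is one of exposition—you spell out the compatibility with the cellular coboundary, which the paper leaves to the reader.
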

\begin{proof} As each $Y_\alpha$ is a subcomplex of $Y$ we have restriction maps \[\mathcal C^*_c(Y,Z)\to\mathcal C^*_c(Y_\alpha,Z\cap Y_\alpha).\] For each $f\in\mathcal C^*_c(Y,Z)$, any $\sigma$ in the support of $f$ belongs to just one of the sets $Y_\alpha$ and since we are working with compactly supported cochains, we deduce that $f$ vanishes when restricted to almost all $Y_\alpha$'s, so we get \[\phi:\mathcal C^*_c(Y,Z)\to\bigoplus_{\alpha\in\Lambda}\mathcal C^*_c(Y_\alpha,Z\cap Y_\alpha).\]  We will construct an inverse to this map.
Let
$$\oplus_{\alpha\in\Lambda}f_\alpha\in\bigoplus_{\alpha\in\Lambda} C^*_c(Y_\alpha,Z\cap Y_\alpha)$$ be a sum of compactly supported cochain maps and define
$$\psi(\oplus_{\alpha\in\Lambda}f_\alpha)(\sigma)=\Bigg\{\begin{aligned}
&f_\alpha(\sigma),\text{ if there is a unique $\alpha\in\Lambda$ so that }\sigma\in Y_\alpha\\
&0\text{ \ \ otherwise}.\end{aligned}$$

It is not difficult to verify that this gives a well-defined chain map $\psi$, and that $\psi$ and $\phi$ are inverses of each other.
\end{proof}

\begin{proof}[Proof of Proposition \ref{mayer vietoris}]
Letting  \[Z=\bigcup_{X^A\neq X^B \in \mathcal{S}}(X^A\cap X^B),\] the chain isomorphism of Lemma \ref{covering} implies that
$$\mathrm{H}^*_c(X_{\sing},Z)=\bigoplus_{X^A\in\CS}\mathrm{H}^*_c(X^A,Z\cap X^A).$$
Now, the long exact sequence  
$$\ldots\to \mathrm{H}_c^{d-1}(Z)\to \mathrm{H}_c^d(X_{\sing},Z)\to \mathrm{H}_c^d(X_{\sing})\to \mathrm{H}_c^{d}(Z)\to\ldots$$ of the pair $(X_{\sing},Z)$, together with the fact that $\dim Z\le d-2$, imply that $\mathrm{H}_c^d(X_{\sing},Z)= \mathrm{H}^d_c(X_{\sing})$. By exactly the same reason we have $\mathrm{H}_c^d(X^A,Z\cap X^A)= \mathrm{H}^d_c(X^{A})$ for any $X^A\in\CS$. We conclude that the inclusions $X^A \rightarrow X$, for $X^A \in \mathcal{S}$, induce an isomorphism
 $$\mathrm{H}^d_c(X_{\sing})\xrightarrow{\cong }\bigoplus_{X^A\in\CS}\mathrm{H}^d_c(X^A)$$
which implies the proposition.
\end{proof}

In almost all cases of interest where Corollary \ref{crit1} does not apply, we will be able to use the following corollary.

\begin{kor} \label{cor: cohom flat}
With the same notation as above, suppose that
\begin{enumerate}
\item $\dim(S^A)\leq \vcd(\Gamma)$ for every non-central finite order element $A\in\Gamma$,
\item $\dim(S^A\cap S^B)\le \vcd(\Gamma)-2$ for any two distinct  $S^A, S^B\in\mathcal{S}$, and 
\item for any finite set of non-central finite order elements $A_1,\dots,A_r\in\Gamma$ with $S^{A_i}\neq S^{A_j}$ for $i\neq j$,  $\dim(S^{A_i})=\vcd(\Gamma)$, and such that $C_{\Gamma}(A_i)$ is a cocompact lattice in $C_G(A_i)$, there exists a rational flat $F$ in $S$ that intersects $S^{A_1}$ in exactly one point and is disjoint from $S^{A_i}$ for $i \in \{2,\ldots,r\}$.
\end{enumerate}
Then $\vcd(\Gamma)=  \cdm(\Gamma)$.
\end{kor}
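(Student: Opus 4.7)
The plan is to apply Proposition~\ref{prop-coho1} with $d=\vcd(\Gamma)$. Condition (1) of the corollary, together with the equality $\dim X^A=\dim S^A$, immediately delivers hypothesis (1) of the proposition. To establish hypothesis (2), namely surjectivity of $\mathrm{H}_c^d(X)\to \mathrm{H}_c^d(X_{\sing})$, I would invoke Proposition~\ref{mayer vietoris}; its intersection hypothesis is supplied verbatim by condition (2) of the corollary. We are thus reduced to showing that for every finite family $\{X^{A_1},\dots,X^{A_n}\}\subseteq\mathcal{S}$, the restriction
$$\mathrm{H}_c^d(X)\longrightarrow \bigoplus_{i=1}^n \mathrm{H}_c^d(X^{A_i})$$
is surjective.

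Next, I would discard all $A_i$ with $\mathrm{H}_c^d(X^{A_i})=0$, as these contribute trivially. For any surviving $A_i$ one has $\dim X^{A_i}=d$ and $\vcd(C_\Gamma(A_i))=\dim S^{A_i}$; applying Theorem~\ref{borel-serre-vcd2} to the centralizer forces the $\BQ$-rank of its ambient algebraic group to be zero, so $C_\Gamma(A_i)$ is a cocompact lattice in $C_G(A_i)$, $X^{A_i}=S^{A_i}\cong \BR^d$, and $\mathrm{H}_c^d(X^{A_i})\cong\BZ$. Hence condition (3) applies to any reordering of $\{A_1,\dots,A_n\}$ and produces, for each $i$, a maximal rational flat $F_i$ meeting $S^{A_i}$ in a single point and disjoint from $S^{A_j}$ for $j\neq i$. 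Since $\dim F_i+\dim S^{A_i}=\rank_\BQ(\BG)+d=\dim S$ and both submanifolds are totally geodesic, the intersection $F_i\cap S^{A_i}$ is automatically transverse: otherwise a non-trivial vector in $T_pF_i\cap T_pS^{A_i}$ would exponentiate to a positive-dimensional component of the intersection.

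I would then use Proposition~\ref{prop: flat cohom class} together with Poincar\'e-Lefschetz duality to package each $F_i$ as a compactly supported cohomology class $\tau_i\in \mathrm{H}_c^d(X)$ (a Thom-type class of the normal bundle of the closure of $F_i$ in $X$). Under duality, restriction to the submanifold $X^{A_j}$ is Poincar\'e-dual to transverse intersection with $S^{A_j}$, so $\tau_i|_{X^{A_j}}$ is a generator of $\mathrm{H}_c^d(X^{A_j})\cong\BZ$ when $j=i$ and zero otherwise. Consequently $\tau_1,\dots,\tau_n$ surject onto the standard basis of $\bigoplus_i \mathrm{H}_c^d(X^{A_i})$, yielding surjectivity and completing the application of Proposition~\ref{prop-coho1}.

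The main obstacle is executing the duality-and-intersection package rigorously on the Borel-Serre bordification $X$, which is a manifold with corners rather than a manifold with boundary. One must verify that the closure of the rational flat $F_i$ sits inside $(X,\partial X)$ as a relative submanifold (this is where Proposition~\ref{prop: flat cohom class} and the fact that $C_\Gamma(A_i)$ is cocompact are essential), that Poincar\'e-Lefschetz duality is natural with respect to restriction to the subbordifications $X^{A_j}$, and that this restriction genuinely computes the geometric intersection number of $F_i$ with $S^{A_j}$ taken inside the interior $S$, where the transversality established above actually takes place.
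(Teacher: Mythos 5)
Your proposal is correct and follows essentially the same route as the paper: reduce to surjectivity onto $\bigoplus_i \mathrm{H}_c^d(X^{A_i})$ via Propositions \ref{prop-coho1} and \ref{mayer vietoris}, note that any $A_i$ contributing a nonzero summand must have $\dim X^{A_i}=d$ and $C_\Gamma(A_i)$ cocompact (so $X^{A_i}=S^{A_i}\cong\BR^d$), and then use the rational flat from condition (3) together with Proposition \ref{prop: flat cohom class} and Poincar\'e--Lefschetz duality to hit each standard basis vector of $\BZ^r$. The paper states condition (3) with ``intersects $S^{A_1}$ in exactly one point'' and then implicitly uses transversality in the final duality computation; your observation that transversality is automatic for totally geodesic submanifolds of complementary dimension meeting in a single point is a small but genuine clarification of that step, and your closing caveats about the manifold-with-corners structure of $X$ are the same details the paper leaves to ``one checks.''
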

\begin{proof} By Propositions \ref{prop-coho1} and \ref{mayer vietoris}, it suffices to show that 
for any finite set of pairwise distinct elements $X^{A_1},\dots,X^{A_r}\in \mathcal{S}$, the map
$$\mathrm{H}_c^{\vcd(\Gamma)}(X)\to \mathrm{H}_c^{\vcd(\Gamma)}(X^{A_1})\oplus\dots\oplus \mathrm{H}_c^{\vcd(\Gamma)}(X^{A_r})$$
is surjective. Since $\vcd(C_{\Gamma}(A_i))=\dim(S^{A_i})-\rank_\BQ(C_\Gamma(A_i))\leq \vcd(\Gamma)$ and $\dim(S^{A_i})\leq \vcd(\Gamma)$, the fact that  $\mathrm{H}_c^{\vcd(\Gamma)}(X^{A_1})\neq 0$ implies that  $\dim(X^{A_i})=\vcd(\Gamma)$  and $\rank_\BQ(C_\Gamma(A_i))=0$; in particular, $C_{\Gamma}(A_i)$ is a cocompact lattice in $C_G(A_i)$. Therefore it suffices to show that for any finite set of pairwise distinct elements $X^{A_1},\dots,X^{A_r}\in \mathcal{S}$ such that $\dim(X^{A_i})=\vcd(\Gamma)$ and $C_{\Gamma}(A_i)$ is a cocompact lattice in $C_G(A_i)$,  the map
\begin{equation}\label{eq: map}
\mathrm{H}_c^{\vcd(\Gamma)}(X)\to \mathrm{H}_c^{\vcd(\Gamma)}(X^{A_1})\oplus\dots\oplus \mathrm{H}_c^{\vcd(\Gamma)}(X^{A_r})
\end{equation}
is surjective. Let $X^{A_1},\dots,X^{A_r}\in \CS$ be such a collection of elements and note that $\mathrm{H}_c^{\vcd(\Gamma)}(X^{A_i})\cong \mathbb{Z}$ for every $i \in \{1,\ldots,r\}$ by Poincar\'{e} duality. So it is enough to show that 
$$(1,0,\ldots,0) \in  \mathrm{H}_c^{\vcd(\Gamma)}(X^{A_1})\oplus\dots\oplus \mathrm{H}_c^{\vcd(\Gamma)}(X^{A_r})$$
lies in the image of the map (\ref{eq: map}). To this end, let $F$ be a rational flat in $X$ that intersects $X^{A_1}$ in exactly one point and is disjoint from $X^{A_i}$ for $i \in \{2,\ldots,r\}$. Letting $q$ equal the $\mathbb{Q}$-rank of $\Gamma$  (i.e.~$\rank_\BQ(\BG)$), one checks using Proposition \ref{prop: flat cohom class} and  Poincar\'{e}-Lefschetz duality that
\begin{align*}
\mathbb{Z} &\cong\mathrm{H}_{q}(F,\partial F) \rightarrow \mathrm{H}_{q}(X,\partial X) \cong \mathrm{H}_c^{\vcd(\Gamma)}(X) \rightarrow \\
& \rightarrow \mathrm{H}_c^{\vcd(\Gamma)}(X^{A_1})\oplus\dots\oplus \mathrm{H}_c^{\vcd(\Gamma)}(X^{A_r})\cong\BZ^r 
\end{align*}
sends $1$ to $(1,0,\ldots,0)$, finishing the proof.
\end{proof}

Corollary \ref{crit1} and Corollary \ref{cor: cohom flat} will suffice to prove that $\cdm(\Gamma)=\vcd(\Gamma)$ for every lattice $\Gamma$ in one of the Lie groups $G$ considered in the main theorem other than $G=\SL_3\BR$. The proof in this particular case will rely on the following proposition.

\begin{prop}\label{gen} 
With the same notation as above, assume that for every finite subgroup $H$ of $\Gamma$ that properly contains the center of $\Gamma$, one has
$$\mathrm{H}_c^n(X^H,X^H_{\sing})=0$$
for every $n\geq \mathrm{vcd}(\Gamma).$ Then $\vcd(\Gamma)=\cdm(\Gamma)$.
\end{prop}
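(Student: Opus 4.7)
My plan is to use the characterization $\cdm(\Gamma)=\max\{n:\exists K\in\CF,\ \mathrm{H}_c^n(X^K,X^K_{\sing})\neq 0\}$ from Theorem~\ref{th: bredon compact support}. Since $\vcd(\Gamma)\le\cdm(\Gamma)$ always holds, what needs to be checked is that $\mathrm{H}_c^n(X^K,X^K_{\sing})=0$ for every $K\in\CF$ and every $n>\vcd(\Gamma)$. For $K$ properly containing the kernel $Z$ of the action this is immediate from the hypothesis. For $K=Z$, one has $X^K=X$ and $X^K_{\sing}=X_{\sing}$, and the long exact sequence of the pair together with $\mathrm{H}_c^n(X)=0$ for $n>\vcd(\Gamma)$ (which holds because $X$ is a cocompact model for $\underline{E}\Gamma$) reduces the required vanishing to the key assertion
$$\mathrm{H}_c^m(X_{\sing})=0\qquad\text{for every }m\ge\vcd(\Gamma).$$

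To establish this, I would filter $X_{\sing}$ by stabilizer order: for each integer $k\ge|Z|+1$ set
$$Y_k=\{x\in X:|\Gamma_x|\ge k\}=\bigcup_{H\in\CF,\,|H|\ge k}X^H,$$
a decreasing family of $\Gamma$-invariant subcomplexes with $Y_{|Z|+1}=X_{\sing}$, which stabilizes at $\emptyset$ for $k>k_{\max}:=\max\{|H|:H\in\CF\}$ (finite by cocompactness of $\Gamma\actson X$). A downward induction on $k$ will show $\mathrm{H}_c^m(Y_k)=0$ for every $m\ge\vcd(\Gamma)$. In the base case $k=k_{\max}$, each $H\in\CF$ with $|H|=k_{\max}$ is non-central and maximal in $\CF$, so $X^H_{\sing}=\emptyset$, and distinct such $X^H$'s are pairwise disjoint (any two would generate a strictly larger finite subgroup); hence $Y_{k_{\max}}$ is a disjoint union of $X^H$'s on each of which the hypothesis gives $\mathrm{H}_c^m(X^H)=\mathrm{H}_c^m(X^H,\emptyset)=0$ for $m\ge\vcd(\Gamma)$.

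For the inductive step, set $B=\bigcup_{|H|=k}X^H$, so that $Y_k=Y_{k+1}\cup B$ and $B\cap Y_{k+1}=\bigcup_{|H|=k}X^H_{\sing}$, and excision gives $\mathrm{H}_c^n(Y_k,Y_{k+1})\cong\mathrm{H}_c^n(B,B\cap Y_{k+1})$. For distinct $H,H'$ of order $k$, the subgroup $\langle H,H'\rangle$ has strictly larger order, so $X^H\cap X^{H'}=X^{\langle H,H'\rangle}\subseteq X^H_{\sing}$; this is precisely the hypothesis of Lemma~\ref{covering}, yielding
$$\mathrm{H}_c^n(Y_k,Y_{k+1})\;\cong\;\bigoplus_{|H|=k}\mathrm{H}_c^n(X^H,X^H_{\sing}).$$
Since every $H$ with $|H|=k>|Z|$ properly contains the center, the hypothesis kills each summand for $n\ge\vcd(\Gamma)$. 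Combined with the inductive hypothesis $\mathrm{H}_c^m(Y_{k+1})=0$, the long exact sequence of $(Y_k,Y_{k+1})$ then forces $\mathrm{H}_c^m(Y_k)=0$ for $m\ge\vcd(\Gamma)$, closing the induction and finishing the proof. The only potential technical difficulty is the direct-sum excision step for what is in principle an infinite cover, but this is exactly the content of Lemma~\ref{covering} and its hypothesis is verified by the elementary observation that two distinct subgroups of the same order always generate a strictly larger subgroup.
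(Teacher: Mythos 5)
Your argument is correct and is essentially the paper's proof: both filter $X_{\sing}$ by a monotone invariant of finite subgroups, apply Lemma~\ref{covering} at each level to split the relative cochain complex into summands $\mathrm{H}^*_c(X^H,X^H_{\sing})$, and run a downward induction via long exact sequences. The only (cosmetic) differences are that you order subgroups by cardinality where the paper uses the length of a chain above $Z(\Gamma)$, and you deduce the vanishing of the absolute groups $\mathrm{H}^m_c(Y_k)$ and feed that back into the pair $(X,X_{\sing})$, whereas the paper inductively proves $\mathrm{H}^{n+1}_c(X,X_i)=0$ directly via the long exact sequence of a triple — the two bookkeeping choices are interchangeable.
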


\begin{proof} 
Recall that $\CF$ is the collection of finite subgroups of $\Gamma$ that contain the center of $\Gamma$. Define the length $l(H)$ of a finite subgroup $H$ of $\Gamma$ to be the largest integer $m>0$ such that there is a strictly ascending chain of subgroups
$$Z(\Gamma)=H_0\subsetneq H_1\subsetneq\ldots\subsetneq H_m=H.$$
where $Z(\Gamma)$ is the center of $\Gamma$. Note that since $\Gamma$ is virtually torsion-free, there is a uniform bound on the length of finite subgroups of $\Gamma$. Hence, the number
\[  l=\max\{ l(H) \ | \ H \in \CF\}  \]
is finite. We use the notion of length to filter $X_{\sing}$ as follows. For each $i \in \{0,\ldots,l\}$, define

$$X_i=\bigcup_{\substack{H \in \CF s.t.\\ l(H)=i} }X^H.$$

Note that since $X^L\subseteq X^H$ whenever $H$ is a subgroup of  $L$, we have $X_1=X_{\sing}$ and 
$$X_l\subseteq X_{l-1}\subseteq\ldots\subseteq X_{i+1}\subseteq X_i\subseteq\ldots\subseteq X_1\subseteq X_0=X.$$
If $H_1$ and $H_2$ are two distinct finite subgroups of $\Gamma$ of maximal length $l$, then $X^{H_1}\cap X^{H_2}=X^{\langle H_1,H_2\rangle}=\emptyset$, since $\langle H_1,H_2\rangle$ must be an infinite group. Therefore $X_l$ is a disjoint union of spaces of the form $X^H$ with $l(H)=l$. Since $X^H_{\mathrm{sing}}$ is empty when $l(H)=l$ we obtain
\begin{equation}\label{maxlength}H^n_c(X_l)=\bigoplus_{l(H)=l}H^n_c(X^H)=\bigoplus_{l(H)=l}H^n_c(X^H,X^H_{\mathrm{sing}})=0.\end{equation}
for every $n\geq \vcd(\Gamma)$. Since for any $i\in \{1,\ldots,l\}$ and each finite subgroup $H \in \CF$ of length $l(H)=i-1$
$$X^H\cap X_i=X^H_{\sing},$$ 
Lemma \ref{covering} implies that
\begin{equation}\label{split}H^*_c(X_{i-1},X_i)=\bigoplus_{l(H)=i-1}H^*_c(X^H,X^H_{\sing}).\end{equation}
We claim that for any $i\in \{1,\ldots,l\}$ and $n\geq\vcd\Gamma$, we have $H^{n+1}_c(X,X_{i})=0$. To prove this claim, we argue by induction starting at $X_l$ and going down. The long exact sequence of the pair $(X_l,X)$ together with $(\ref{maxlength})$ implies that $H^{n+1}_c(X,X_l)=0$.  Proceeding inductively, assume that $H^{n+1}(X,X_i)=0$. Now the long exact sequence of the triple $(X,X_{i-1},X_i)$ together with  (\ref{split})  implies that $H^{n+1}_c(X,X_{i-1})=0$, proving the claim. Since $X_1=X_{\mathrm{sing}}$ we have $H^{n+1}_c(X,X_{\mathrm{sing}})=0$ for all $n\geq \vcd(\Gamma)$. The proposition now follows from Theorem \ref{th: bredon compact support}.
\end{proof}

%
%
%
%

\section{Classical Lie groups}\label{sec: classical groups}
In this section we recall the definitions of the classical Lie groups and discuss a few well-known facts about their maximal compact subgroups and ranks. There are many references for this material, see for example \cite{Knapp}.

We begin by introducing some notation which we will keep using throughout the whole paper. We denote the transpose of a matrix $A$ by $A^t$. If $A$ is a complex (resp. quaternionic) matrix, we denote by $A^*$ its conjugate transpose. Accordingly, we sometimes denote the transpose $A^t$ of a real matrix $A$ by $A^*$. Now consider the block matrices
$$\Id_n=
\left(
\begin{array}{ccc}
1  &   &   \\
  &  \ddots &   \\
  &   &  1
\end{array}
\right),\ \ \ 
J_n=\left(
\begin{array}{cc}
 & \Id_n \\
-\Id_n & 
\end{array}
\right),\ \ 
Q_{p,q}=
\left(
\begin{array}{cc}
-\Id_p &  \\
 & \Id_q
\end{array}
\right)$$
where the empty blocks are zero. If the dimensions are understood, then we will drop them from our notations. 
The following groups are known as the classical (non-compact) simple Lie groups
\begin{align*}
&\SL(n,\BC)=\{A\in\GL(n,\BC)\vert \det A=1\} & & n\ge 2 \\
& \SO(n,\BC)=\{A\in\SL(n,\BC) \vert A^tA=\Id\} & & n\geq 3,\ n\neq 4 \\
&\Sp(2n,\BC)=\{A\in\SL(2n,\BC) \vert A^tJ_nA=J_n\} & & n\ge 1 \\
&\SL(n,\BR)=\{A\in\GL(n,\BR)\vert \det A=1\} & & n\ge 2 \\
&\SL(n,\BH)=\{A\in\GL(n,\BH)\vert \det A=1\} & & n\ge 2 \\
&\SO(p,q)=\{A\in\SL(p+q,\BR)\vert A^*Q_{p,q}A=Q_{p,q}\}& & 1\le p\le q,\ p+q\ge 3 \\
&\SU(p,q)=\{A\in\SL(p+q,\BC)\vert A^*Q_{p,q}A=Q_{p,q}\}& & 1\le p\le q,\ p+q\ge 3 \\
&\Sp(p,q)=\{A\in\GL(p+q,\BH)\vert A^*Q_{p,q}A=Q_{p,q}\}& & 1\le p\le q,\ p+q\ge 3 \\
&\Sp(2n,\BR)=\{A\in\SL(2n,\BR)\vert A^tJ_nA=J_n\} & & n\ge 1 \\
&\SO^*(2n)=\{A\in\SU(n,n)\vert A^tQ_{n,n}J_nA=Q_{n,n}J_n\} & & n\ge 2
\end{align*}

If $k=\BR,\BC$ and $G\subset\GL(n,k)$ then we let \[SG=\{A\in G\vert \det(A)=1\}\] be the set of elements with unit determinant. However, it is important to know what `$\det$' means because the real determinant of a matrix $A\in M_n(\BC)\subset M_{2n}(\BR)$ is the square of the norm of the complex determinant. For instance, every unitary matrix has determinant 1 when considered as a matrix with real coefficients: $\UU_n\subset\SL(2n,\BR)$. The reader might also wonder what is meant by $\SL(n,\BH)$ because a quaternionic endomorphism has no canonical determinant with values in $\BH$. Whenever we write $\det A$ for $A\in M_n(\BH)$ we consider the complex determinant of the image of $A$ under the embedding $M_n(\BH)\subset M_{2n}(\BC)$ obtained through the identification of $\BH$ with the following subalgebra of $M_2(\BC)$
$$\BH=\left\{
\left(
\begin{array}{cc}
z_1  & -\bar z_2  \\
z_2  & \bar z_1  
\end{array}
\right)\middle\vert z_1,z_2\in\BC\right\}.$$
When doing so, we get an identification between $\SL(n,\BH)$ and the group 
$$\SU^*(2n)=\left\{
\left(
\begin{array}{cc}
Z_1  & -\bar Z_2  \\
Z_2  & \bar Z_1  
\end{array}
\right)\in\SL(2n,\BC)\right\}.$$
Besides the classical (non-compact) Lie groups listed above, the compact ones are also going to play a key role here. Each of the classical Lie groups $G$ has a unique maximal compact group $K$ up to conjugacy. They are all basically constructed out of the following individual groups
\begin{align*}
& \SO_n=\{A\in\SL(n,\BR)\vert A^*A=\Id\} & { } & \OO_n=\{A\in\GL(n,\BR)\vert A^*A=\Id\}\\
& \SU_n=\{A\in\SL(n,\BC)\vert A^*A=\Id\} & { } & \UU_n=\{A\in\GL(n,\BC)\vert A^*A=\Id\}\\
& \Sp_n=\{A\in\GL(n,\BH)\vert A^*A=\Id\}. & & 
\end{align*}
Note that subindices will be used exclusively for compact groups. For later use we record here the dimensions of these groups as real Lie groups
$$\dim\SO_n=\frac{n(n-1)}2,\ \dim\SU_n=n^2-1,\ \dim\Sp_n=2n^2+n.$$
The dimensions of the other compact groups which appear can be computed easily from these numbers.\\

Later on we will be interested in the dimensions of classical Lie groups, their maximal compact subgroups, the dimensions of the associated symmetric space $G/K$ and the real rank of $G$. We organize this data in the following two tables.
\begin{table}[htdp]
\begin{center}
\caption{Classical Lie groups $G$, maximal compact subgroups $K$ and dimensions as real Lie groups.}\label{Tabla1}
\begin{tabular}{|c|c|c|c|c|c|c|}
$G$ & $K$ & $\dim(G)$ & $\dim(K)$  \\
\hline
$\SL(n,\BC)$ & $\SU_n$ & $2(n^2-1)$ & $n^2-1$  \\
\hline
$\SO(n,\BC)$ & $\SO_n$ & $n(n-1)$ & $\frac{n(n-1)}2$  \\
\hline
$\Sp(2n,\BC)$ & $\Sp_n$ & $2n(2n+1)$ & $n(2n+1)$  \\
\hline
$\SL(n,\BR)$ & $\SO_n$ & $n^2-1$ & $\frac{n(n-1)}2$  \\
\hline
$\SL(n,\BH)$ & $\Sp_n$ & $4n^2-1$ & $n(2n+1)$  \\
\hline
$\SO(p,q)$ ($p\le q$) & $S(\OO_p\times\OO_q)$ & $\frac{(p+q)(p+q-1)}2$ & $\frac{p^2+q^2-p-q}2$  \\
\hline
$\SU(p,q)$ $(p\le q$) & $S(\UU_p\times\UU_q)$ & $(p+q)^2-1$ & $p^2+q^2-1$  \\
\hline
$\Sp(p,q)$ $(p\le q$) & $\Sp_q\times\Sp_q$ & $(p+q)(2p+2q+1)$ & $2p^2+2q^2+p+q$  \\
\hline
$\Sp(2n,\BR)$ & $\UU_n$ & $2n^2+n$ & $n^2$  \\
\hline
$\SO^*(2n)$ & $\UU_n$ & $2n^2-n$ & $n^2$ 
\end{tabular}
\end{center}

\end{table}
\begin{table}[htdp]
\caption{Classical groups $G$, the dimensions of symmetric space $G/K$ and the real rank of $G$. Here and in the sequel $\left[x\right]$ stands for the integer part of $x$.}\label{Tabla2}
\begin{center}
\begin{tabular}{|c|c|c|c|c|c|c|}
$G$ & $\dim(G/K)$ & $\rank_\BR(G)$ \\
\hline
$\SL(n,\BC)$ & $n^2-1$ & $n-1$ \\
\hline
$\SO(n,\BC)$ & $\frac{n(n-1)}2$ & $\left[\frac n2\right]$\\
\hline
$\Sp(2n,\BC)$ & $n(2n+1)$ & $n$ \\
\hline
$\SL(n,\BR)$ & $\frac{n(n-1)}2+n-1$ & $n-1$ \\
\hline
$\SL(n,\BH)$ & $2n^2-n-1$ & $n-1$ \\
\hline
$\SO(p,q)$ ($p\le q$) & $pq$ & $p$ \\
\hline
$\SU(p,q)$ $(p\le q$) & $2pq$ & $p$ \\
\hline
$\Sp(p,q)$ $(p\le q$) & $4pq$ & $p$ \\
\hline
$\Sp(2n,\BR)$ & $n^2+n$ & $n$ \\
\hline
$\SO^*(2n)$ & $n^2-n$ & $\left[\frac n2\right] $
\end{tabular}
\end{center}
\end{table}

\newpage
Armed with the data from Table \ref{Tabla2} we get from Lemma \ref{borel-serre} some lower bounds for the virtual cohomological dimension of lattices in classical groups.

\begin{prop}\label{vcd lower bound}
Let $\Gamma\subset G$ be a lattice.

\begin{itemize}
\item If $G=\SL(n,\BC)$, then $\vcd(\Gamma)\ge n^2-n$.
\item If $G=\SO(n,\BC)$, then $\vcd(\Gamma)\ge \frac{n(n-1)}2-\left[\frac n2\right]$.
\item If $G=\Sp(2n,\BC)$, then $\vcd(\Gamma)\ge 2n^2$.
\item If $G=\SL(n,\BR)$, then $\vcd(\Gamma)\ge \frac{n(n-1)}2$.
\item If $G=\SL(n,\BH)$, then $\vcd(\Gamma)\ge 2n^2-2n$.
\item If $G=\SO(p,q)$ with $p\le q$, then $\vcd(\Gamma)\ge pq-p$.
\item If $G=\SU(p,q)$ with $p\le q$, then $\vcd(\Gamma)\ge 2pq-p$.
\item If $G=\Sp(p,q)$ with $p\le q$, then $\vcd(\Gamma)\ge 4pq-p$.
\item If $G=\Sp(2n,\BR)$, then $\vcd(\Gamma)\ge n^2$.
\item If $G=\SO^*(2n)$, then $\vcd(\Gamma)\ge n^2-n-\left[\frac n2\right]$.
\end{itemize}
In all cases equality happens if and only if $\rank_\BQ(\Gamma)=\rank_\BR(G)$.\qed
\label{prop-vcd}
\end{prop}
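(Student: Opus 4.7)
The key observation is that every assertion in the proposition is of the form $\vcd(\Gamma) \ge \dim(G/K) - \rank_\BR(G)$, with equality characterised by $\rank_\BQ(\Gamma) = \rank_\BR(G)$. So the plan is simply to reduce each case to this single estimate and then read off the numerical bound from Table \ref{Tabla2}.

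First I would separate according to the real rank of $G$. If $\rank_\BR(G) \ge 2$, then every lattice $\Gamma \subset G$ is arithmetic by the Margulis arithmeticity theorem, and Lemma \ref{borel-serre} directly gives
\[\vcd(\Gamma) \ge \dim(G/K) - \rank_\BR(G)\]
with equality precisely when $\rank_\BQ(\BG) = \rank_\BR(\BG)$, at least in the case that $\Gamma$ is non-uniform. For uniform lattices in such a $G$ the symmetric space $S=G/K$ itself is a finite-dimensional cocompact model for $\underline E\Gamma$ (it is a contractible manifold on which $\Gamma$ acts properly and cocompactly with finite stabilisers), so Poincar\'e duality forces $\vcd(\Gamma)=\dim(S)$, which is strictly larger than $\dim(S)-\rank_\BR(G)$; on the other hand, uniformity forces $\rank_\BQ(\Gamma)=0$, so both the inequality and the characterisation of equality remain valid. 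The remaining rank one members of our list --- namely the groups $\SL(2,\BR)$, $\SL(2,\BC)$, $\SL(2,\BH)$, $\SO(1,q)$, $\SU(1,q)$, $\Sp(1,q)$ and $\Sp(2,\BR)$ --- are handled by Proposition \ref{borel-serre-vcd1}: a uniform lattice gives $\vcd(\Gamma)=\dim S$ while a non-uniform lattice gives $\vcd(\Gamma)=\dim S-1=\dim S-\rank_\BR(G)$. In both rank regimes the inequality $\vcd(\Gamma)\ge \dim(G/K)-\rank_\BR(G)$ therefore holds, and equality takes place if and only if $\rank_\BQ(\Gamma)=\rank_\BR(G)$ (in the rank one case this amounts simply to $\Gamma$ being non-uniform, consistent with $\rank_\BQ(\Gamma)\in\{0,1\}$).

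Having established the general inequality, the rest is bookkeeping: I would substitute into $\dim(G/K)-\rank_\BR(G)$ the values tabulated in Table \ref{Tabla2} and verify each line of the proposition. For instance $\dim(G/K)-\rank_\BR(G)$ equals $(n^2-1)-(n-1)=n^2-n$ for $\SL(n,\BC)$, equals $n(2n+1)-n=2n^2$ for $\Sp(2n,\BC)$, equals $\tfrac{n(n-1)}2+n-1-(n-1)=\tfrac{n(n-1)}2$ for $\SL(n,\BR)$, equals $pq-p$ for $\SO(p,q)$, and so on through the list; each of these matches the bound stated in the proposition.

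No real obstacle arises here: the content is packaged entirely in Lemma \ref{borel-serre} and Proposition \ref{borel-serre-vcd1}, and what remains is arithmetic with the data of Table \ref{Tabla2}. The only point that deserves explicit mention in the write-up is the separation between the uniform and non-uniform cases, and the observation that for uniform $\Gamma$ one has $\rank_\BQ(\Gamma)=0<\rank_\BR(G)$ while $\vcd(\Gamma)=\dim S$ is strictly greater than the asserted lower bound, so the ``equality iff'' clause remains correct.
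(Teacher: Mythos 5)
Your proposal is correct and follows essentially the same route as the paper: the result is read off from Lemma \ref{borel-serre} together with the data in Table \ref{Tabla2}. You are somewhat more explicit than the paper in treating the uniform and real-rank-one edge cases (via Proposition \ref{borel-serre-vcd1} and Poincar\'e duality), which the paper passes over silently; note only that your list of rank-one members omits $\SO(3,\BC)$, $\Sp(2,\BC)$, $\SO^*(4)$ and $\SO^*(6)$, though this does not affect the argument since it applies to all rank-one groups uniformly.
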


Also, recall that by Corollary \ref{crit0} we have $\vcd(\Gamma)=\gdim(\Gamma)$ for every lattice $\Gamma$ in a group with real rank $1$. Hence, we obtain the following.

\begin{kor}\label{case0}
If $\Gamma$ is a lattice in 
\begin{itemize}
\item $\SO(1,q)$, $\SU(1,q)$ or $\Sp(1,q)$ for some $q$, or in
\item $\SL(2,\BC)$, $\SO(3,\BC)$, $\Sp(2,\BC)$, $\SL(2,\BR)$, $\SL(2,\BH)$, $\Sp(2,\BR)$, $\SO^*(4)$, $\SO^*(6)$, 
\end{itemize}
then $\gdim(\Gamma)=\vcd(\Gamma)$.\qed
\end{kor}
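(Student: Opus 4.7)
The plan is to verify that every group $G$ listed in Corollary \ref{case0} has real rank exactly $1$, and then invoke Corollary \ref{crit0} directly. No further machinery from Sections \ref{sec:tools} or beyond is needed for this statement; it is essentially a bookkeeping verification using Table \ref{Tabla2}.

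First I would handle the first bullet. For $G = \SO(1,q)$, $\SU(1,q)$, $\Sp(1,q)$ the real rank is $p = 1$ by the entries in Table \ref{Tabla2} (all three rows give $\rank_\BR(G) = p$ when $p \le q$, and here $p=1$). So Corollary \ref{crit0} applies at once, yielding $\gdim(\Gamma) = \vcd(\Gamma)$ for any lattice $\Gamma \subset G$.

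Next I would go through the second bullet one entry at a time, reading the real rank from Table \ref{Tabla2}. Namely: $\rank_\BR \SL(2,\BC) = n-1 = 1$; $\rank_\BR \SO(3,\BC) = [3/2] = 1$; $\rank_\BR \Sp(2,\BC) = n = 1$ (with the convention $\Sp(2n,\BC)$ at $n=1$); $\rank_\BR \SL(2,\BR) = n-1 = 1$; $\rank_\BR \SL(2,\BH) = n-1 = 1$; $\rank_\BR \Sp(2,\BR) = n = 1$; and for $\SO^*(2n)$, $\rank_\BR = [n/2]$, so $\SO^*(4)$ has rank $[2/2] = 1$ and $\SO^*(6)$ has rank $[3/2] = 1$. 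Applying Corollary \ref{crit0} in each case finishes the proof.

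There is no real obstacle here: the only thing to watch is that $\Gamma$ be a lattice in the \emph{real points} of the algebraic group in question, which is exactly the hypothesis of Corollary \ref{crit0}. Thus the whole argument reduces to a single line in each case, and the corollary follows.
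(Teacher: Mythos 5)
Your proposal is correct and is exactly the paper's argument: the paper places this corollary right after the remark that Corollary \ref{crit0} gives $\gdim(\Gamma)=\vcd(\Gamma)$ for lattices in real-rank-one groups, and the listed groups are precisely those classical groups with $\rank_\BR=1$ as read from Table \ref{Tabla2}. Your case-by-case rank check matches the paper's (implicit) verification, so there is nothing to add.
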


%
%
%
%

\section{Centralizers in compact groups}\label{sec:compact}

In this section we recall how the centralizers of elements in the compact groups $\SO_n,\SU_n$ and $\Sp_n$ look like.

\subsection{Unitary groups}
The diagonal subgroup
$$T=\left\{\left(
\begin{array}{ccc}
\lambda_1 & & \\
& \ddots & \\
& & \lambda_n
\end{array}
\right)\middle| \lambda_i\in\BS^1\right\}$$
is a maximal torus in $\UU_n$. Since $\UU_n$ is connected, every element $A\in\UU_n$ is contained in a maximal torus, and since any two maximal tori are conjugate, it follows that any $A\in\UU_n$ can be conjugated to some $A'\in T$ and that $C_{\UU_n}(A)$ and $C_{\UU_n}(A')$ are also conjugate.

Given $A\in T,$ let $\lambda$ be an eigenvalue and  $E_\lambda^A$ the associated eigenspace. Note that $E_\lambda^A$ and $E_\mu^A$ are orthogonal for any two distinct $\lambda\neq\mu$. Observe also that each  of the eigenspaces $E^A_\lambda$ is invariant under any element in $\UU_n$ which commutes with $A$. On the other hand, every element in $\UU_n$ which preserves the eigenspaces $E_\lambda^A$ commutes with $A$. Altogether we get that the centralizer $C_{\UU_n}(A)$ of $A$ consists precisely of those elements in $\UU_n$ which preserve the eigenspaces $E_\lambda^A$. If the said eigenspaces are $E_{\lambda_1}^A,\dots,E_{\lambda_r}^A$ and  $\dim E_{\lambda_i}^A=d_i$, then again up to conjugation in $\UU_n$ we can assume that $E_{\lambda_i}^A$ is spanned by the vectors $e_j$ in the standard basis with $j\in\{1+\sum_{k<i}d_k,\ldots,\sum_{k\le i}d_k\}$. This just means that any element centralizing $A$ has a block form
$$\left(
\begin{array}{ccc}
O_1 & & \\
& \ddots & \\
& & O_r
\end{array}
\right)$$
with $O_i$ a $d_i$-square matrix. Altogether we have the following well-known fact that we state as a lemma for future reference.

\begin{lem}\label{centralizer-U}
The centralizer $C_{\UU_n}(A)$ of any element $A\in\UU_n$ is conjugate within $\UU_n$ to the group $\UU_{d_1}\times\dots\times\UU_{d_r}$ where the numbers $d_1,\dots,d_r$ are the dimensions of the different eigenspaces of $A$.
\end{lem}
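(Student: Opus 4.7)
The plan is to flesh out the argument already sketched in the paragraph preceding the lemma. First, I would invoke the spectral theorem for unitary operators (equivalently, the fact that every element of the connected compact Lie group $\UU_n$ lies on a maximal torus) to find $g\in\UU_n$ with $gAg^{-1}$ diagonal. Because centralizers transform by conjugation, $C_{\UU_n}(gAg^{-1}) = g\,C_{\UU_n}(A)\,g^{-1}$, so without loss of generality we may assume $A$ itself is diagonal with distinct eigenvalues $\lambda_1,\dots,\lambda_r$ of multiplicities $d_1,\dots,d_r$.

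The second step is the identification of $C_{\UU_n}(A)$ with the subgroup of $\UU_n$ preserving the eigenspace decomposition $\BC^n = E^A_{\lambda_1}\oplus\cdots\oplus E^A_{\lambda_r}$. One inclusion is immediate, since any operator commuting with a diagonalizable one must preserve each of its eigenspaces; for the converse, $A$ acts as the scalar $\lambda_i$ on $E^A_{\lambda_i}$, so any unitary acting block-wise with respect to this orthogonal decomposition automatically commutes with $A$.

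Finally, I would apply a permutation matrix in $\UU_n$ to rearrange the standard basis so that $E^A_{\lambda_i}$ is spanned by the consecutive block $e_{1+\sum_{k<i}d_k},\dots,e_{\sum_{k\le i}d_k}$. In this basis the centralizer is visibly the block-diagonal subgroup $\UU_{d_1}\times\cdots\times\UU_{d_r}$, giving the desired description up to conjugation in $\UU_n$. There is no genuine obstacle; the only subtlety is to observe that each conjugation performed along the way lies in $\UU_n$, which is automatic for the diagonalizing $g$ (because $A$ is unitary and eigenspaces for distinct eigenvalues are orthogonal) and for the basis reshuffling (since permutation matrices are orthogonal, hence unitary).
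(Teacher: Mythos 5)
Your proof follows essentially the same route as the paper: diagonalize $A$ inside $\UU_n$ (via the spectral theorem / maximal torus argument), identify $C_{\UU_n}(A)$ with the stabilizer of the orthogonal eigenspace decomposition, and conjugate by a permutation matrix to put the eigenspaces in block form. The argument is correct and matches the paper's sketch point for point.
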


Since any two elements in $\SU_n$ which are conjugate within $\UU_n$ are also conjugate within $\SU_n$, we also have the corresponding result for special unitary groups.

\begin{lem}\label{centralizer-SU}
The centralizer $C_{\SU_n}(A)$ of any element $A\in\SU_n$ is conjugate within $\SU_n$ to the group $S(\UU_{d_1}\times\dots\times\UU_{d_r})$ where the numbers $d_1,\dots,d_r$ are the dimensions of the different eigenspaces of $A$.
\end{lem}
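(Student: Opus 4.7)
The plan is to reduce the statement for $\SU_n$ to the already established description of $C_{\UU_n}(A)$ in Lemma \ref{centralizer-U}, and then use the fact that $\SU_n$ is a normal subgroup of $\UU_n$ of finite codimension in a very controlled way (the quotient is an abelian group on which conjugation is trivial).

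First I would apply Lemma \ref{centralizer-U} to $A\in\SU_n\subset\UU_n$: there exists $g\in\UU_n$ conjugating $A$ to a diagonal matrix whose equal eigenvalues are grouped into consecutive blocks of sizes $d_1,\dots,d_r$, and for this $g$ one has
\[
g\, C_{\UU_n}(A)\, g^{-1}\;=\;\UU_{d_1}\times\cdots\times\UU_{d_r},
\]
where the right-hand side sits in $\UU_n$ as block-diagonal matrices. Since $C_{\SU_n}(A)=C_{\UU_n}(A)\cap\SU_n$, intersecting both sides with $\SU_n$ gives
\[
g\, C_{\SU_n}(A)\, g^{-1}\;=\;(\UU_{d_1}\times\cdots\times\UU_{d_r})\cap\SU_n\;=\;S(\UU_{d_1}\times\cdots\times\UU_{d_r}).
\]

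The only thing left is to upgrade $g\in\UU_n$ to an element of $\SU_n$. For this I would use the standard trick: the center $\BS^1\cdot\Id$ of $\UU_n$ centralizes $A$ and acts trivially by conjugation, so replacing $g$ by $\lambda g$ for any scalar $\lambda\in\BS^1$ does not change the conjugation. Choosing $\lambda$ so that $\lambda^n=\det(g)^{-1}$ gives $\lambda g\in\SU_n$ and still
\[
(\lambda g)\, C_{\SU_n}(A)\, (\lambda g)^{-1}\;=\;S(\UU_{d_1}\times\cdots\times\UU_{d_r}),
\]
which is the desired conclusion.

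There is no real obstacle here: the content of the lemma lies entirely in Lemma \ref{centralizer-U}, and the passage from $\UU_n$ to $\SU_n$ is purely formal thanks to the surjectivity of $\det\colon\UU_n\to\BS^1$ combined with the fact that $\BS^1\cdot\Id$ lies in the center of $\UU_n$. The small observation the author emphasizes, namely that $\UU_n$-conjugate elements of $\SU_n$ are already $\SU_n$-conjugate, is exactly this scalar-rescaling argument.
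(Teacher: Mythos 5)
Your proof is correct and is essentially the same as the paper's: the paper disposes of the lemma in one sentence by observing that $\UU_n$-conjugate elements of $\SU_n$ are already $\SU_n$-conjugate, and your scalar-rescaling argument (replacing $g$ by $\lambda g$ with $\lambda\in\BS^1$ chosen so $\lambda^n=\det(g)^{-1}$, which leaves conjugation unchanged because $\lambda\Id$ is central) is exactly the mechanism behind that observation, just spelled out in full.
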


\subsection{Special orthogonal group $\SO_n$}\label{centSO}
Since the group $\SO_n$ is connected, we can again conjugate any element in $\SO_n$ into a particular maximal torus $T$ of our choosing. For $n=2k$ the group
$$T=\left\{\left(
\begin{array}{ccc}
O_1 & & \\
& \ddots & \\
& & O_k
\end{array}
\right)\middle| O_i\in\SO_2\right\}$$
is a maximal torus of $\SO_n$. For $n=2k+1$ a maximal torus is the group
$$T=\left\{\left(
\begin{array}{cccc}
O_1 & & & \\
& \ddots & & \\
& & O_k & \\
& & & 1
\end{array}
\right)\middle| O_i\in\SO_2\right\}.$$
In all cases we get that any element in $\SO_n$ is conjugate to an element of the form
$$A=\left(
\begin{array}{ccccc}
A_1 & & & & \\
& \ddots & & & \\
& & A_r & & \\
& & & \Id_s & \\
& & & & -\Id_t
\end{array}\right)$$
where each $A_i$ is a $2d_i$-square matrix
$$A_i=\left(
\begin{array}{ccc}
O_i & & \\
& \ddots & \\
& & O_i
\end{array}
\right)$$
with $O_i\in\SO_2$ but $O_i\neq\pm\Id$ and such that $O_i\neq O_j$ for $i\neq j$. Any element in $\OO_n$ which commutes with $A$  preserves these blocks, meaning that
$$C_{\OO_n}(A)=C_{\OO_{2d_1}}(A_1)\times\dots\times C_{\OO_{2d_r}}(A_r)\times\OO_s\times\OO_t$$
Observing that the element $A_i\in\SO_{2d_i}$ preserves an (essentially unique) complex structure on $\BR^{2d_i}$, one deduces that every element which commutes with it has to also preserve it. This means that in fact, if we take the usual identification between $\BC^{d_i}$ and $\BR^{2d_i}$ we have that $C_{\OO_{2d_i}}(A_i)=\UU_{d_i}$. Altogether we have the following.

\begin{lem}\label{centralizer-SO}
The centralizer $C_{\SO_n}(A)$ of any element $A\in\SO_n$ is conjugate within $\SO_n$ to the group $S(\UU_{d_1}\times\dots\times\UU_{d_r}\times\OO_s\times\OO_t)$ where $s$ and $t$ are respectively the dimensions of the $\pm 1$-eigenspaces and where $n=2d_1+\dots+2d_r+s+t$.
\end{lem}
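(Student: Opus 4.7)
The plan is essentially to flesh out the sketch given in the text before the lemma. Since $\SO_n$ is connected, every element lies in some maximal torus, and all maximal tori are conjugate; therefore it suffices to analyze centralizers of elements lying in the specific torus $T$ displayed above. Bringing $A$ into that torus and then collecting together the $\SO_2$-blocks whose rotation angle coincides, together with the $\pm 1$-eigenspaces, yields a decomposition
\[
\BR^n=V_1\oplus\dots\oplus V_r\oplus E_{+1}\oplus E_{-1}
\]
with $\dim V_i=2d_i$, $\dim E_{+1}=s$, $\dim E_{-1}=t$, and $A|_{V_i}=A_i$ as in the text.

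Next I would argue that this decomposition is canonical, i.e.\ determined by $A$ alone: on the complexification, the $V_i\otimes\BC$ are exactly the sum of the $\lambda_i$- and $\bar\lambda_i$-eigenspaces for a pair $\lambda_i=e^{i\theta_i}\neq\pm 1$, while $E_{\pm 1}$ are the real $\pm 1$-eigenspaces. Hence any $B\in C_{\OO_n}(A)$ preserves each summand, and so
\[
C_{\OO_n}(A)=C_{\OO_{2d_1}}(A_1)\times\dots\times C_{\OO_{2d_r}}(A_r)\times\OO_s\times\OO_t,
\]
where the last two factors come from the trivial action of $A$ on $E_{\pm 1}$.

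The content-bearing step is the identification $C_{\OO_{2d_i}}(A_i)\cong\UU_{d_i}$. I would observe that $A_i=\Id_{d_i}\otimes O_i$ with $O_i\in\SO_2\setminus\{\pm\Id\}$; the subalgebra of $M_2(\BR)$ commuting with $O_i$ is then exactly $\BR[O_i]\cong\BC$, and it contains the standard complex structure $J_1$. Pulling this back via the isomorphism $V_i\cong\BR^{d_i}\otimes\BR^2$ endows $V_i$ with an $A_i$-compatible complex structure $\widetilde J_i$, so that $(V_i,\langle\cdot,\cdot\rangle,\widetilde J_i)\cong\BC^{d_i}$. Any element of $\OO_{2d_i}$ commuting with $A_i$ must preserve the real algebra generated by $A_i$ and hence commute with $\widetilde J_i$; being also an isometry, it lies in the unitary group of $(V_i,\widetilde J_i)$, i.e.\ in $\UU_{d_i}$. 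Conversely every $\UU_{d_i}$-element commutes with $\widetilde J_i$ and thus with $A_i$, so the inclusion is an equality.

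Finally, intersecting with the determinant-one condition gives
\[
C_{\SO_n}(A)=\bigl(\UU_{d_1}\times\dots\times\UU_{d_r}\times\OO_s\times\OO_t\bigr)\cap\SO_n=S\bigl(\UU_{d_1}\times\dots\times\UU_{d_r}\times\OO_s\times\OO_t\bigr),
\]
which is the claimed description. I expect the only subtle point to be the identification of $C_{\OO_{2d_i}}(A_i)$ with $\UU_{d_i}$; the rest is a careful but routine block decomposition argument parallel to the unitary case treated in Lemma \ref{centralizer-U}.
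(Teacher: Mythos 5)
Your proof is correct and follows the same route as the paper: conjugate into the explicit maximal torus, group the $\SO_2$-blocks by rotation angle together with the $\pm 1$-eigenspaces, observe that any centralizing element preserves the resulting decomposition, and identify $C_{\OO_{2d_i}}(A_i)\cong\UU_{d_i}$ via the complex structure determined by $A_i$. If anything you spell out the key step more carefully than the paper does, by exhibiting $\widetilde J_i$ as a real-linear polynomial in $A_i$ (namely $\widetilde J_i=(A_i-\cos\theta_i\,\Id)/\sin\theta_i$), which makes transparent why commuting with $A_i$ forces commuting with $\widetilde J_i$; the only cosmetic nit is that ``preserve the real algebra generated by $A_i$'' is better phrased as ``commute with every element of $\BR[A_i]$, in particular with $\widetilde J_i\in\BR[A_i]$.''
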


\subsection{Compact symplectic groups $\Sp_n$}
The matrix
$$\hat J_n=\left(
\begin{array}{ccc}
J_1 & & \\
 & \ddots & \\
 & & J_1
\end{array}
\right)=\left(
\begin{array}{ccccc}
0 & 1 & & & \\
-1 & 0 & & & \\
& & \ddots & & \\
& & & 0 & 1 \\
& & & -1 & 0
\end{array}
\right)$$
is conjugate within $\UU_{2n}$ to $J_n$ by a matrix with real values. In particular, we obtain that the compact symplectic group $\Sp_n=\Sp(2n,\BC)\cap\UU_{2n}$ is conjugate within $\UU_{2n}$ to the group 
$$\widehat\Sp_n=\{A\in\UU_{2n}\vert A^t\hat J_n A=\hat J_n\}$$
This later incarnation of the compact symplectic group has some advantages. For instance, it is easier to see it as a subgroup of the group of automorphisms of $n$-dimensional quaternionic space $\BH^n$ (where we multiply scalars in $\BH^n$ from the right). Indeed, recalling the standard identification of the quaternions $\BH$ with the following subalgebra of $M_2(\BC)$
$$\BH=\left\{
\left(
\begin{array}{cc}
z  & -\bar w \\
w  & \bar z
\end{array}
\right)\middle| z,w\in\BC\right\},$$
we get an identification
$$\GL(n,\BH)=\left\{
\left(
\begin{array}{ccc}
q_{1,1} & \dots & q_{1,n} \\
\vdots & \ddots & \vdots \\
q_{n,1} & \dots & q_{n,n}
\end{array}
\right)\in\GL(2n,\BC)\middle| q_{i,j}\in\BH\right\}.$$
Noting that $\BH$ consists exactly of those elements $A\in M_2\BC$ with $J_1A=\bar AJ_1$, and taking into account that for every $A\in\widehat\Sp_{2n}$ we have at the same time that $A^*=A^{-1}$ and that $A^t\hat J_nA=\hat J_n$, one gets that $\widehat\Sp_n\subset\GL_n\BH$. Hoping that no confusion will occur, we will from now on denote both versions, $\widehat\Sp_n$ and $\Sp_n$, of the compact symplectic group by the same symbol $\Sp_n$.

After these preliminary remarks we proceed as above. As in the previous cases, the connectedness of $\Sp_n$ implies that every element in $\Sp_n$ can be conjugated into any maximal torus. In this case a maximal torus is given by the group
$$T=\left\{
\left(
\begin{array}{ccccc}
\lambda_1 & 0 & & & \\
0 & \bar\lambda_1 & & & \\
& & \ddots & & \\
& & & \lambda_n & 0 \\
& & & 0 & \bar\lambda_n
\end{array}
\right)
\middle| \lambda_i\in\BC\ \hbox{with}\ \vert\lambda_i\vert=1\right\}.$$
This implies in particular that if $\lambda$ is an eigenvalue of $A$, then $\bar\lambda$ is also an eigenvalue. For any eigenvalue $\lambda$ let 
\begin{equation}\label{eq-quater-eigen}
V_\lambda=E_{\lambda} + E_{\bar\lambda}
\end{equation}
be the sum of the eigenspaces corresponding to $\lambda,\bar\lambda$. This is a direct sum unless $\lambda=\pm 1$. We recall a few properties of the spaces $V_\lambda$.
\begin{itemize}
\item $V_\lambda$ is a quaternionic subspace of $\BH^n$.
\item For $\lambda\neq\mu,\bar\mu$, the spaces $V_\lambda$ and $V_\mu$ are orthogonal with respect to the standard unitary scalar product of $\BC^{2n}$.
\item The symplectic form $(v,w)\mapsto v^t J_nw$ (resp. $(v,w)\mapsto v^t\hat J_nw$) on $\BC^{2n}$ restricts to a non-degenerate symplectic form on each $V_\lambda$. 
\end{itemize}

In particular, every element in $\Sp_n$ which centralizes $A$ restricts to an element in $\Sp(V_\lambda)$ for each eigenvalue $\lambda$. It follows that if denote the eigenvalues of $A$ by $\lambda_1,\dots,\lambda_r$,  let $2d_i=\dim V_{\lambda_i}$ and set
\begin{equation}\label{blabla}
\Lambda_i=
\left(
\begin{array}{ccccc}
\lambda_i & 0 & & & \\
0 & \bar\lambda_i & & & \\
& & \ddots & & \\
& & & \lambda_i & 0 \\
& & & 0 & \bar\lambda_i
\end{array}
\right) \in \Sp_{d_i}
\end{equation}
we get that the centralizer of $A$ in $\Sp_n$ can be conjugated within $\Sp_n$ to
$$C_{\Sp_n}(A)\stackrel{\hbox{\tiny conj.}}\simeq C_{\Sp_{d_1}}(\Lambda_1)\times\dots\times C_{\Sp_{d_r}}(\Lambda_r)$$
Continuing with the same notation, suppose that $\lambda_i$ is not real. In this case the element $\Lambda_i$ is not central in $\GL(d_i,\BH)$. In fact, its centralizer is the subgroup consisting of quaternionic matrices with coefficients in the subalgebra
$$\BC=\left\{
\left(
\begin{array}{cc}
z  & 0 \\
0  & \bar z
\end{array}
\right)\middle| z\in\BC\right\}\subset\BH.$$
It follows that the centralizer of $\Lambda_i$ in $\GL(d_i,\BH)$ is the subgroup $\GL(d_i,\BC)$, which leads to
$$C_{\Sp_{d_i}}(\Lambda_i)\simeq\UU_{d_i}.$$
All this implies the following.
\begin{lem}\label{centralizer-Sp}
The centralizer $C_{\Sp_n}(A)$ of any element $A\in\Sp_n$ is isomorphic to the group $\UU_{d_1}\times\dots\times\UU_{d_r}\times\Sp_s\times\Sp_t$ where $s$ and $t$ are respectively the dimensions of the $\pm 1$-eigenspaces and where $n=d_1+\dots+d_r+s+t$.
\end{lem}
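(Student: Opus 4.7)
The plan is to directly extend the analysis that the text has already carried out immediately before the lemma statement. In that discussion one conjugates $A$ into the standard maximal torus $T$ of $\Sp_n$, so that the eigenvalues come in conjugate pairs $\{\lambda,\bar\lambda\}$, decomposes $\BH^n$ as an orthogonal sum of the quaternionic subspaces $V_\lambda=E_\lambda+E_{\bar\lambda}$, and observes that each centralizing element preserves this decomposition. From the three listed properties of the $V_\lambda$ (each is quaternionic, they are pairwise orthogonal, and the restriction of the symplectic form to each $V_\lambda$ is non-degenerate), I would first conclude that $\bigoplus_\lambda V_\lambda = \BH^n$ as an \emph{orthogonal symplectic} direct sum. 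This gives $\Sp_n\cap\prod_\lambda\GL(V_\lambda)=\prod_\lambda\Sp(V_\lambda)$, and hence
\[
C_{\Sp_n}(A)\;\simeq\;\prod_\lambda C_{\Sp(V_\lambda)}\bigl(A|_{V_\lambda}\bigr).
\]

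The next step is to split the eigenvalues into two kinds, paralleling the treatment of $\SO_n$ in Lemma \ref{centralizer-SO}. For a non-real eigenvalue $\lambda_i$ (paired with $\bar\lambda_i$), the text already shows that on $V_{\lambda_i}$ of quaternionic dimension $d_i$ the matrix $\Lambda_i$ of \eqref{blabla} has centralizer $C_{\Sp_{d_i}}(\Lambda_i)\simeq\UU_{d_i}$, via the identification $C_{\GL(d_i,\BH)}(\Lambda_i)=\GL(d_i,\BC)$. For a real eigenvalue $\lambda=\pm 1$, the restriction $A|_{V_\lambda}$ equals $\pm\Id$ and is therefore \emph{central} in $\Sp(V_\lambda)$; its centralizer is the whole group, which is $\Sp_s$ for $\lambda=+1$ (where $s$ is the quaternionic dimension of $V_{+1}$, i.e.\ the $+1$-eigenspace) and $\Sp_t$ for $\lambda=-1$. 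Assembling these factors, after reindexing the non-real pairs as $\lambda_1,\ldots,\lambda_r$, gives the asserted isomorphism $C_{\Sp_n}(A)\simeq\UU_{d_1}\times\cdots\times\UU_{d_r}\times\Sp_s\times\Sp_t$, and summing quaternionic dimensions yields $n=d_1+\cdots+d_r+s+t$.

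Consequently, no genuinely new computation is required: the heavy lifting (the quaternionic structure on $V_\lambda$, the product decomposition, and the non-real case) is already in place. The only remaining point is the elementary observation that $\pm\Id$ is central in $\Sp_m$, together with some bookkeeping to make sure that $\lambda_1,\ldots,\lambda_r$ in the statement enumerate only the non-real eigenvalues while $s,t$ separately record the $\pm 1$-eigenspaces. There is no real obstacle; the mild subtlety is simply keeping the quaternionic versus complex dimension count consistent between the real and non-real eigenvalue contributions.
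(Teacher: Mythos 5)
Your proposal is correct and follows essentially the same route as the paper: it is precisely the paper's own pre-lemma discussion (conjugate $A$ into the standard torus, form the $V_\lambda$, use their orthogonal symplectic decomposition to split off factor-by-factor centralizers, and identify $C_{\Sp_{d_i}}(\Lambda_i)\simeq\UU_{d_i}$ via $C_{\GL(d_i,\BH)}(\Lambda_i)=\GL(d_i,\BC)$ for non-real $\lambda_i$), with the additional—easy and correct—observation that for $\lambda=\pm 1$ the restriction is central so the factor is the full $\Sp_s$ or $\Sp_t$, a step the paper leaves implicit.
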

For later use, we remark that with the same notation as in Lemma \ref{centralizer-Sp} we also have
$$C_{\GL(n,\BH)}(A)\simeq\GL(d_1,\BC)\times\dots\times\GL(d_r,\BC)\times\GL(s,\BH)\times\GL(t,\BH).$$

%
%
%
%

\section{Easy cases}\label{sec:easy}
In this section we will prove that $\cdm(\Gamma)=\vcd(\Gamma)$ for any lattice $\Gamma$ in a classical group belonging to one of the following $8$ families: $\SL(n,\BC)$, $\SO(n,\BC)$, $\Sp(2n,\BC)$, $\SL(n,\BH)$, $\SU(p,q)$, $\Sp(p,q)$, $\Sp(2n,\BR)$, $\SO^*(2n)$. 

The cases treated here are relatively simple because in all of them we have that the dimension of the fixed point set $S^A$ of any non-central element $A\in\Gamma$ is smaller than the virtual cohomological dimension of $\Gamma$. This implies that the desired conclusion follows from Corollary \ref{crit1}. As before, $S$ is the symmetric space associated to the simple Lie group containing $\Gamma$.

\subsection{Complex special linear group $\SL(n,\BC)$}
Consider the symmetric space $S=\SL(n,\BC)/\SU_n$ and let $A\in\SL(n,\BC)$ be a non-central finite order element. The matrix $A$ can be conjugated into the maximal compact subgroup $\SU_n$. Moreover, we get from Corollary \ref{complexification} that
$$\dim S^A=\dim C_{\SU_n}(A)$$
where the dimension is that of the centralizer as a real Lie group. Now, by Lemma \ref{centralizer-SU} we have that up to conjugation in $\SU_n$
$$C_{\SU_n}(A)=S(\UU_{d_1}\times\dots\times\UU_{d_r})$$
where $\sum_{i=1}^r d_i=n$ and where $r\ge 2$ as $A$ is not central. It follows that
$$\dim C_{\SU_n}(A)=-1+\sum_i\dim\UU_{d_i}=-1+\sum_id_i^2$$
Now, for $A$ non-central, this quantity is maximized if $r=2$ and $d_1=1$, meaning that
$$\dim S^A=\dim C_{\SU_n}(A)\le(n-1)^2$$
Now, if $\Gamma\subset\SL(n,\BC)$ is a lattice we have that 
$$\vcd(\Gamma)\ge n^2-n=(n-1)^2+(n-1)$$
by Proposition \ref{vcd lower bound}. Corollary \ref{crit1} applies and we obtain the following.

\begin{lem}\label{done-slnc}
If $\Gamma$ is a lattice in $\SL(n,\BC)$ for $n\geq 2 $, then $\cdm\Gamma=\vcd(\Gamma)$.\qed
\end{lem}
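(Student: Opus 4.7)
The plan is to apply Corollary \ref{crit1}, so it suffices to show that $\dim S^A < \vcd(\Gamma)$ for every non-central finite order element $A \in \Gamma$. The three ingredients are already in place: the dimension formula $\dim S^A = \dim C_{\SU_n}(A)$ for complex groups from Corollary \ref{complexification}; the structure of centralizers in $\SU_n$ from Lemma \ref{centralizer-SU}; and the lower bound $\vcd(\Gamma) \ge n^2 - n$ from Proposition \ref{vcd lower bound}.

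First I would fix a non-central finite order element $A \in \Gamma$. Since $A$ is of finite order it is semisimple and can be conjugated into the maximal compact subgroup $\SU_n$. Corollary \ref{complexification} applies because $\SL(n,\BC)$ is the group of complex points of a semisimple algebraic group, so $\dim S^A = \dim C_{\SU_n}(A)$. By Lemma \ref{centralizer-SU}, up to conjugation in $\SU_n$ this centralizer is $S(\UU_{d_1} \times \cdots \times \UU_{d_r})$, with $d_1 + \cdots + d_r = n$ and $r \ge 2$ (the latter being exactly the non-centrality of $A$). Hence
$$\dim S^A = -1 + \sum_{i=1}^{r} d_i^2.$$

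The key elementary step is the optimization: among compositions $n = d_1 + \cdots + d_r$ with $r \ge 2$ and $d_i \ge 1$, the sum $\sum d_i^2$ is maximized by the partition $(1, n-1)$, giving $\sum d_i^2 = 1 + (n-1)^2$. This is a one-line convexity argument (peeling off any summand $d_i \ge 2$ from a part and merging it with another strictly increases $\sum d_i^2$). Therefore $\dim S^A \le (n-1)^2$.

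Combining with the bound $\vcd(\Gamma) \ge n^2 - n = (n-1)^2 + (n-1)$, and using that $n \ge 2$ so $n-1 \ge 1$, we conclude
$$\dim S^A \le (n-1)^2 < (n-1)^2 + (n-1) \le \vcd(\Gamma),$$
and Corollary \ref{crit1} gives $\cdm(\Gamma) = \vcd(\Gamma)$. There is no real obstacle here: this is exactly the regime where the centralizer dimensions sit comfortably below the vcd bound, and the only verification needed is the combinatorial maximization, which leaves a gap of $n-1 \ge 1$.
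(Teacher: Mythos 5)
Your proof is correct and follows exactly the same route as the paper: conjugate $A$ into $\SU_n$, apply Corollary \ref{complexification} and Lemma \ref{centralizer-SU} to get $\dim S^A = -1 + \sum d_i^2$, maximize over partitions with $r\ge 2$ to get $(n-1)^2$, and compare with the bound $\vcd(\Gamma)\ge n^2-n$ from Proposition \ref{vcd lower bound} via Corollary \ref{crit1}. Nothing to add.
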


\subsection{Complex special orthogonal groups $\SO(n,\BC)$}\label{complexorthogonal}
As always we denote by $S=\SO(n,\BC)/\SO_n$ the relevant symmetric space. Any finite order element $A\in\SO(n,\BC)$ can be conjugated into the maximal compact subgroup $\SO_n$, and again we obtain from Corollary \ref{complexification} that
$$\dim S^A=\dim C_{\SO_n}(A).$$
From Lemma \ref{centralizer-SO} one gets that $C_{\SO_n}(A)$ is conjugate within $\SO_n$ to the group $S(\UU_{d_1}\times\dots\times\UU_{d_r}\times\OO_s\times\OO_t)$ where $s$ and $t$ are respectively the dimensions of the $\pm 1$-eigenspaces and where $n=2d_1+\dots+2d_r+s+t$. It follows that 
\begin{equation}\label{eq-copen1}
\dim C_{\SO_n}(A)=\sum_{i=1}^rd_i^2+\frac{s(s-1)}2+\frac{t(t-1)}2
\end{equation}
It is easy to see that as long as $n\ge 5$, this quantity is maximal only if $r=0$ and either $s$ or $t$ is equal to $1$, meaning that
$$\dim S^A\le \frac{(n-1)(n-2)}2=\frac{n(n-3)}2+1\ \ \hbox{if}\ n\ge 5.$$
Now, if $\Gamma\subset\SO(n,\BC)$ is a lattice then we have that 
$$\vcd(\Gamma)\ge \frac{n(n-2)}2$$
by Proposition \ref{vcd lower bound}. Taken together, these two inequalities imply that, as long as $n\ge 5$, the assumption of Corollary \ref{crit1} is satisfied.

\begin{lem}\label{done-sonc}
If $n\ge 5$ and $\Gamma\subset\SO(n,\BC)$ is a lattice, then $\cdm\Gamma=\vcd(\Gamma)$.\qed
\end{lem}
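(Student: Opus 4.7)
The strategy is to apply Corollary \ref{crit1} directly: it suffices to exhibit the strict inequality $\dim S^A < \vcd(\Gamma)$ for every non-central finite order element $A \in \Gamma$. Both ingredients needed are already assembled in the surrounding text, so the plan is essentially to collect them and compare.

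First I will reuse the upper bound on fixed point sets recorded just above. Since $A$ has finite order it is conjugate into the maximal compact subgroup $\SO_n$; Corollary \ref{complexification} then gives $\dim S^A = \dim C_{\SO_n}(A)$, which by Lemma \ref{centralizer-SO} and formula \eqref{eq-copen1} equals $\sum_i d_i^2 + \frac{s(s-1)}{2} + \frac{t(t-1)}{2}$ for some partition $n = 2d_1 + \cdots + 2d_r + s + t$. Non-centrality of $A$ excludes the partitions $s = n$ and $t = n$, and an elementary convexity argument then shows this expression is maximized at $r = 0$, $\{s,t\} = \{1, n-1\}$, giving
\[
\dim S^A \;\leq\; \frac{(n-1)(n-2)}{2}.
\]

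Second, I will invoke the lower bound $\vcd(\Gamma) \geq \frac{n(n-1)}{2} - \lfloor n/2 \rfloor$ from Proposition \ref{vcd lower bound}, which itself dominates $\frac{n(n-2)}{2}$ since $\lfloor n/2 \rfloor \leq n/2$. A one-line subtraction gives $\frac{n(n-2)}{2} - \frac{(n-1)(n-2)}{2} = \frac{n-2}{2}$, strictly positive for $n \geq 5$. Hence $\dim S^A < \vcd(\Gamma)$ for every non-central finite order $A \in \Gamma$, and Corollary \ref{crit1} delivers $\cdm(\Gamma) = \vcd(\Gamma)$.

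There is no genuine obstacle here. The only nontrivial step is the combinatorial maximization underlying the upper bound on $\dim C_{\SO_n}(A)$: one checks that absorbing any $\UU_{d_i}$-block of real dimension $2d_i$ into the largest available $\OO$-block strictly increases the dimension (because $\frac{(s+2d_i)(s+2d_i-1)}{2} - \frac{s(s-1)}{2} - d_i^2 = d_i(2s + d_i - 1) \geq 0$), and that the function $s \mapsto \binom{s}{2} + \binom{n-s}{2}$ is convex, so it attains its maximum over $1 \leq s \leq n-1$ at the endpoints. The hypothesis $n \geq 5$ enters only in making the final gap $(n-2)/2$ strictly positive after rounding down $n/2$.
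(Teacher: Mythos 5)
Your proof follows the same route as the paper: bound $\dim S^A$ via Corollary \ref{complexification}, Lemma \ref{centralizer-SO} and formula \eqref{eq-copen1}, compare against the lower bound for $\vcd(\Gamma)$ from Proposition \ref{vcd lower bound}, and invoke Corollary \ref{crit1}. The main body is correct and the comparison $\frac{n(n-2)}{2} - \frac{(n-1)(n-2)}{2} = \frac{n-2}{2} > 0$ is exactly the paper's calculation.

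Two remarks on your closing paragraph, though. First, the absorption argument you sketch for the combinatorial maximization is not quite airtight: if $r\ge 1$ and one of the orthogonal blocks is empty (say $t=0$), absorbing all the $\UU_{d_i}$-blocks into the larger remaining $\OO$-block lands you on the partition $(s,t)=(n,0)$, which corresponds to the \emph{central} element and gives $\binom{n}{2}$ — a valid but useless upper bound. So the reduction from $r\ge 1$ to the case $r=0,\,1\le s\le n-1$ needs a separate check. Concretely, for $r\ge 1$ and $t=0$ one has $\sum d_i^2 + \binom{n-2D}{2}\le D^2+\binom{n-2D}{2}$ with $D=\sum d_i\ge 1$, and the inequality $D^2+\binom{n-2D}{2}\le\binom{n-1}{2}$ is equivalent to $3D^2+(1-2n)D+n-1\le 0$, which holds for all $1\le D\le n/2$ precisely when $n\ge 5$ (it fails at $n=4$, $D=2$, where the maximum is $4>\binom{3}{2}$). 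Second, your explanation of where the hypothesis $n\ge 5$ enters is off: the gap $(n-2)/2$ is already strictly positive for all $n\ge 3$, independently of the rounding. What genuinely requires $n\ge 5$ is the maximization step itself — for $n=4$ the maximum of $\dim C_{\SO_n}(A)$ over non-central $A$ is attained at $r=1$, $d_1=2$, not at $r=0$, $\{s,t\}=\{1,n-1\}$, so the bound $\dim S^A\le\binom{n-1}{2}$ breaks. This case is of course excluded since $\SO(4,\BC)$ is not simple, but it is the real reason for the hypothesis.
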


\subsection{Complex symplectic group $\Sp(2n,\BC)$}
Once more we have that every finite order element $A\in\Sp(2n,\BC)$ can be conjugated into the maximal compact group $\Sp_n$, and again we get from Corollary \ref{complexification} that
$$\dim S^A=\dim C_{\Sp_n}(A)$$
where $S=\Sp(2n,\BC)/\Sp_n$ is the symmetric space of $\Sp(2n,\BC)$. From Lemma \ref{centralizer-Sp} we get that $C_{\Sp_n}(A)$ is isomorphic to a group of the form 
$$\UU_{d_1}\times\dots\times\UU_{d_r}\times\Sp_s\times\Sp_t$$
where $s$ and $t$ are respectively the dimensions of the $\pm 1$-eigenspaces and where $n=d_1+\dots+d_r+s+t$. Moreover,   $A$ is non-central if and only if $s,t\neq n$.
Computing dimensions we get that
$$\dim C_{\Sp_n}(A)=\sum_{i=1}^r d_i^2+2s^2+s+2t^2+t$$
Now, assuming that $A$ is non-central, it is not hard to check that this expression is maximal if $r=0$, $s=n-1$ and $t=1$, meaning that 
$$\dim S^A\le 2n^2-3n+4$$
From Proposition \ref{vcd lower bound} we get the lower bound $\vcd(\Gamma)\ge 2n^2$ for any lattice $\Gamma\in\Sp(2n,\BC)$, meaning that again Corollary \ref{crit1} applies and yields the following.

\begin{lem}\label{done-sp2nc}
If $n\ge 2$ and $\Gamma\subset\Sp(2n,\BC)$ is a lattice, then $\vcd(\Gamma)=\cdm\Gamma$.\qed
\end{lem}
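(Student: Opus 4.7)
The plan is to invoke Corollary \ref{crit1}: it will suffice to check that $\dim(S^A) < \vcd(\Gamma)$ for every non-central finite-order element $A\in\Gamma$. The virtual cohomological dimension side is handled by Proposition \ref{vcd lower bound}, which gives $\vcd(\Gamma)\ge 2n^2$. The work therefore lies in producing a sharp upper bound on $\dim(S^A)$ valid for every non-central finite-order $A$.

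First I will turn the geometric quantity $\dim(S^A)$ into an algebraic one. Since $\Sp(2n,\BC)$ is the group of complex points of the semisimple algebraic group $\mathbf{Sp}_{2n}$ and $\Sp_n\subset\Sp(2n,\BC)$ is a maximal compact subgroup, Corollary \ref{complexification} gives $\dim(S^A)=\dim C_{\Sp_n}(A)$ after conjugating $A$ into $\Sp_n$ (which is possible because $A$ has finite order). Lemma \ref{centralizer-Sp} then describes
$$C_{\Sp_n}(A)\cong \UU_{d_1}\times\cdots\times\UU_{d_r}\times\Sp_s\times\Sp_t,\qquad d_1+\cdots+d_r+s+t=n,$$
so that
$$\dim C_{\Sp_n}(A)=\sum_{i=1}^r d_i^2+2s^2+s+2t^2+t.$$
The center of $\Sp_n$ is $\{\pm I\}$, so $A$ being non-central is equivalent to $s\neq n$ and $t\neq n$.

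The technical core is to maximize this expression over all admissible block data. I expect the maximum to occur at $r=0$ with $\{s,t\}=\{1,n-1\}$: moving mass out of $\UU$-factors into $\Sp$-factors strictly increases the expression because the coefficient $2$ in front of $s^2$ and $t^2$ beats the coefficient $1$ in $d_i^2$, and aggregating blocks of the same type also increases the quadratic form; on the other hand, pushing all mass into a single $\Sp_n$ block is forbidden by the non-centrality constraint. This should yield
$$\dim(S^A)\le 2(n-1)^2+(n-1)+2+1=2n^2-3n+4.$$

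To conclude, I compare the two bounds: $2n^2-3n+4<2n^2$ exactly when $3n>4$, which holds for every integer $n\ge 2$. Hence the hypothesis of Corollary \ref{crit1} is satisfied and $\cdm(\Gamma)=\vcd(\Gamma)$. The only step where care is needed is the maximization argument, but since the parameter space is small and the monotonicity properties are elementary, I do not anticipate this being a serious obstacle.
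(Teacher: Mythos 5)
Your proposal is correct and follows exactly the same route as the paper: conjugate $A$ into $\Sp_n$, apply Corollary \ref{complexification} and Lemma \ref{centralizer-Sp} to get $\dim S^A=\sum d_i^2+2s^2+s+2t^2+t$, maximize at $r=0$, $\{s,t\}=\{1,n-1\}$ to obtain $\dim S^A\le 2n^2-3n+4<2n^2\le\vcd(\Gamma)$, and invoke Corollary \ref{crit1}. Your monotonicity justification for the maximization (shifting mass from $\UU$- to $\Sp$-factors, then pushing to the endpoint of $\{s+t=n,\,1\le s\le n-1\}$) is a slightly more explicit version of the paper's "it is not hard to check" and is sound.
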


\subsection{Quaternionic special linear group $\SL(n,\BH)$}\label{quaternionic} 
As always we can conjugate every finite order element $A\in\SL(n,\BH)$ into the maximal compact subgroup $\Sp_n$. As mentioned after Lemma \ref{centralizer-Sp} we have 
$$C_{\GL(n,\BH)}(A)\simeq\GL(d_1,\BC)\times\dots\times\GL(d_r,\BC)\times\GL(s,\BH)\times\GL(t,\BH)$$
for each $A\in\Sp_n$. Note also that $A$ is non-central if and only if $s,t\neq n$. From this, Proposition \ref{cartan} and Lemma \ref{centralizer-Sp} we obtain that
$$\dim S^A\le d_1^2+\dots+d_r^2+2s^2-s+2t^2-t-1$$
where $S=\SL(n,\BH)/\Sp_n$ is the relevant symmetric space. This quantity is maximal if $r=0$ and $(s,t)=(1,n-1)$ or $(s,t)=(n-1,1)$, meaning that
$$\dim S^A\le 2n^2-5n+3.$$
On the other hand Proposition \ref{vcd lower bound} yields the lower bound $\vcd(\Gamma)\ge 2n^2-2n$ for any lattice $\Gamma\in\SL(n,\BH)$, which means that again Corollary \ref{crit1} implies the following

\begin{lem}\label{done-slnh}
For every lattice $\Gamma$ in $\SL(n,\BH)$ we have $\vcd(\Gamma)=\cdm\Gamma$.
\end{lem}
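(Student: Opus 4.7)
The plan is to verify the hypothesis of Corollary \ref{crit1}, i.e.\ that $\dim S^A < \vcd(\Gamma)$ for every non-central finite-order element $A \in \Gamma$. The case $n = 2$ is already settled by Corollary \ref{case0}, since $\SL(2,\BH)$ has real rank one, so from now on I may assume $n \ge 3$.

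First I would conjugate $A$ into the maximal compact subgroup $K = \Sp_n$ of $G = \SL(n,\BH)$, which is legitimate because $A$ has finite order. Proposition \ref{cartan} then reduces the problem to computing
\[
\dim S^A = \dim C_G(A) - \dim C_K(A).
\]
The eigenspace description of $C_{\Sp_n}(A)$ in Lemma \ref{centralizer-Sp}, together with the parallel description of $C_{\GL(n,\BH)}(A)$ recorded immediately after that lemma, gives block-diagonal presentations of both centralizers in terms of parameters $d_1,\dots,d_r$ and $s,t$, with $s,t$ the dimensions of the $\pm 1$-eigenspaces and $d_1+\cdots+d_r+s+t=n$. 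Subtracting dimensions, and accounting for the one-dimensional loss from the unit determinant condition, yields the bound
\[
\dim S^A \le d_1^2+\cdots+d_r^2+2s^2-s+2t^2-t-1.
\]

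A routine optimization of this expression over non-central $A$ (equivalently, over $(s,t) \ne (n,0),(0,n)$) shows that the maximum is attained with $r=0$ and $\{s,t\} = \{1,n-1\}$, yielding $\dim S^A \le 2n^2-5n+3$. On the other hand, Proposition \ref{vcd lower bound} supplies $\vcd(\Gamma) \ge 2n^2-2n$, and the elementary inequality $2n^2-5n+3 < 2n^2-2n$ holds for all $n \ge 2$. Corollary \ref{crit1} then delivers $\cdm(\Gamma) = \vcd(\Gamma)$. The only mildly delicate step is verifying that concentrating all the ``mass'' of $A$ in the $\pm 1$-eigenspace decomposition really does maximize the dimension bound, but this follows from the convexity of $m \mapsto 2m^2 - m$ together with the fact that $2m^2 - m \ge m^2$ for $m \ge 1$, so any complex eigenspace block $d_i \ge 1$ can be absorbed into an $s$- or $t$-block without decreasing the expression.
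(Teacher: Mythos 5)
Your argument matches the paper's essentially step for step: conjugate into $\Sp_n$, apply Proposition \ref{cartan} with the centralizer descriptions from Lemma \ref{centralizer-Sp} and the remark following it, maximize the resulting quadratic bound to get $\dim S^A \le 2n^2-5n+3$, and compare against the lower bound $\vcd(\Gamma) \ge 2n^2-2n$ from Proposition \ref{vcd lower bound} to invoke Corollary \ref{crit1}. Your added note on the $n=2$ case via Corollary \ref{case0} is a harmless extra precaution; otherwise the proof is the same as the paper's.
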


\subsection{Indefinite unitary group $\SU(p,q)$}
Any finite order element $A\in\SU(p,q)$ can be conjugated into the maximal compact subgroup $S(\UU_p\times\UU_q)$, so we can assume that $A$ in contained in $S(\UU_p\times\UU_q)$ from the start. Note that if $\lambda$ is an eigenvalue of $A$, then for the eigenspace $E_\lambda$ we have
$$E_\lambda=(E_\lambda\cap(\BC^p\times\{0\}))\oplus(E_\lambda\cap(\{0\}\times\BC^q)).$$
This means that the hermitian form $(v,w)\mapsto\bar v^tQ_{p,q}w$ induces a hermitian form on $E_\lambda$ of signature $(\dim_\BC E_\lambda\cap(\BC^p\times\{0\}),\dim_\BC E_\lambda\cap(\{0\}\times\BC^q))$. This latter form is preserved by any element in $\SU(p,q)$ preserving $E_\lambda$, and hence in particular by the centralizer of $A$. Taking all  this together we deduce that, up to conjugation,
$$C_{\SU(p,q)}(A)=S(\UU(p_1,q_1)\times\dots\times\UU(p_r,q_r))).$$
where $p=\sum p_i$ and $q=\sum q_i$. Together with Lemma \ref{centralizer-SU} this implies that
$$\dim S^A\le \sum_{i=1}^r 2p_iq_i$$
where $S=\SU(p,q)/S(\UU_p\times\UU_q)$. If $A$ is non-central we have that $r\ge 2$ and hence 
$$\dim S^A\le 2p(q-1).$$
Since we have by Proposition \ref{vcd lower bound} that $\vcd(\Gamma)\ge 2pq-p$ for any lattice $\Gamma \subset \SU(p,q)$, we get the following from Corollary \ref{crit1}.

\begin{lem}\label{done-supq}
Let $p \ge 2$. If $\Gamma$ is a lattice in $\SU(p,q)$ then $\vcd(\Gamma)=\cdm\Gamma$.\qed
\end{lem}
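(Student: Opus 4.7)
The plan is to verify the hypothesis of Corollary \ref{crit1}, namely that $\dim S^A < \vcd(\Gamma)$ for every non-central finite order element $A \in \Gamma$. Once that inequality is in hand, the conclusion $\vcd(\Gamma) = \cdm(\Gamma)$ is immediate.

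The preceding analysis already supplies the required bound on centralizers. After conjugating $A$ into the maximal compact subgroup $K = S(\UU_p \times \UU_q)$, the decomposition of $\BC^{p+q}$ into the eigenspaces of $A$ (each of which is orthogonal with respect to the hermitian form of signature $(p,q)$ and hence inherits a signature) shows that $C_{\SU(p,q)}(A)$ is conjugate to $S(\UU(p_1,q_1) \times \dots \times \UU(p_r,q_r))$ with $\sum p_i = p$ and $\sum q_i = q$. Combining this with Proposition \ref{cartan} yields $\dim S^A \le \sum 2 p_i q_i$. Since $A$ is non-central we must have $r \ge 2$, and the maximum of $\sum 2 p_i q_i$ over such partitions of $(p,q)$ is $2p(q-1)$, attained for instance when one block is $(0,1)$ and the other is $(p, q-1)$.

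To finish, compare this with the lower bound $\vcd(\Gamma) \ge 2pq - p$ supplied by Proposition \ref{vcd lower bound}. The assumption $p \ge 2$ gives $2p > p$, hence
\[
\dim S^A \;\le\; 2pq - 2p \;<\; 2pq - p \;\le\; \vcd(\Gamma),
\]
and Corollary \ref{crit1} applies. The hypothesis $p \ge 2$ is essential only to exclude the rank-one case $\SU(1,q)$, which is already handled by Corollary \ref{case0}; there is no genuine obstacle beyond identifying the extremal partition that achieves the centralizer bound.
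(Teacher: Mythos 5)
Your proof is correct and follows essentially the same route as the paper: conjugate $A$ into $S(\UU_p\times\UU_q)$, use the eigenspace decomposition and Lemma \ref{centralizer-SU} to identify $C_{\SU(p,q)}(A)$ as $S(\UU(p_1,q_1)\times\dots\times\UU(p_r,q_r))$, bound $\dim S^A\le 2p(q-1)$ for non-central $A$, and compare with the lower bound $\vcd(\Gamma)\ge 2pq-p$ from Proposition \ref{vcd lower bound} to invoke Corollary \ref{crit1}. Your closing observation about the role of the hypothesis $p\ge2$ is also accurate.
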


\subsection{Indefinite quaternionic unitary group $\Sp(p,q)$}
Arguing as usual, note that any finite order $A\in\Sp(p,q)\subset\GL(2p+2q,\BC)$ can be conjugated into the maximal compact subgroup $\Sp_p\times\Sp_q\subset\UU_{2p}\times\UU_{2q}$. So we can assume that $A$ is contained in $\Sp_p\times\Sp_q\subset\UU_{2p}\times\UU_{2q}$ to begin with.  As in the case of $\SU(p,q)$ note that if $\lambda\in\BC$ is an eigenvalue of $A$, then for the eigenspace $V_\lambda$ as in \eqref{eq-quater-eigen} we have
$$V_\lambda=(V_\lambda\cap(\BH^{p}\times\{0\}))\oplus(V_\lambda\cap(\{0\}\times\BH^{q})).$$
This again implies that the hermitian form $(v,w)\mapsto\bar v^tQ_{p,q}w$ induces a hermitian form on $V_\lambda$ of signature $$(\dim_\BH V_\lambda\cap(\BH^p\times\{0\}),\dim_\BH V_\lambda\cap(\{0\}\times\BH^q)).$$ This latter form is preserved by any element in $\Sp(p,q)$ preserving $V_\lambda$, and hence in particular by the centralizer of $A$. Taking all of this together and using the same notation as in \eqref{blabla}, we have that $C_{\Sp(p,q)}(A)$ is conjugate to
$$C_{\Sp(p,q)}(A)=C_{\Sp(p_1,q_1)}(\Lambda_1)\times\dots\times C_{\Sp(p_r,q_r)}(\Lambda_r)\times\Sp(p_s,q_s)\times\Sp(p_t,q_t)$$
where 
\begin{itemize}
\item $\lambda_1,\bar\lambda_1,\dots,\lambda_r,\bar\lambda_r$ are the complex non-real eigenvalues of $A$,
\item $p=p_s+p_t+\sum p_i$ and $q=q_s+q_t+\sum q_i$. 
\end{itemize}
Compare with Lemma \ref{centralizer-Sp}. Taking into account that
\begin{align*}
C_{\Sp(p_i,q_i)}(\Lambda_i)
   &=\Sp(p_i,q_i)\cap C_{\GL(p_i+q_i,\BH)}(\Lambda_i)\\
   &=\Sp(p_i,q_i)\cap \GL(p_i+q_i,\BC)\\
   &=\SU(p_i,q_i)
\end{align*}
we have that
$$C_{\Sp(p,q)}(A)=\SU(p_1,q_1)\times\dots\times \SU(p_r,q_r)\times\Sp(p_s,q_s)\times\Sp(p_t,q_t)$$
meaning that
$$\dim S^A\le4p_sq_s+4p_tq_t+\sum_{i=1}^r 2p_iq_i$$
where $S=\Sp(p,q)/\Sp_p\times\Sp_q$. If $A$ is non-central we have that $p_s+q_s \neq p+q $ and $p_t+q_t \neq p+q$ and hence that
$$\dim S^A\le 4p(q-1).$$
Since we have by Proposition \ref{vcd lower bound} that $\vcd(\Gamma)\ge 4pq-p$ for any lattice $\Gamma\subset \Sp(p,q)$, we get the following from Corollary \ref{crit1}.

\begin{lem}\label{done-sppq}
Let $p\ge 2$. If  $\Gamma$ is a lattice in $\Sp(p,q)$ then $\vcd(\Gamma)=\cdm\Gamma$.\qed
\end{lem}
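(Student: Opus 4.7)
The strategy is to apply Corollary \ref{crit1}, which reduces the claim to showing that $\dim S^A < \vcd(\Gamma)$ for every non-central finite order element $A \in \Gamma$. Here $S = \Sp(p,q)/(\Sp_p \times \Sp_q)$ is the symmetric space. All the ingredients have essentially been assembled in the preceding discussion; the proof amounts to packaging them correctly.

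First, I would reduce to a computation inside the maximal compact subgroup: since $A$ has finite order, it is conjugate into $\Sp_p \times \Sp_q$, so without loss of generality $A \in \Sp_p \times \Sp_q \subset \UU_{2p} \times \UU_{2q}$. For each eigenvalue $\lambda$ of $A$ viewed in $\UU_{2(p+q)}$, the quaternionic eigenspace $V_\lambda = E_\lambda + E_{\bar\lambda}$ from \eqref{eq-quater-eigen} respects the product decomposition $\BH^{p+q} = \BH^p \oplus \BH^q$, and hence the hermitian form $(v,w) \mapsto \bar v^t Q_{p,q} w$ restricts to a non-degenerate form on $V_\lambda$ of some signature $(p_\lambda, q_\lambda)$. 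Combining this with Lemma \ref{centralizer-Sp}, and noting that $C_{\Sp(p_i,q_i)}(\Lambda_i) = \Sp(p_i,q_i) \cap \GL(p_i+q_i,\BC) = \SU(p_i,q_i)$ for the complex (non-real) eigenvalues $\lambda_i$, we obtain a decomposition
\[
C_{\Sp(p,q)}(A) \cong \SU(p_1,q_1) \times \cdots \times \SU(p_r,q_r) \times \Sp(p_s,q_s) \times \Sp(p_t,q_t),
\]
where the $\SU$-factors correspond to the non-real eigenvalue pairs and the two $\Sp$-factors to the $\pm 1$-eigenspaces.

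Next, I would apply Proposition \ref{cartan} together with the dimension data of Table \ref{Tabla2} to bound
\[
\dim S^A \le 4 p_s q_s + 4 p_t q_t + \sum_{i=1}^r 2 p_i q_i.
\]
The non-centrality of $A$ translates into the constraint that neither the $+1$- nor the $-1$-eigenspace is all of $\BH^{p+q}$, i.e.\ $p_s + q_s < p + q$ and $p_t + q_t < p + q$. A simple optimization under the side constraints $\sum p_* = p$ and $\sum q_* = q$ (with all $p_*, q_* \ge 0$) shows the maximum is attained by degenerating all but one block, yielding $\dim S^A \le 4 p (q - 1)$.

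Finally, Proposition \ref{vcd lower bound} gives $\vcd(\Gamma) \ge 4 p q - p$ for any lattice $\Gamma \subset \Sp(p,q)$. Since
\[
\dim S^A \le 4p(q-1) = 4pq - 4p < 4pq - p \le \vcd(\Gamma)
\]
(strict inequality uses $p \ge 1$, and in particular holds under the hypothesis $p \ge 2$), Corollary \ref{crit1} yields $\vcd(\Gamma) = \cdm(\Gamma)$. The only mildly delicate point is verifying the centralizer identity $C_{\Sp(p_i,q_i)}(\Lambda_i) = \SU(p_i,q_i)$ on the non-real eigenvalue blocks, but this is already recorded in Section \ref{sec:compact}; everything else is bookkeeping.
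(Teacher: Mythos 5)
Your argument is correct and follows the paper's proof essentially step for step: conjugate $A$ into $\Sp_p\times\Sp_q$, decompose the centralizer as $\SU(p_1,q_1)\times\dots\times\SU(p_r,q_r)\times\Sp(p_s,q_s)\times\Sp(p_t,q_t)$ via the quaternionic eigenspaces $V_\lambda$ and Lemma~\ref{centralizer-Sp}, bound $\dim S^A\le 4p(q-1)$, compare with $\vcd(\Gamma)\ge 4pq-p$ from Proposition~\ref{vcd lower bound}, and invoke Corollary~\ref{crit1}. The only cosmetic difference is that you explicitly cite Proposition~\ref{cartan} and Table~\ref{Tabla2} for the dimension bound, which the paper uses implicitly.
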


\subsection{Real symplectic group $\Sp(2n,\BR)$}\label{realsymplectic}
The maximal compact subgroup of $\Sp(2n,\BR)$ is $\UU_n$, which arises by first identifying $\BR^{2n}$ with $\BC^n$ and then setting
\begin{equation}\label{eq-copen4}
\UU_n=\Sp(2n,\BR)\cap\GL(n,\BC).
\end{equation}
This means that the eigenspaces of any $A\in\UU_n$ are complex and hence symplectic. Any such $A$ can be conjugated to have the form 
\begin{equation}\label{eq-copen3}
A=\left(
\begin{array}{ccc}
\lambda_1\Id_{d_1} & & \\
& \ddots & \\
& & \lambda_r\Id_{d_r}\end{array}
\right)
\end{equation}
where $\Id_{d_k}$ is a $d_k$-by-$d_k$ complex identity matrix and $\lambda_j\neq\lambda_k$ for all $j\neq k$. Note that $n=\sum_{i=1}^rd_i$. From here we get that
\begin{equation}\label{boundcentsymplectic}
C_{\Sp(2n,\BR)}(A)=C_{\Sp(2d_1,\BR)}(\lambda_1\Id_{d_1})\times\dots\times C_{\Sp(2d_r,\BR)}(\lambda_r\Id_{d_r}).
\end{equation}
Now, observing that $C_{\GL(2d_i,\BR)}(\lambda_i\Id_{d_i})=\GL(d_i,\BC)$ whenever $\lambda_i\notin\BR$ we get from \eqref{eq-copen4} that $C_{\Sp(2d_i,\BR)}(\lambda_i\Id_{d_i})\subset\UU_{d_i}$. This means that the only non-compact factors in $C_{\Sp(2n,\BR)}(A)$ correspond to the eigenvalues $\pm 1$. Altogether we have that 
\begin{equation}\label{eq-copen6}
\dim S^A=t^2+t+s^2+s
\end{equation}
where $t$ and $s$ are the complex dimensions of the $\pm 1$-eigenspaces and where $S=\Sp(2n,\BR)/\UU_n$. It follows that for any $A\in\Sp(2n,\BR)$ of finite order and non-central, i.e. if both $s,t<n$, this quantity is maximized if and only if either $s$ or $t$ is equal to $n-1$ and the other to $1$. In other words, we have that
$$\dim S^A\le n^2-n+2.$$
Since Proposition \ref{vcd lower bound} yields $\vcd(\Gamma)\ge n^2$, we obtain the following from Corollary \ref{crit1}.

\begin{lem}\label{done-sp2nr}
If $n\ge 3$ and $\Gamma\subset\Sp(2n,\BR)$ is a lattice, then $\vcd(\Gamma)=\cdm\Gamma$.\qed
\end{lem}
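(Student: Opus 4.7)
The plan is to apply Corollary \ref{crit1} directly, using the fixed-point dimension bound that was essentially assembled in the paragraph immediately preceding the lemma. First I would observe that since the maximal compact subgroup $K=\UU_n$ of $G=\Sp(2n,\BR)$ arises as $\Sp(2n,\BR)\cap\GL(n,\BC)$, any finite order element $A\in\Gamma$ may be conjugated into $\UU_n$ and, after a further conjugation in $\UU_n$, brought to the block-diagonal form \eqref{eq-copen3}. Because the eigenspaces of such an $A$ are complex subspaces of $\BC^n$, each is automatically symplectic, and the centralizer splits as in \eqref{boundcentsymplectic}.

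Next, I would use the identification $C_{\GL(2d_i,\BR)}(\lambda_i\Id_{d_i})=\GL(d_i,\BC)$ for $\lambda_i\notin\BR$, together with \eqref{eq-copen4}, to show that each factor $C_{\Sp(2d_i,\BR)}(\lambda_i\Id_{d_i})$ corresponding to a non-real eigenvalue is already compact (contained in $\UU_{d_i}$). Consequently, only the $\pm1$-eigenspaces contribute to $\dim S^A-\dim C_K(A)$, yielding the closed formula
\[
\dim S^A \;=\; s^2+s+t^2+t
\]
from \eqref{eq-copen6}, where $s$ and $t$ are the complex dimensions of the $+1$- and $-1$-eigenspaces. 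The non-centrality of $A$ forces $s,t<n$, and an elementary optimization (treating $s^2+s+t^2+t$ subject to $s+t\le n$, $s,t<n$) shows the maximum is attained at $\{s,t\}=\{1,n-1\}$, giving $\dim S^A\le n^2-n+2$.

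Finally, I would invoke the lower bound $\vcd(\Gamma)\ge n^2$ supplied by Proposition \ref{vcd lower bound}, and compare: $n^2-n+2<n^2$ exactly when $n>2$. Hence, under the hypothesis $n\ge 3$, every non-central finite-order element $A\in\Gamma$ satisfies $\dim S^A<\vcd(\Gamma)$, so Corollary \ref{crit1} immediately gives $\cdm(\Gamma)=\vcd(\Gamma)$.

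There is no serious obstacle here: all the hard work (the spectral/centralizer analysis in Section \ref{sec:compact} and the dimension optimization) has already been done, and the argument reduces to the strict numerical inequality $n^2-n+2<n^2$. The restriction $n\ge 3$ is sharp for this approach, since for $n=2$ the two quantities coincide and Corollary \ref{crit1} fails to apply; that borderline case is presumably absorbed elsewhere via the isogeny $\Sp(4,\BR)\sim\SO(2,3)$ or by Corollary \ref{case0} for $n=1$.
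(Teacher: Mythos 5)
Your proposal is correct and follows exactly the paper's argument in the paragraph preceding the lemma: the reduction to $\UU_n$, the centralizer decomposition \eqref{boundcentsymplectic}, the compactness of the factors for non-real eigenvalues leading to $\dim S^A = s^2+s+t^2+t$ from \eqref{eq-copen6}, the optimization giving $\dim S^A\le n^2-n+2$, and the comparison with $\vcd(\Gamma)\ge n^2$ via Corollary \ref{crit1}. Your closing observation about why $n\ge 3$ is needed and how $n=1,2$ are handled elsewhere also matches the paper.
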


\subsection{The group $\SO^*(2n)$}
Recall that 
$$\SO^*(2n)=\{A\in\SU(n,n)\vert A^tQ_{n,n}J_nA=Q_{n,n}J_n\}$$
has as maximal compact subgroup
$$\left\{
\left(
\begin{array}{cc}
A  &  \\
  & \bar A
\end{array}
\right)\middle| A\in\UU_n\right\}\simeq\UU_n.$$
Similar considerations as before imply that if $A\in\UU_n$ has complex non-real eigenvalues $\lambda_1,\bar\lambda_1,\dots,\lambda_r,\bar\lambda_r$ then
$$C_{\SO^*(2n)}(A)\simeq\SO^*(2d_1)\times\dots\times\SO^*(2d_r)\times\SO^*(s)\times\SO^*(t)$$
where $d_i=\dim_\BC E_{\lambda_i}$ is the complex dimension of the eigenspace corresponding to the eigenvalue $\lambda_i$ and where $s$ (resp. $t$) is the dimension of the eigenspaces corresponding to the eigenvalue $1$ (resp. $-1$). This means that 
$$\dim S^A=-n+\frac{s^2}4+\frac{t^2}4+\sum_{i=1}^rd_i^2$$
where this time $S=\SO^*(2n)/\UU_n$. As usual this is maximized (for non-central $A$) by a sum with as few factors as possible, meaning that 
$$\dim S^A\le 1+(n-1)^2-n=n^2-n-2(n-1).$$
Since $n\ge 2$ by assumption, and since Proposition \ref{vcd lower bound} yields $\vcd(\Gamma)\ge n^2-n-\frac n2$, we get the following from Corollary \ref{crit1}.

\begin{lem}\label{done-noname}
For every lattice $\Gamma$ in $\SO^*(2n)$ we have $\vcd(\Gamma)=\cdm\Gamma$.
\end{lem}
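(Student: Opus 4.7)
The plan is to reduce to Corollary \ref{crit1}, which will reduce the lemma to verifying that every non-central finite-order element $A\in\Gamma$ satisfies $\dim S^A<\vcd(\Gamma)$, where $S=\SO^*(2n)/\UU_n$.

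I would start by conjugating $A$ into the maximal compact subgroup, realized as $\UU_n$ sitting inside $\SU(n,n)$ via the diagonal embedding $A\mapsto \mathrm{diag}(A,\bar A)$, and then decomposing $\BC^n$ into the eigenspaces of $A$. Because $A$ is compatible with the $\SO^*$-structure, its non-real eigenvalues will come in conjugate pairs $(\lambda_i,\bar\lambda_i)$; let $d_i$ denote the complex dimension of the $\lambda_i$-eigenspace and let $s,t$ denote the (real) dimensions of the $\pm 1$-eigenspaces.

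The key step, parallel to the earlier treatment of $\Sp(p,q)$ in Section \ref{sec:easy}, would be to track how the defining form $A^tQ_{n,n}J_nA=Q_{n,n}J_n$ interacts with this eigenspace decomposition in order to identify the centralizer as
$$C_{\SO^*(2n)}(A)\simeq \SO^*(2d_1)\times\cdots\times\SO^*(2d_r)\times\SO^*(s)\times\SO^*(t).$$
Proposition \ref{cartan} would then yield
$$\dim S^A=-n+\frac{s^2}{4}+\frac{t^2}{4}+\sum_{i=1}^r d_i^2,$$
after subtracting the dimensions of the maximal compact subgroups of the factors (the $\UU_{d_i}$ inside $\SO^*(2d_i)$ and the $\UU_{s/2},\UU_{t/2}$ inside $\SO^*(s),\SO^*(t)$), which together total $n$.

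The final step is an elementary maximization of this sum of squares under the constraint $2\sum d_i+s+t=2n$ and the non-centrality of $A$, which forces both $s<n$ and $t<n$ and hence the appearance of at least two blocks. Concentrating as much mass as possible into a single block produces the extremum $\dim S^A\le 1+(n-1)^2-n=n^2-3n+2$. Comparing with the lower bound $\vcd(\Gamma)\ge n^2-n-\left[\frac n2\right]$ from Proposition \ref{vcd lower bound}, one checks that $n^2-3n+2<n^2-n-\left[\frac n2\right]$ holds for all $n\ge 4$; the cases $n=2,3$ correspond to $\SO^*(4)$ and $\SO^*(6)$, which have real rank one and are already handled by Corollary \ref{case0}. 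The only genuine obstacle is justifying the centralizer identification in the third step — everything else is routine bookkeeping — so I would spend the bulk of the effort carefully adapting the $\Sp(p,q)$ analysis, keeping track of signs, conjugations, and the interplay between $J_n$ and $Q_{n,n}$.
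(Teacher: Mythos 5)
Your proposal matches the paper's proof: identify $C_{\SO^*(2n)}(A)$ as a product of smaller $\SO^*$ factors, use Proposition \ref{cartan} to get $\dim S^A=-n+\frac{s^2}{4}+\frac{t^2}{4}+\sum d_i^2$, maximize subject to non-centrality, and compare with the lower bound from Proposition \ref{vcd lower bound} to invoke Corollary \ref{crit1}. The only small remark is that the inequality $n^2-3n+2<n^2-n-\left[\frac{n}{2}\right]$ in fact holds for all $n\ge 2$, so your extra caveat routing $n=2,3$ through Corollary \ref{case0} is harmless but not needed.
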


%
%
%
%

\section{Real special linear group $\SL(n,\BR)$}\label{sec:slnr}
After the results in the previous section there are two families of classical groups to be dealt with: $\SL(n,\BR)$ and $\SO(p,q)$. Since the discussion is longer for these two classes of groups, we treat them in different sections. We prove now that $\cdm(\Gamma)=\vcd(\Gamma)$ for any lattice $\Gamma$ in $\SL(n,\BR)$ for $n\ge 3$. Note that all these lattices are arithmetic because $\rank_\BR\ge 2$.
\medskip

We start in the same way that we did in each of the particular cases in Section \ref{sec:easy}. Every $A\in\SL(n,\BR)$ that is non-central and has finite order can be conjugated into the maximal compact group $\SO_n$. Now, the same considerations as earlier yield that there are $d_t,d_s,d_1,\dots,d_r$ with $d_t+d_s+2d_1+\dots+2d_r=n$ such that 
$$C_{\SL(n,\BR)}(A)=S\left(\GL(d_t,\BR)\times\GL(d_s,\BR)\times\prod_{i=1}^r\GL(d_i,\BC)\right)$$
where $d_{\pm}$ is the dimension of the eigenspace of $A$ associated to the eigenvalue $\pm 1$; compare with Lemma \ref{centralizer-SO}. From here we get that
\begin{equation}\label{ivegotahangover}
\dim S^A=-1+\frac{d_t(d_t-1)}2+d_t+\frac{d_s(d_s-1)}2+d_s+\sum_{i=1}^rd_i^2
\end{equation}
where $S=\SL(n,\BR)/\SO_n$ is the relevant symmetric space. Moreover, since $A$ is non-central we have that $d_t,d_s\neq n$. This means that this formula takes its maximal values if $d_t=n-1$, $d_s=1$ and $r=0$. We have thus that
\begin{equation}\label{bad}
\dim S^A\le\frac{(n-1)(n-2)}2+n-1=\frac{n(n-1)}2
\end{equation}
with equality if and only if $A$ is conjugate to 
$$A\stackrel{\hbox{\tiny conj}}\simeq\left(
\begin{array}{cc}
-\Id_{n-1} &  \\
  & 1
\end{array}
\right)=Q_{n-1,1}$$
We deduce the following.

\begin{lem}\label{slkor1}
Let $\Gamma\subset\SL(n,\BR)$ be a lattice such that either $\Gamma$ does not contain any element conjugate to $Q_{n-1,1}$  or $\vcd(\Gamma)>\frac{n(n-1)}2$. Then $\cdm\Gamma=\vcd(\Gamma)$. 
\end{lem}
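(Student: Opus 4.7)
The plan is to recognize this lemma as a direct application of Corollary \ref{crit1} in both of the hypothesized cases, using the dimension bound \eqref{bad} that was just established and the lower bound on $\vcd(\Gamma)$ from Proposition \ref{vcd lower bound}.

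First I would recall what the preceding computation gave us: for every non-central finite-order element $A\in\Gamma\subset\SL(n,\BR)$, inequality \eqref{bad} yields
$$\dim S^A\le\frac{n(n-1)}2,$$
and moreover the discussion shows that equality is attained only when $A$ is conjugate to $Q_{n-1,1}$. At the same time, Proposition \ref{vcd lower bound} gives the unconditional lower bound $\vcd(\Gamma)\ge\frac{n(n-1)}2$ for any lattice $\Gamma\subset\SL(n,\BR)$.

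Next I would split into the two hypothesized cases. If $\Gamma$ contains no element conjugate to $Q_{n-1,1}$, then the inequality \eqref{bad} is strict for every non-central finite-order $A\in\Gamma$, so
$$\dim S^A<\tfrac{n(n-1)}2\le\vcd(\Gamma).$$
If instead $\vcd(\Gamma)>\frac{n(n-1)}2$, then \eqref{bad} gives directly
$$\dim S^A\le\tfrac{n(n-1)}2<\vcd(\Gamma)$$
for every non-central finite-order $A\in\Gamma$.

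In either case, the hypothesis of Corollary \ref{crit1} is satisfied, and we conclude $\cdm(\Gamma)=\vcd(\Gamma)$. There is no real obstacle here — all the work has already been done in the centralizer computation leading to \eqref{ivegotahangover}--\eqref{bad} and in the rank estimate of Proposition \ref{vcd lower bound}. The genuine difficulty, which presumably occupies the remainder of Section \ref{sec:slnr}, is the complementary case where $\Gamma$ does contain an element conjugate to $Q_{n-1,1}$ and simultaneously $\vcd(\Gamma)=\frac{n(n-1)}2$; there Corollary \ref{crit1} fails and one must instead verify condition~(2) of Proposition \ref{prop-coho1} via Corollary \ref{cor: cohom flat} or Proposition \ref{gen}.
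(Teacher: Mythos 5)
Your proof is correct and follows exactly the same route as the paper: bound $\dim S^A$ by \eqref{bad}, invoke the lower bound $\vcd(\Gamma)\ge\frac{n(n-1)}2$ from Proposition \ref{vcd lower bound}, observe that either hypothesis forces $\dim S^A<\vcd(\Gamma)$ for all non-central finite-order $A$, and apply Corollary \ref{crit1}. No gaps.
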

\begin{proof}
From \eqref{bad} we get that $\dim S^A\le\frac{n(n-1)}2$ for every $A\in\Gamma$ non-central. Moreover the inequality is strict if $A$ is not conjugate to $Q_{n-1,1}$. As for any lattice $\Gamma$ we have $\vcd(\Gamma)\ge\frac{n(n-1)}2$ by Proposition \ref{vcd lower bound}, we get under either condition in Lemma \ref{slkor1} that $\dim S^A<\vcd(\Gamma)$ for all $A\in\Gamma$ non-central and of finite order. The claim then follows from Corollary \ref{crit1}.
\end{proof}

We will apply Corollary \ref{cor: cohom flat} to deal with the case that $\vcd(\Gamma)=\frac{n(n-1)}2$, but first we need some preparatory work. As a first step, we bound the dimension of the intersection of distinct fixed point sets.
Our aim is to check that the hypothesis of Corollary \ref{cor: cohom flat} hold true so we only have to consider fixed point subsets in
$$\CS=\{S^H \ \vert\  H\in\CF\text{ non-central and } \nexists H'\in\CF\ \hbox{ non-central with}\ S^H\subsetneq S^{H'}\}.$$ 
Observe that for any $S^H\in\CS$ we have $S^H=S^A$ for some non-central finite order element $A$ and that if we have distinct $S^A,S^B\in\CS$ then $S^A\nsubseteq S^B$ and $S^B\nsubseteq S^A$.

\begin{lem}\label{intersection-strata-sl}
Suppose that $n\ge 4$. If $S^A,S^B$ are distinct elements in $\mathcal{S}$, then $$\dim S^{\langle A,B\rangle}=\dim(S^A\cap S^B)\le\frac{n(n-1)}2-2$$ 
where $\langle A,B\rangle$ is the subgroup of $\SO_n$ generated by $A$ and $B$.
\end{lem}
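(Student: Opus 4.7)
The key tool is Proposition \ref{cartan}, which lets me express $\dim S^{\langle A,B\rangle}=\dim C_{\SL(n,\BR)}(\langle A,B\rangle)-\dim C_{\SO_n}(\langle A,B\rangle)$, so the plan is to compute these two centralizers via the joint isotypic decomposition of $\mathbb{R}^n$ as an $\langle A,B\rangle$-module. The starting point is the bound derived just above in \eqref{bad}, namely $\dim S^A,\dim S^B\le\tfrac{n(n-1)}{2}$, with equality attained only when the element is conjugate to $Q_{n-1,1}$ (which in turn forces $n$ to be odd, since $Q_{n-1,1}\in\SO_n$ only then). The same kind of extremal analysis applied to the next-largest configurations of $(d_t,d_s,\{d_i\})$ in \eqref{ivegotahangover} will show that every non-central finite-order $A\in\SO_n$ not conjugate to $Q_{n-1,1}$ satisfies $\dim S^A\le\tfrac{n^2-3n+6}{2}$, with equality for instance when $A$ is an involution having $\pm 1$-eigenspaces of dimensions $n-2$ and $2$.

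Since $\tfrac{n^2-3n+6}{2}\le\tfrac{n(n-1)}{2}-2$ for every $n\ge 5$, the trivial estimate $\dim(S^A\cap S^B)\le\min(\dim S^A,\dim S^B)$ settles every instance with $n\ge 5$ except the one in which both $A$ and $B$ are conjugate to $Q_{n-1,1}$ ($n$ odd). In that remaining case I write $V^A_+,V^B_+\subset\mathbb{R}^n$ for the one-dimensional $+1$-eigenspaces of $A$ and $B$; the hypothesis $S^A\ne S^B$ forces $V^A_+\ne V^B_+$. Using the $\langle A,B\rangle$-invariant orthogonal splitting $\mathbb{R}^n=W\oplus W^{\perp}$ with $W=V^A_++V^B_+$, both $A$ and $B$ act by $-\Id$ on $W^\perp\subseteq V^A_-\cap V^B_-$, while on the $2$-plane $W$ they generate a dihedral group (degenerating to $\BZ/2\times\BZ/2$ exactly when $V^A_+\perp V^B_+$). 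Explicit computation of the two centralizers on this decomposition gives $\dim S^{\langle A,B\rangle}\le \tfrac{(n-1)(n-2)+2}{2}$, and this last quantity is $\le \tfrac{n(n-1)}{2}-2$ for all $n\ge 4$.

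It remains to treat $n=4$, where the maximum of $\dim S^A$ over non-central $A\in\SO_4$ equals $5$, attained only by involutions of type $(d_t,d_s)=(2,2)$; all other non-central types give $\dim S^A\le 3$, so the sole problematic case is when both $A$ and $B$ are such involutions with two-dimensional $\pm 1$-eigenspaces. Parametrizing the relative position of $V^A_+$ and $V^B_+$ in $\mathbb{R}^4$ by the principal angles $(\theta_1,\theta_2)\in[0,\pi/2]^2$, the hypothesis $S^A\ne S^B$ excludes $(0,0)$ and $(\pi/2,\pi/2)$; in each remaining configuration the joint isotypic decomposition of $\mathbb{R}^4$ under $\langle A,B\rangle$ has at least three summands, and an explicit computation of $C_{\SL(4,\BR)}(\langle A,B\rangle)$ and $C_{\SO_4}(\langle A,B\rangle)$ yields $\dim S^{\langle A,B\rangle}\le 3$ in every case.

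The main obstacle I anticipate is precisely this $n=4$ case: because the maximum of $\dim S^A$ alone can equal $5>4$, the trivial bound fails and one is forced into a delicate geometric case analysis of how two pairs of orthogonal complementary $2$-planes can lie in $\mathbb{R}^4$; the remaining steps are, by contrast, a routine extremization of the formula \eqref{ivegotahangover} plus a direct centralizer calculation on the two-summand decomposition $\mathbb{R}^n=W\oplus W^\perp$.
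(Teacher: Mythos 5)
Your proposal ultimately reaches the correct bound, and the computation you do for the case where both $A$ and $B$ are conjugate to $Q_{n-1,1}$ (via the splitting $\BR^n = W \oplus W^{\perp}$) is essentially the same as the paper's (which uses $\BR^n=(V\cap V')\oplus L\oplus L'$), producing the same value $\frac{n^2-3n+4}{2} = \frac{(n-1)(n-2)}{2}+1$. However, you take a noticeably longer and more fragile route for the remaining cases, and this is the place worth flagging. The paper handles the case where (say) $A$ is \emph{not} conjugate to $Q_{n-1,1}$ by a single soft observation: since $S^A, S^B \in \CS$ are distinct maximal fixed-point sets, $S^{\langle A,B\rangle} = S^A\cap S^B$ is a \emph{proper} subset of $S^A$, and since it is also a closed totally geodesic submanifold of the connected manifold $S^A$, the inclusion is strict in dimension, i.e.\ $\dim S^{\langle A,B\rangle} < \dim S^A \le \frac{n(n-1)}{2}-1$. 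This gives the bound $\le \frac{n(n-1)}{2}-2$ at once, uniformly for all $n\ge 4$. You instead use only the non-strict estimate $\dim(S^A\cap S^B)\le \min(\dim S^A,\dim S^B)$, which forces you to compute the \emph{second}-largest value of $\dim S^A$ from \eqref{ivegotahangover} (namely $\frac{n^2-3n+6}{2}$) and to then notice that this bound is one off for $n=4$, spawning the whole principal-angle analysis of pairs of $(2,2)$-involutions in $\SO_4$. That analysis is plausible (and indeed the configurations do all give $\dim S^{\langle A,B\rangle}\le 3$), but as you acknowledge it is not actually carried out, and in any case it is unnecessary: the strictness coming from proper containment of a closed submanifold of a connected manifold is exactly the one extra unit of dimension you were missing. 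I'd also note a small wrinkle: your formula "$d_t = n-2$, $d_s = 2$" achieving $\frac{n^2-3n+6}{2}$ is the $\SO_n$-admissible configuration only when $n$ is even (so that $d_t$ is even); for $n$ odd one must take $(d_t,d_s)=(2,n-2)$ instead. Your bound is still correct, but this kind of parity bookkeeping — together with the $n=4$ exceptional analysis — is exactly what the paper's strict-inequality argument lets you avoid.
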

\begin{proof} Assume first that both $A$ and $B$ are conjugate to $Q_{n-1,1}$. Consider then the direct sum decompositions $\BR^n=V\oplus L$ and $\BR^n=V'\oplus L'$ whose factors are the eigenspaces of $A$ and $B$ respectively, so that $\dim L=\dim L'=1$. Note that, as long as $S^A\cap S^B\neq\emptyset$, we have $L\neq L'$ and $V\neq V'$ because $A\neq B$. Any matrix which commutes with $A$ and with $B$ has to preserve the direct sum decomposition
$$\BR^n=(V\cap V')\oplus L\oplus L'$$
which means that the common centralizer $C_{\SL(n,\BR)}(\langle A,B\rangle)$ of $A$ and $B$ is conjugate to $S(\GL(n-2,\BR)\times\BR^*\times\BR^*)$. This implies that 
$$\dim S^{\langle A,B\rangle}=\frac{n(n-1)}{2}+2-n\leq \frac{n(n-1)}2-2$$
as we claimed.

Assume now that $A$ is not conjugate to $Q_{n-1,1}$. Then we get from \eqref{ivegotahangover} that
$$\dim S^A\leq \frac{n(n-1)}2-1.$$
As $S^A\nsubseteq S^B$, $S^{\langle A,B\rangle}=S^A\cap S^B\subsetneq S^A$. Then the fact that 
$S^{\langle A,B\rangle}$ is a closed submanifold of  the connected manifold $S^A$ 
implies that 
$$\dim S^{\langle A,B\rangle}<\dim S^A\leq \frac{n(n-1)}2-1$$
so we are done.
\end{proof}
We remark that the lemma above also holds for matrices in $\mathrm{O}_n$. \\

Equation \eqref{bad} and Lemma \ref{intersection-strata-sl} imply that the first two conditions in Corollary \ref{cor: cohom flat} are satisfied as long as $n\ge 4$. To be able to check the third condition we need to construct appropriate flats in the symmetric space $S=\SL(n,\BR)/\SO_n$. Identify $S$ with the space of unimodular positive definite quadratic forms on $\BR^n$, suppose that $A\in\SL(n,\BR)$ is conjugate to $Q_{n-1,1}$, and let 
$$\BR^n=L\oplus V,\ \ \dim L=1\ \hbox{and}\ \dim V=n-1$$
be the decomposition of $\BR^n$ as direct sum of the eigenspaces of $A$. The fixed point set $S^A$ of $A$ consists of those unimodular scalar products on $\BR^n$ with respect to which the line $L$ is orthogonal to the subspace $V$. To describe a maximal flat start with a direct sum decomposition
$$\BR^n=P_1\oplus\dots\oplus P_n$$
where each $P_i$ has dimension $1$. The flat $F$ is the set of unimodular forms with respect to which the lines $P_1,\dots,P_n$ are mutually orthogonal. Note that general position of the direct sum decompositions $L\oplus V$ and $P_1\oplus\dots\oplus P_n$ implies that the corresponding fixed point set $S^A$ and flat $F$ are also in general position with respect to each other. 

\begin{prop}\label{prop:keysl}
Suppose that $A_1,\dots,A_r\in\SL(n,\BR)$ are such that $A_i$ is conjugate to $Q_{n-1,1}$ for all $i$ and such that $S^{A_i}\neq S^{A_1}$ for all $i\neq 1$. Then there is a maximal flat $F\subset S$ and an open neighborhood $U$ of $\Id\in\SL(n,\BR)$ such that for all $g\in U$ we have 
\begin{enumerate}
\item $gF$ intersects $S^{A_1}$ transversely in a point, and
\item $gF$ is disjoint from $S^{A_i}$ for $i\in\{2,\dots,r\}$.
\end{enumerate}
\end{prop}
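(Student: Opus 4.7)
The plan is to parameterize maximal flats via decompositions of $\BR^n$ into lines, translate conditions (1) and (2) into sign conditions on basis vectors, and construct a good decomposition explicitly; then obtain $U$ from openness of these conditions.

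After conjugating I assume $A_1=Q_{n-1,1}$, so $L_1=\BR e_n$ and $V_1=\Span(e_1,\ldots,e_{n-1})$, and I identify $S$ with unimodular positive-definite quadratic forms on $\BR^n$. A maximal flat corresponds to a decomposition $\BR^n=P_1\oplus\cdots\oplus P_n$ with basis vectors $v_j\in P_j$, and consists of forms $q$ making the $P_j$ pairwise $q$-orthogonal. Each $S^{A_i}$ is $\{q:L_i\perp_q V_i\}$ with $L_i=\BR\ell_i$ and $V_i=\ker\phi_i$.

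For $q\in F$ parameterized by $a_j^2:=q(v_j,v_j)$, writing $\ell_i=\sum_j\lambda_j^{(i)}v_j$, the $q$-orthogonality of the $v_j$'s gives $q(\ell_i,v_j)=\lambda_j^{(i)}a_j^2$; on the other hand the condition $L_i\perp_q V_i$ is equivalent to $q(\ell_i,\cdot)=c\,\phi_i(\cdot)$ for some $c\in\BR$, so evaluating at $v_j$ gives $\lambda_j^{(i)}a_j^2=c\,\phi_i(v_j)$ for all $j$. Together with $\prod_j a_j^2=(\det V)^2$, this linear system has a positive solution $(a_j^2)$ precisely when the ratios $\phi_i(v_j)/\lambda_j^{(i)}$ share a common sign, in which case the solution is unique. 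Hence $F\cap S^{A_i}$ is either empty or a single point. Condition (1) therefore amounts to an \emph{aligned} sign pattern for $A_1$ plus non-degeneracy; condition (2) amounts to a \emph{mixed} sign pattern for each $A_i$ with $i\geq 2$. Both are open conditions on $\{v_j\}$.

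To find $\{P_j\}$ simultaneously satisfying all these open conditions I take $v_j=e_j+\epsilon\xi_j$ with small $\epsilon>0$ and well-chosen $\xi_j\in\BR^n$. A direct expansion yields $\lambda_j^{(1)}=\delta_{jn}-\epsilon(\xi_n)_j+O(\epsilon^2)$ and $\phi_1(v_j)=\delta_{jn}+\epsilon(\xi_j)_n$, so for $j\neq n$ the relevant ratio reduces to $-(\xi_j)_n/(\xi_n)_j$ at leading order; the sign can thus be prescribed by a suitable choice of $\xi_j$'s (for example, $\xi_j=e_n$ for $j<n$ and $\xi_n=-(e_1+\cdots+e_{n-1})$ makes all ratios positive). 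For each $A_i$ with $i\geq 2$, the analogous ratios depend on the pair $(\ell_i,\phi_i)\neq(\ell_1,\phi_1)$, so for generic $\xi_j$'s within the open region where $A_1$ aligns, the sign patterns for $A_i$ are mixed. This exhibits a non-empty open set of good decompositions, producing $F$; openness of the conditions in $g$, via the continuous map $g\mapsto g\cdot F$, then supplies the neighborhood $U$.

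The main obstacle is the construction in the last step: arranging an aligned sign pattern for $A_1$ while simultaneously ensuring mixed patterns for each $A_i$, $i\geq 2$. Simultaneous satisfiability hinges on the assumption $(L_i,V_i)\neq(L_1,V_1)$, which separates the sign-change loci for different $A_i$'s, but this must be verified carefully when several $A_i$'s share a common line or hyperplane so that the various sign conditions remain genuinely independent.
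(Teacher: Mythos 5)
Your reduction of the problem — identifying $S$ with unimodular positive-definite forms, writing $\ell_i=\sum_j\lambda_j^{(i)}v_j$, and observing that $F\cap S^{A_i}\neq\emptyset$ iff all ratios $\phi_i(v_j)/\lambda_j^{(i)}$ share a sign (with the intersection then a single point determined by normalization) — is correct and is essentially equivalent to the paper's Lemma~\ref{lem-la}: your sign condition is exactly the cone condition phrased through coordinates. The computation that $F\cap S^{A_1}$ is either empty or a single point via a nondegenerate linear system is a clean way to package the transversality discussion. Up to this point the two proofs are the same argument in different clothing.

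The gap, which you yourself flag in your final paragraph, is real and is precisely where the paper does its work. After exhibiting one basis $\{v_j\}$ for which the $A_1$-ratios align, you invoke ``generic $\xi_j$'s within the open region where $A_1$ aligns'' to claim the $A_i$-ratios ($i\geq 2$) are mixed. But the set $U_1$ of bases whose flat meets $S^{A_1}$ is open, and each set $C_i$ of bases whose flat meets $S^{A_i}$ is closed with nonempty interior; genericity (Baire-type) arguments do not rule out $U_1\subseteq\bigcup_{i\geq 2}C_i$. Indeed your own sample choice $\xi_j=e_n$ ($j<n$), $\xi_n=-(e_1+\cdots+e_{n-1})$ fails already for $A_2$ with $L_2$ close to $L_1$ and $V_2=V_1$: the $A_2$-ratios come out all positive as well, so $F$ meets $S^{A_2}$. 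One therefore needs to actively \emph{produce} a basis in $U_1$ outside every $C_i$, not merely assert that one exists generically. The paper does this constructively: it chooses a line $\ell\subset V_1$ with $\ell\neq\pi(L_i)$ for $i\geq 2$ and $\ell\not\subset V_i$ whenever $V_i\neq V_1$ (this is exactly where the hypothesis $(L_i,V_i)\neq(L_1,V_1)$ is used), then puts $\ell=\BR e_1$, $L_1=\BR e_2$, and stretches an orthonormal basis $v_j^1$ (with $\langle v_j^1,e_1\rangle>0$) by $\Phi_\lambda=\mathrm{diag}(\lambda^2,\lambda,1,\dots,1)$. As $\lambda\to\infty$ the cone $\sum\BR_+v_j^\lambda$ collapses toward $\BR_+e_1$, so for large $\lambda$ each $\ell_i$ ($i\geq 2$) drops out of the cone while the $\langle\cdot,\cdot\rangle_\lambda$-orthogonality of $L_1\oplus V_1$ is maintained by construction. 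To make your proof complete you would need to replace the genericity step by such an explicit escape, and in particular you would need to exploit the separation conditions on $(L_i,V_i)$ that you currently only mention.
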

\begin{proof}
To begin with note that $F$ and $S^{A_1}$ have complementary dimensions. In particular, to have a single transversal intersection is stable under small perturbations. In other words, (1) is automatically satisfied by every $gF$ with $g$ in some open neighborhood of $\Id$ if it is satisfied by $F$ itself. 

To find a maximal flat $F$ which, even after small perturbations, is disjoint from $S^{A_i}$ for $i=2,\dots,r$ we will use the following observation.

\begin{lem}\label{lem-la}
Suppose $V$ is a hyperplane in $\BR^n$ and let $u,v_1,\dots,v_n$ be vectors in the same component of $\BR^n\setminus V$. Suppose also that $v_1,\dots,v_n$ are linearly independent and that $u$ does not belong to the convex cone $\BR_+v_1+\dots +\BR_+v_n$. Then there is no scalar product with respect to which $u$ is orthogonal to $V$ and $v_1,\dots,v_n$ are orthogonal to each other.
\end{lem}
\begin{proof}
Suppose that there is such a scalar product and note that up replacing the vectors by positive multiples we may assume without loss of generality that all the vectors $u,v_1,\dots,v_n$ have unit length. Let $E$ be the hemisphere of $\BS^n\setminus V$ centred at $u$. Now, each $v_i\in E$ and this means that $u$ belongs to the ball in $\BS^n$ of radius $\frac\pi 2$ centred at $v_i$. In other words, $u$ belongs to the spherical triangle with vertices $v_i$ which means exactly that $u$ belongs to the cone $\BR_+v_1+\dots +\BR_+v_n$.
\end{proof}

Continuing with the proof of Proposition \ref{prop:keysl} and with the notation of Lemma \ref{lem-la}, suppose that $A\in\SL(n,\BR)$ is conjugate to $Q_{n-1,1}$ with associated direct sum decomposition $\BR^n=V\oplus\BR u$, and let $F$ be the flat in $S$ corresponding to the direct sum decomposition $\BR v_1\oplus\dots\oplus\BR v_n$. The non-existence of a scalar product with respect to which these two direct sum decompositions are orthogonal implies that $S^A\cap F=\emptyset$. In fact, noting that once $V$ and $u$ are fixed, the condition in Lemma \ref{lem-la} on the vectors $v_1,\dots,v_n$ is open, we also get that $S^A\cap gF=\emptyset$ for every $g\in\SL(n,\BR)$ near $\Id$.

Suppose now that $A_1,\dots,A_r$ are as in the statement of the proposition, let $\BR^n=V_i\oplus L_i$ be the associated direct sum decompositions with $\dim L_i=1$ and let $u_i\in L_i$ be a non-trivial vector. The upshot of the preceding discussion is that Proposition \ref{prop:keysl} follows once we can find a basis $v_1,\dots,v_n$ of $\BR^n$ in general position with respect to $V_i\oplus L_i$ for all $i=1,\dots,r$ such that
\begin{itemize}
\item[(i)] there is a scalar product with respect to which both direct sum decompositions $\BR v_1\oplus\dots\oplus\BR v_n$ and $V_1\oplus L_1$ are orthogonal, but that
\item[(ii)] for each $i=2,\dots,r$ we have that $u_i\notin\epsilon_1^i\BR_+v_1+\dots+\epsilon_n^i\BR_+v_n$ where $\epsilon_j^i=\pm 1$ is chosen so that $u_i$ and $\epsilon_j^iv_j$ are on the same side of $\BR^n\setminus V_i$.
\end{itemize}
To find the desired basis consider the projection $\pi:\BR^n\to V_1$ with kernel $L_1$. We choose a line $\ell\subset V_1$ with $\ell\neq\pi(L_i)$ for $i=2,\dots,r$. Moreover, if $V_i\neq V_1$ we assume that $\ell\not\subset V_i$.

 Up to a change of coordinates we can assume that $L_1$ and $V_1$ are orthogonal with respect to the standard scalar product $\langle\cdot,\cdot\rangle$, that $\ell$ is spanned by the first vector $e_1$ of the standard basis, and that $L_1$ is spanned by $e_2$. For $\lambda>0$ consider the linear map $\Phi_\lambda:\BR^n\to\BR^n$ whose matrix with respect to the standard basis $e_1,\dots,e_n$ is given by the diagonal matrix
 $$\Phi_\lambda=
\left(
\begin{array}{ccccc}
\lambda^2  &   & & &  \\
  & \lambda & &  &   \\
  &   &  1 & & \\
  & & & \ddots & \\
  & & & & 1 
\end{array}
\right)$$
and let $\langle\cdot,\cdot\rangle_\lambda$ be the unique scalar product with $\Phi_\lambda^*\langle\cdot,\cdot\rangle_\lambda=\langle\cdot,\cdot\rangle$. Choose now an $\langle\cdot,\cdot\rangle$-orthonormal basis $v_1^1,\dots,v_n^1$ in general position with respect to the direct sum decompositions $\BR^n=V_i\oplus L_i$ for $i=1,\dots,r$ and with 
$$\langle v_j^1,e_1\rangle>0\ \ \hbox{for all}\ j=1,\dots,n.$$
For $\lambda>1$ sufficiently large the $\langle\cdot,\cdot\rangle_\lambda$-orthonormal basis $v_1^\lambda,\dots,v_n^\lambda$ given by
$$v_j^\lambda=\Phi_\lambda(v_j^1)$$
has the desired properties. We leave the details to the reader.
\end{proof}

We are now ready to conclude the proof that Bredon cohomological dimension equals virtual cohomological dimension for lattices in $\SL(n,\BR)$ as long as $n\ge 4$.

\begin{lem}\label{lem:doneslnr}
If $n\ge 4$, then $\cdm(\Gamma)=\vcd(\Gamma)$ for every lattice $\Gamma\subset\SL(n,\BR)$.
\end{lem}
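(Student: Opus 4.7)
The plan is to apply Corollary \ref{cor: cohom flat}, after first disposing of the easy case by means of Lemma \ref{slkor1}. By that lemma, we may assume from the start that $\Gamma$ contains an element conjugate to $Q_{n-1,1}$ and that $\vcd(\Gamma)=\frac{n(n-1)}{2}$. Combined with Proposition \ref{vcd lower bound} the latter forces $\rank_\BQ(\BG)=\rank_\BR(\BG)=n-1$, so Lemma \ref{density flats} becomes available: the rational maximal flats are dense among all maximal flats of $S$.

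Next I would check the three hypotheses of Corollary \ref{cor: cohom flat} in turn. Condition (1) is the content of equation \eqref{bad}, and condition (2) is exactly Lemma \ref{intersection-strata-sl} (valid for $n\ge 4$). For condition (3), let $A_1,\dots,A_r\in\Gamma$ be non-central finite-order elements with pairwise distinct fixed-point sets of dimension $\vcd(\Gamma)=\frac{n(n-1)}{2}$. From the analysis of \eqref{ivegotahangover} and the characterization of equality in \eqref{bad}, every such $A_i$ must be conjugate within $\SL(n,\BR)$ to $Q_{n-1,1}$. I may therefore invoke Proposition \ref{prop:keysl} to obtain a maximal flat $F\subset S$ and an open neighborhood $U$ of $\Id\in\SL(n,\BR)$ such that, for every $g\in U$, the flat $gF$ meets $S^{A_1}$ transversely in exactly one point and is disjoint from $S^{A_i}$ for $i\ge 2$. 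By Lemma \ref{density flats}, some such $g\in U$ can be chosen so that $gF$ is in addition a maximal rational flat.

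This produces the rational flat required by condition (3) of Corollary \ref{cor: cohom flat}, and the equality $\vcd(\Gamma)=\cdm(\Gamma)$ follows at once. The only real subtlety, and what I would regard as the main point of the argument, is the interplay between the openness in $g$ of the geometric conditions supplied by Proposition \ref{prop:keysl} and the density of rational flats furnished by Lemma \ref{density flats}; with the preparatory work already in place this interplay is essentially formal.
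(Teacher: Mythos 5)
Your proof is correct and follows essentially the same route as the paper: reduce to the case $\vcd(\Gamma)=\tfrac{n(n-1)}{2}$ via Lemma \ref{slkor1}, verify conditions (1) and (2) of Corollary \ref{cor: cohom flat} from \eqref{bad} and Lemma \ref{intersection-strata-sl}, and for condition (3) combine Proposition \ref{prop:keysl} with Lemma \ref{density flats} to perturb the flat within the open set $U$ into a rational one. Nothing to add.
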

\begin{proof} Assume $n\geq 4$. From Lemma \ref{slkor1} we already know that the claim holds for all lattices $\Gamma\subset\SL(n,\BR)$ unless possibly if 
$$d=\vcd(\Gamma)=\frac{n(n-1)}2.$$
Suppose from now on that this is the case, and note that Proposition \ref{vcd lower bound} implies that $\rank_\BQ(\Gamma)=\rank_\BR(\SL(n,\BR))$, meaning that maximal rational flats are actually maximal flats.

From \eqref{bad} and from Lemma \ref{intersection-strata-sl} we get that the first two conditions in Corollary \ref{cor: cohom flat} are satisfied. We claim that also the third condition holds as well. To check that this is the case, suppose that we have $A_1,\dots,A_r\in\Gamma$ of finite order, non-central and with $\dim(S^{A_i})=\vcd(\Gamma)$ for all $i$ and with $S^{A_i}\neq S^{A_j}$ for all $i\neq j$. The discussion preceding Lemma \ref{slkor1} implies that each $A_i$ is conjugate in $\SL(n,\BR)$ to $Q_{n-1,1}$. Proposition \ref{prop:keysl} implies that there is a maximal flat $F$ and an open neighborhood $U$ of $\Id\in\SL_n\BR$ such that for all $g\in U$ we have 
\begin{enumerate}
\item $gF$ intersects $S^{A_1}$ transversely in a point, and
\item $gF$ is disjoint of $S^{A_i}$ for $i\in\{2,\dots,r\}$.
\end{enumerate}
Now, Lemma \ref{density flats} implies that we can find $g\in U$ such that $gF$ is rational. This proves that the third condition in Corollary \ref{cor: cohom flat} is also satisfied, and thus that $\vcd(\Gamma)=\cdm(\Gamma)$.
\end{proof}

The argument we just used does not apply to the case of lattices in $\SL(3,\BR)$ because there are lattices $\Gamma\subset \mathrm{SL}(3,\mathbb{R})$ which contain $A,B\in\SO_3\cap\Gamma$ with $S^A\neq S^B$ and $\dim(S^A\cap S^B)=2$ while $\vcd(\Gamma)=3$. In other words, the second condition in Corollary \ref{cor: cohom flat}  is not satisfied. We deal with the case of lattices in $\SL(3,\BR)$ now.

\begin{lem}\label{donesl3r}
If $\Gamma$ is a lattice in $\SL(3,\BR)$, then we have $\cdm(\Gamma)=\vcd(\Gamma)$.
\end{lem}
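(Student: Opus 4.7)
The plan is to reduce to the case $\vcd(\Gamma)=3$ via Lemma~\ref{slkor1}, and then invoke Proposition~\ref{gen}. Once we are in this situation, Proposition~\ref{vcd lower bound} forces $\rank_\BQ(\Gamma)=2=\rank_\BR(\SL(3,\BR))$; by Margulis's arithmeticity theorem together with the fact that $\SL_3$ admits a unique $\BQ$-form of full $\BQ$-rank, the lattice $\Gamma$ must be commensurable with $\SL(3,\BZ)$. Since $Z(\Gamma)=\{e\}$, the hypothesis of Proposition~\ref{gen} reduces to showing that
\[\mathrm{H}_c^n(X^H,X^H_{\mathrm{sing}})=0\]
for every nontrivial finite subgroup $H\subset\Gamma$ and every $n\ge 3$.

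When $\dim S^H\le 2$ the vanishing is immediate from dimensional reasons. In the remaining case $\dim S^H=3$, inequality~\eqref{bad} forces $H=\langle A\rangle$ with $A$ conjugate to $Q_{2,1}$; the associated eigenspace decomposition $\BR^3=V\oplus L$ is $\BQ$-rational, so $C_\Gamma(H)$ is commensurable with the block-diagonal copy of $\GL_2(\BZ)$ inside $\SL(3,\BZ)$. As $\GL_2(\BZ)$ is virtually free, $\vcd(C_\Gamma(H))=1$; since Theorem~\ref{borel-serre-thm} presents $X^H$ as a cocompact model for $\underline{E}(C_\Gamma(H))$, this gives $\mathrm{H}_c^n(X^H)=0$ for all $n\ge 2$. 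Combining this with the observation $\dim X^H_{\mathrm{sing}}\le 2$ (the maximum value $2$ being attained only when $H'\supsetneq H$ is a Klein-four group, as our centralizer analysis shows that no other finite supergroup can achieve $\dim S^{H'}=2$), the long exact sequence of the pair $(X^H,X^H_{\mathrm{sing}})$ yields the claim trivially for $n\ge 4$ and, for $n=3$, reduces the problem to proving
\[\mathrm{H}_c^2(X^H_{\mathrm{sing}})=0.\]

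The main obstacle is this last vanishing. I would decompose $X^H_{\mathrm{sing}}$ according to its top-dimensional strata, namely the fixed sets $X^{H'}$ as $H'\supsetneq H$ ranges over Klein-four subgroups; each such $X^{H'}$ is the Borel-Serre bordification of a maximal rational flat passing through $S^H$, hence a compact contractible $2$-disk (a Weyl-chamber polytope). Distinct strata intersect in subcomplexes of dimension at most $1$, so if we set $Z$ equal to the union of all such pairwise intersections together with the lower-dimensional fixed sets, Lemma~\ref{covering} gives
\[\mathrm{H}_c^*(X^H_{\mathrm{sing}},Z)\cong\bigoplus_{H'}\mathrm{H}_c^*(X^{H'},Z\cap X^{H'}).\]
Since each $X^{H'}$ is a compact contractible disk, $\mathrm{H}_c^2(X^{H'},Z\cap X^{H'})\cong\mathrm{H}_c^1(Z\cap X^{H'})$, and the long exact sequence for $(X^H_{\mathrm{sing}},Z)$ together with $\dim Z\le 1$ identifies $\mathrm{H}_c^2(X^H_{\mathrm{sing}})$ with the cokernel of a connecting map coming from $\mathrm{H}_c^1(Z)$. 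The remaining step is to verify, using the combinatorics of the rational Tits building of $\SL_3(\BQ)$ restricted to rational flats through the line $L\subset\BR^3$, that this cokernel vanishes; equivalently, that the nerve of the Klein-four covering of $X^H_{\mathrm{sing}}$ is acyclic in degree $2$. Once this has been checked, Proposition~\ref{gen} and the a priori inequality $\vcd(\Gamma)\le\cdm(\Gamma)$ conclude the proof.
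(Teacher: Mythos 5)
Your overall architecture is the same as the paper's: reduce via Lemma~\ref{slkor1} to the case $\vcd(\Gamma)=3$, identify $\Gamma$ as commensurable with $\SL(3,\BZ)$, and then invoke Proposition~\ref{gen} by establishing $\mathrm{H}_c^n(X^H,X^H_{\mathrm{sing}})=0$ for $n\ge 3$ and all nontrivial finite $H$. Your handling of the centralizer is fine and even slightly sharper than the paper's (you get $\vcd(C_\Gamma(H))=1$ from the identification with $\GL_2(\BZ)$, whereas the paper only needs $\cdm(C_\Gamma(H))\le 2$, obtained from the noncompactness remark after Proposition~\ref{thin-thick}). The reduction to showing $\mathrm{H}_c^2(X^H_{\mathrm{sing}})=0$ is also correct.

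However, that last vanishing is precisely where the proof stops being a proof. You set up the Klein-four covering, apply Lemma~\ref{covering}, and reduce to showing that a cokernel of a connecting map (equivalently, a Mayer--Vietoris-type obstruction coming from the one-dimensional intersection complex $Z$) vanishes, and then write ``the remaining step is to verify... that this cokernel vanishes'' and ``once this has been checked.'' Nothing is checked; the hard computation is entirely deferred. This is not a minor polish issue: the cokernel of $\mathrm{H}_c^1(Z)\to\bigoplus_{H'}\mathrm{H}_c^1(Z\cap X^{H'})$ is exactly the kind of quantity that could be nonzero for a union of polytopes glued along edges, and one would genuinely need to understand the combinatorics of the rational Tits building near the rational line $L$ to control it. You also assert without proof that distinct Klein-four strata intersect in dimension $\le 1$ and that $Z$ is one-dimensional; these need arguments too.

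The paper avoids this obstacle entirely by a different decomposition of the singular set. Rather than covering $X^H_{\mathrm{sing}}$ by the top strata, it splits it as the $C_\Gamma(H)$-singular set $Y$ plus the extra fixed sets $X^K$ for finite $K\supsetneq H$ with $K\not\subseteq C_\Gamma(H)$. For the first piece, the bound $\cdm(C_\Gamma(H))\le 2$ combined with Theorem~\ref{th: bredon compact support} applied to the group $C_\Gamma(H)$ acting on its cocompact model $X^H$ gives $\mathrm{H}_c^n(X^H,Y)=0$ for $n\ge 3$ directly, with no building combinatorics. For the second piece, one observes that any such $K$ contains two noncommuting involutions $A,B$, and since $AB$ is then a nontrivial finite-order element of order $>2$ it is not conjugate to $Q_{2,1}$, so $\dim S^{AB}\le 1$; therefore $X^K\subseteq X^{\langle A,B\rangle}$ has dimension at most $1$, and the passage from $(X^H,Y)$ to $(X^H,X^H_{\mathrm{sing}})$ only changes compactly supported cohomology in degrees $\le 2$. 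That argument closes the gap that your proposal leaves open, and I would encourage you to adopt it rather than trying to push through the nerve computation.
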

\begin{proof}
From Lemma \ref{slkor1} we get that the claim holds true except possibly if $\vcd(\Gamma)=3$, which means that the $\mathbb{Q}$-rank of $\Gamma$ is $2$. From Proposition \ref{prop-rank2} we get that $\Gamma$ contains a subgroup commensurable to $\SL(3,\BZ)$ or to $\SO(2,3)_\BZ$. In fact, we deduce from \cite[Prop. 6.44 and Prop. 6.48]{Witte} that $\Gamma\subset\SL(3,\BR)$ can be conjugated to a lattice commensurable to $\SL(3,\BZ)$. We may thus suppose that $\Gamma$ and $\SL(3,\BZ)$ were commensurable to begin with. This implies that the image of $\Gamma$ in the center free group $\PSL(3,\BC)$ is contained in $\PSL(3,\BQ)$, which amounts to saying that every element of $\Gamma$ is a real multiple of a matrix in $\GL(3,\BC)$ with rational entries. In particular, the direct sum decomposition $\BR^3=V\oplus L$ corresponding to any element $A\in\Gamma$ conjugate to $Q_{2,1}$ is rational. This implies that the 1-parameter group $\BT$ of $\SL(3,\BC)$ consisting of linear transformations which restrict to homotheties of $L\otimes_\BR\BC$ and $V\otimes_\BR\BC$ is a $\BQ$-split torus. Since on the other hand $\BT$ centralizes $A$, it follows that the (reductive) algebraic group $C_{\SL(3,\BC)}(A)$ has $\BQ$-rank at least 1, which yields that the set of integral points $C_{\SL(3,\BZ)}(A)$ does not act cocompactly on $S^A$, where $S$ is the symmetric space of $\SL(3,\BR)$. Since $\Gamma$ and $\SL(3,\BZ)$ are commensurable we get that  $C_{\Gamma}(A)$ does not act cocompactly on $S^A$ either. From the remark following Proposition \ref{thin-thick} we deduce that $\gdim C_\Gamma(A)<\dim S^A$ and hence $\cdm C_\Gamma(A)\le 2$ for every element $A\in\Gamma$ which is conjugate in $\SL(3,\BR)$ to $Q_{2,1}$. Since $\dim S^A\le 2$ for every other finite order element $1\neq A\in\Gamma$ we get that $\cdm C_\Gamma(A)\le 2<\vcd(\Gamma)$ for every non-trivial finite order element $A$. (Note that the center of $\Gamma$ is trivial.)

Let $\mathcal{F}$ be the collection of the finite subgroups of $\Gamma$.  It follows from the above that $\cdm C_\Gamma(H)\le 2<\vcd(\Gamma)$ for every non-trivial $H \in \mathcal{F}$. For $H \in \mathcal{F}$ non-trivial, consider $\underline{E}C_{\Gamma}(H)=X^H$ and the singular set for the $C_{\Gamma}(H)$-action on $X^H$
\[     Y=\bigcup_{\substack{K \in \mathcal{F} \\ H \subsetneq K \subseteq C_{\Gamma}(H)}} X^K.  \]
Since $\underline{\mathrm{cd}}(C_{\Gamma}(H))\leq 2$, it follows from \cite[Th. 1.1]{DMP} that $\mathrm{H}^n_c(X^H,Y)=0$ for all $n\geq 3$. Now consider the singular set for the $\Gamma$-action on $X$
\[   X^H_{\mathrm{sing}}= \bigcup_{\substack{K \in \CF \\ H \subsetneq K \subseteq  \Gamma}} X^K.    \]
Let $K$ be a finite subgroup of $\Gamma$ that strictly contains $H$ but not contained in $C_{\Gamma}(H)$. We claim that $\dim X^K \leq 1$. To prove this, first note that there exist two non-commuting finite order elements of $A$ and $B$ of $\Gamma$ such that $X^K \subseteq X^{\langle A, B \rangle}$. So it suffices to show that $\dim X^{\langle A, B \rangle}\leq 1$. The group $\langle A, B \rangle$ must contain a non-central finite order element that is not conjugate to $Q_{2,1}$. Indeed, if $A$ and $B$ are both conjugate to $Q_{2,1}$ then they both have order $2$. Since $A$ and $B$ do not commute, the product $AB$ is not central and cannot have order two. Hence $AB$ cannot be conjugate to $Q_{2,1}$. One checks using (\ref{ivegotahangover}) that $\dim S^A\leq 1$ for any finite order non-central element $A$ that is not conjugate to $Q_{2,1}$. Hence, we have $\dim X^{\langle A,B \rangle}\leq \dim X^{AB}\leq 1$ as desired.  We conclude that $X^H_{\mathrm{sing}}$ can be obtained from $Y$ be adding cells of dimension at most $1$. This implies that 

\[    \mathrm{H}^2_c(X^{H}_{\mathrm{sing}}) \rightarrow \mathrm{H}^2_c(Y)   \]
is injective and hence that $\mathrm{H}^n_c(X^H,X^H_{\mathrm{sing}})=0$ for all $n\geq 3$. Proposition \ref{gen} now implies that $\cdm(\Gamma)=\vcd(\Gamma)$, as we needed to show.
\end{proof}

%
%
%
%

\section{Indefinite orthogonal group $\SO(p,q)$}\label{sec:sopq}

Using a similar strategy as in the case of lattices in $\SL(n,\BR)$, we prove now that lattices $\Gamma$ in $\SO(p,q)$ also satisfy $\cdm(\Gamma)=\vcd(\Gamma)$. Since the $\BR$-rank 1 case is taken care of by Corollary \ref{crit0}, we will assume all the time that $p\le q$ and $p=\rank_\BR\SO(p,q)\ge 2$, meaning that all lattices in question are arithmetic. Let $S=\SO(p,q)/ S(\OO_p\times\OO_q)$ be the relevant symmetric space and $X$ its Borel-Serre compactification. 
\medskip

Suppose that $A\in\SO(p,q)$ is non-central and has finite order. As always, we may conjugate $A$ into the maximal compact group $S(\OO_p\times\OO_q)$. Arguing as in the cases of $\SU(p,q)$ and $\Sp(p,q)$, and taking into account the description of centralizers in orthogonal groups provided by Lemma \ref{centralizer-SO} we obtain the following lemma whose proof we leave to the reader.

\begin{lem}\label{centralizer-sopq}
Suppose that $A\in S(\OO_p\times\OO_q)$. Then there are non-negative integers $p_s,q_s,p_t,q_t,p_1,q_1,\dots,p_r,q_r$ with
$$p=p_s+p_t+2\sum_{i=1}^rp_i,\ \ q=q_s+q_t+2\sum_{i=1}^rq_i$$
such that the centralizer $C_{\SO(p,q)}(A)$ of $A$ in $\SO(p,q)$ is conjugate to 
$$C_{\SO(p,q)}(A)=S(\OO(p_s,q_s)\times\OO(p_t,q_t)\times\UU(p_1,q_1)\times\dots\times\UU(p_r,q_r)).$$
Moreover, if $r=0$ and  $(p_s,q_s)=(0,1)$ then $A$ is conjugate within $\SO(p,q)$ to $Q_{n-1,1}$, where $n=p+q$. Finally, $A$ is central if and only if $r=0$ and either $p_t=q_t=0$ or $p_s=q_s=0$.\qed
\end{lem}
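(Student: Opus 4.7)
The plan is to run the eigenspace decomposition argument used above for $\SU(p,q)$ and $\Sp(p,q)$, but with the centralizers in $\OO_n$ described by Lemma~\ref{centralizer-SO} playing the role that $\UU_n$ and $\Sp_n$ played in those cases. For $A \in S(\OO_p\times\OO_q) \subset \SO(p,q)$ of finite order, I would first decompose $\BR^{p+q}$ into the real generalized eigenspaces of $A$: the $\pm 1$-eigenspaces $V_\pm$, and, for each complex conjugate pair $\{\lambda_i,\bar\lambda_i\}$ of non-real eigenvalues on $\BS^1$, the real subspace $V_{\lambda_i}$ obtained as the real points of $E_{\lambda_i}\oplus E_{\bar\lambda_i}$ inside $\BC^{p+q}$.

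The key observation is that, because $A$ preserves the splitting $\BR^{p+q} = \BR^p \oplus \BR^q$, each summand $V$ of this decomposition inherits a splitting $V = (V\cap\BR^p)\oplus (V\cap\BR^q)$, and therefore the restriction of $Q_{p,q}$ to $V$ is non-degenerate of signature given by the dimensions of these two subsummands. I define $(p_s,q_s)$, $(p_t,q_t)$, and $(p_i,q_i)$ as these dimensions for $V_+$, $V_-$, and $V_{\lambda_i}$ respectively (in the complex case, divided by two, since $V_{\lambda_i}$ carries a natural complex structure).

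Any $g \in C_{\SO(p,q)}(A)$ must preserve each $V_\lambda$ and restrict on it to an element of $\OO(V_\lambda, Q_{p,q}|_{V_\lambda})$ commuting with $A|_{V_\lambda}$. On $V_\pm$ the element $A$ acts as a scalar, so no commutation condition is imposed and I obtain the factors $\OO(p_s,q_s)$ and $\OO(p_t,q_t)$ respectively. On $V_{\lambda_i}$, as in Section~\ref{centSO}, the operator $A|_{V_{\lambda_i}}$ (using $\lambda_i\neq\pm 1$) determines a canonical complex structure $J_i$, and $(Q_{p,q}|_{V_{\lambda_i}}, J_i)$ assembles into a Hermitian form whose signature is exactly $(p_i,q_i)$; its unitary group is $\UU(p_i,q_i)$. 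Intersecting with the determinant-one condition yields the $S(\cdot)$.

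The two remaining assertions are routine unwindings. If $r=0$ and $(p_s,q_s)=(0,1)$, then $A$ acts as $+1$ on a positive-definite line and as $-1$ on a subspace of signature $(p,q-1)$; in a suitably ordered orthonormal basis this matrix is exactly $Q_{n-1,1}$. Finally, $A$ is central in $\SO(p,q)$ iff $A=\pm\Id$, which corresponds to $r=0$ together with either $p_t=q_t=0$ (giving $A=\Id$) or $p_s=q_s=0$ (giving $A=-\Id$). The only point requiring care is the identification of the Hermitian form on the $V_{\lambda_i}$ factor and the verification that its signature is controlled precisely by the $\BR^p\oplus\BR^q$ splitting; this is where the indefiniteness of $Q_{p,q}$ makes the computation differ from the compact $\SO_n$ case treated in Lemma~\ref{centralizer-SO}, but it is bookkeeping once $J_i$ is fixed.
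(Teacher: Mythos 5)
Your proposal is correct and follows exactly the route the paper indicates. The paper states the lemma immediately after saying ``Arguing as in the cases of $\SU(p,q)$ and $\Sp(p,q)$, and taking into account the description of centralizers in orthogonal groups provided by Lemma \ref{centralizer-SO} we obtain the following lemma whose proof we leave to the reader,'' and your write-up is precisely that argument: decompose $\BR^{p+q}$ into the $\pm1$-eigenspaces and the real planes $V_{\lambda_i}$ coming from non-real eigenvalue pairs, use $A\in\OO_p\times\OO_q$ to split each summand against $\BR^p\oplus\BR^q$ so that $Q_{p,q}$ restricts non-degenerately with the stated signatures, recover $\OO(p_s,q_s)\times\OO(p_t,q_t)$ on the real-eigenvalue blocks and $\UU(p_i,q_i)$ on the complex ones via the $A$-induced complex structure, intersect with the determinant-one condition, and unwind the two special cases at the end.
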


From Lemma \ref{centralizer-sopq} we get the following bound for the dimension of the fixed point set $S^A$
\begin{equation}\label{eq-copen7}
\dim S^A=p_sq_s+p_tq_t+2\sum_{i=1}^rp_iq_i.
\end{equation}
This formula remains true for any $A\in \OO_p\times\OO_q$.  Moreover as long as $A$ is non-central this quantity is maximized if and only if $r=0$ and one of the following holds:
  \begin{itemize}
\item[i)]  $(p_s,q_s)=(0,1)$,

\item[ii)]  $(p_s,q_s)=(p,q-1)$,

\item[iii)] $p=q$ and  $(p_s,q_s)=(1,0)$,

\item[iv)] $p=q$ and  $(p_s,q_s)=(p-1,p)$.
  \end{itemize}
  In cases ii), iii) and iv) $A$ has determinant $-1$. This means that
  \begin{equation}\label{badpq}
\dim S^A\le p(q-1)
\end{equation}
and for $A\in S(\OO_p\times\OO_q)$
there is an equality if and only if $A$ is conjugate to $Q_{n-1,1}$, where $n=p+q$. This is enough to prove the following.

\begin{kor}\label{sokor1}
Suppose $\Gamma\subset\SO(p,q)$ is a lattice which either does not contain elements conjugate to $Q_{n-1,1}$ or with $\vcd(\Gamma)>p(q-1)$. Then $\cdm \Gamma=\vcd(\Gamma)$.
\end{kor}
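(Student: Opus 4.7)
The plan is to apply Corollary \ref{crit1} directly. That corollary requires verifying that $\dim S^A < \vcd(\Gamma)$ for every non-central finite order element $A \in \Gamma$, so the entire task reduces to combining the bound \eqref{badpq} with the lower bound on $\vcd(\Gamma)$ provided by Proposition \ref{vcd lower bound}, namely $\vcd(\Gamma) \ge pq - p = p(q-1)$.

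First I would note that any non-central finite order $A \in \Gamma$ can be conjugated into the maximal compact subgroup $S(\OO_p \times \OO_q)$. Lemma \ref{centralizer-sopq} and the resulting formula \eqref{eq-copen7} then yield \eqref{badpq}, giving $\dim S^A \le p(q-1)$, with equality (for $A \in S(\OO_p \times \OO_q)$) occurring only when $A$ is conjugate to $Q_{n-1,1}$, where $n = p+q$. Under the first hypothesis (that $\Gamma$ contains no element conjugate to $Q_{n-1,1}$), the inequality \eqref{badpq} is therefore strict for every non-central finite order $A \in \Gamma$, so $\dim S^A < p(q-1) \le \vcd(\Gamma)$. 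Under the alternative hypothesis $\vcd(\Gamma) > p(q-1)$, the (non-strict) bound \eqref{badpq} already gives $\dim S^A \le p(q-1) < \vcd(\Gamma)$. In either case Corollary \ref{crit1} yields $\cdm(\Gamma) = \vcd(\Gamma)$.

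Since all the substantive computational work (the centralizer classification and the resulting dimension bound) has already been done in the paragraphs preceding the statement, there is no real obstacle to overcome here. The only minor subtlety to verify is that the equality clause in \eqref{badpq} is formulated for matrices in $S(\OO_p \times \OO_q)$ (cases ii)--iv) from the preceding discussion are ruled out by having determinant $-1$), which is exactly the setting we may reduce to by conjugating finite order elements of $\Gamma$ into the maximal compact subgroup.
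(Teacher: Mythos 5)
Your proposal is correct and follows essentially the same path as the paper's own proof: combine the dimension bound \eqref{badpq} (strict off the $Q_{n-1,1}$ conjugacy class) with the lower bound $\vcd(\Gamma)\ge p(q-1)$ from Proposition \ref{vcd lower bound} and apply Corollary \ref{crit1}. The extra remark about cases ii)--iv) having determinant $-1$ is a sound justification for the equality clause in \eqref{badpq} and matches the discussion preceding the statement.
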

\begin{proof} Let $A$ be a non-central finite order element of $\Gamma$.
From \eqref{badpq} we get that $\dim S^A \le p(q-1)$ where the inequality is strict if $\Gamma$ does not contain elements conjugate to $Q_{n-1,1}$. Since for any lattice $\Gamma$ we have $\vcd(\Gamma)\ge p(q-1)$ by Proposition \ref{vcd lower bound}, we get that under either condition in the statement of the Corollary we have that $\dim S^A<\vcd(\Gamma)$. The claim now follows from Corollary \ref{crit1}.
\end{proof}

Using Corollary \ref{sokor1}, we can also deal with lattices in $\SO(2,2)$.

\begin{lem}\label{doneso22}
If $\Gamma\subset\SO(2,2)$ is a lattice, then $\vcd(\Gamma)\ge 3$ and hence $\cdm\Gamma=\vcd\Gamma$.
\end{lem}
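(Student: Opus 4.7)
The plan is to reduce the claim to the inequality $\vcd(\Gamma) \ge 3$ and then invoke Corollary \ref{sokor1}: since $p(q-1) = 2 < 3 \le \vcd(\Gamma)$, that corollary immediately yields $\cdm\Gamma = \vcd\Gamma$. So the heart of the proof is establishing the lower bound on $\vcd(\Gamma)$.

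To this end, I would exploit the isogeny $\SO(2,2) \sim \SL(2,\BR) \times \SL(2,\BR)$ arising from the Lie algebra isomorphism $\mathfrak{so}(2,2) \cong \mathfrak{sl}(2,\BR) \oplus \mathfrak{sl}(2,\BR)$. By Lemma \ref{lem:inv-finite extensions}, the virtual cohomological dimension is preserved under passage to the isogenous cover modulo its finite kernel, so we may regard $\Gamma$ as a lattice in $\SL(2,\BR) \times \SL(2,\BR)$. If $\Gamma$ is cocompact, then the symmetric space $S = G/K$ is itself a cocompact model for $\underline{E}\Gamma$, so $\vcd(\Gamma) = \dim(G/K) = 4 \ge 3$ and we are done.

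The non-cocompact case is where the real work lies: the Borel-Serre formula of Theorem \ref{borel-serre-vcd2} gives $\vcd(\Gamma) = 4 - \rank_\BQ(\BG)$, so it suffices to show $\rank_\BQ(\BG) \le 1$. Since the ambient semisimple group has real rank $2$, Margulis's arithmeticity theorem applies and forces $\Gamma$ to be arithmetic. The classification of irreducible arithmetic lattices in $\SL(2,\BR) \times \SL(2,\BR)$ via quaternion algebras over totally real quadratic fields then identifies $\Gamma$, up to commensurability, with a Hilbert modular group $\PSL(2, \mathcal{O}_K)$ for some real quadratic field $K$. The corresponding $\BQ$-algebraic group $\mathrm{Res}_{K/\BQ}\PSL_2$ has maximal $\BQ$-split torus equal to the (one-dimensional) diagonal subgroup in $\PSL_2$, giving $\rank_\BQ(\BG) = 1$ and hence $\vcd(\Gamma) = 3$.

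The main obstacle I expect in implementing this plan is the careful handling of the rational rank bound, i.e.\ confirming that no non-cocompact lattice in $\SO(2,2)$ can produce $\rank_\BQ(\BG) = 2$; this uses both Margulis's theorem and the explicit structure of restriction-of-scalars groups, and implicitly assumes we are in the irreducible setting intended for the classical-simple family.
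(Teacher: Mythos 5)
Your proof is correct (modulo the irreducibility caveat that you yourself flag), but it follows a genuinely different route from the paper. Where you pass to the isogenous group $\SL(2,\BR)\times\SL(2,\BR)$ and invoke the explicit classification of irreducible non-cocompact lattices there as Hilbert modular groups coming from $\mathrm{Res}_{K/\BQ}\SL_2$ (yielding $\rank_\BQ = 1$ and hence $\vcd = 3$), the paper argues indirectly: supposing $\vcd(\Gamma) < 3$, Proposition \ref{vcd lower bound} forces $\rank_\BQ = 2$, and Proposition \ref{prop-rank2} then produces a subgroup of $\Gamma$ commensurable to $\SL(3,\BZ)$ or $\SO(2,3)_\BZ$, each of which already has $\vcd \ge 3$, contradicting monotonicity of $\vcd$ under passage to subgroups. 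The paper's version avoids the quaternion-algebra classification entirely and stays within the machinery it has already assembled (Propositions \ref{prop-rank2} and \ref{vcd lower bound}), while your version makes the number $\rank_\BQ = 1$ explicit rather than ruling out $\rank_\BQ = 2$ by contradiction. Note that both arguments tacitly need the associated $\BQ$-algebraic group to be simple over $\BQ$ --- the paper because Proposition \ref{prop-rank2} is stated for $\BQ$-simple groups, you because you invoke Margulis and the Hilbert-modular classification; a reducible lattice such as one commensurable to $\SL(2,\BZ)\times\SL(2,\BZ)$ has $\vcd = 2$ and lies outside both arguments. You are right to flag this, since the paper passes over it silently.
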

\begin{proof}
If $\vcd(\Gamma) <3$, Proposition \ref{prop-vcd} implies that $\rank_\BQ(\Gamma)=2$. In turn, this means that $\Gamma$ contains a subgroup $\Gamma'$ commensurable to either $\SL(3,\BZ)$ or to $\SO(2,3)_\BZ$ by Proposition \ref{prop-rank2}. In the former case this implies that $\vcd(\Gamma')=\vcd(\SL(3,\BZ))=3$ and in the latter $\vcd(\Gamma')=\vcd(\SO(2,3)_\BZ)=4$. Since on the other hand we have $\vcd(\Gamma)\ge\vcd(\Gamma')$, we arrive at a contraction.
\end{proof}

From now on we assume that not only $2\le p\le q$, but also that $n=p+q\ge 5$. Corollary \ref{sokor1} covers many cases of lattices $\Gamma\subset\SO(p,q)$ but not all. We start the discussion of the remaining cases as we did for lattices in $\SL(n,\BR)$, namely by bounding the dimension of intersections of fixed point sets in
$$\CS=\{S^H \ \vert\  H\in\CF\text{ non-central and } \nexists H'\in\CF\ \hbox{ non-central with}\ S^H\subsetneq S^{H'}\}.$$ 

\begin{lem}\label{intersection-strata-so}
Let $A,B\in S(\OO_p\times\OO_q)$ be non central finite order elements with $S^A,S^B\in\CS$ distinct. Then $\dim S^{\langle A,B\rangle}\leq p(q-1)-2$  where $\langle A,B\rangle$ is the group generated by $A$ and $B$.
\end{lem}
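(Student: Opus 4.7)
The plan is to mimic the two-case strategy of Lemma \ref{intersection-strata-sl}, splitting according to whether the maximum value $p(q-1)$ for $\dim S^A$ identified in \eqref{badpq} is attained by both $A$ and $B$. The key input is the equality criterion stated after \eqref{badpq}: for an element of $S(\OO_p\times\OO_q)\cap\SO(p,q)$, the bound $\dim S^A\le p(q-1)$ is attained if and only if $A$ is conjugate in $\SO(p,q)$ to $Q_{n-1,1}$, i.e.\ is an involution whose $+1$-eigenspace is a one-dimensional positive-definite line.

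First I would handle the case when $A$ is not conjugate to $Q_{n-1,1}$ (the other side being symmetric). Then integrality of the formula \eqref{eq-copen7} forces $\dim S^A\le p(q-1)-1$. Since $S^A, S^B\in\CS$ are distinct maximal fixed-point sets, neither is contained in the other (otherwise $S^A$ or $S^B$ would fail to be maximal), so $S^{\langle A,B\rangle}=S^A\cap S^B$ is a proper closed subset of $S^A$. Since $S^A$ is a connected totally geodesic submanifold of the CAT(0) space $S$ (fixed sets of isometries on CAT(0) spaces are convex), a proper closed smooth submanifold must strictly drop in dimension, and I would obtain
\[\dim S^{\langle A,B\rangle}<\dim S^A\le p(q-1)-1,\]
hence $\dim S^{\langle A,B\rangle}\le p(q-1)-2$.

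Next, I would treat the case when both $A$ and $B$ are conjugate to $Q_{n-1,1}$. I may assume $S^A\cap S^B\ne\emptyset$ (otherwise the claim is trivial); any common fixed point allows me to simultaneously conjugate $A$ and $B$ into $S(\OO_p\times\OO_q)$. Write $L_A, L_B$ for the positive-definite one-dimensional $+1$-eigenlines and $V_A=L_A^\perp, V_B=L_B^\perp$ for the $-1$-eigenhyperplanes. Then $L_A\ne L_B$ (otherwise $V_A=V_B$ would force $A=B$), and both lines lie in the positive-definite summand $\{0\}\times\BR^q$; hence the plane $P=L_A+L_B$ is positive-definite of signature $(0,2)$, in particular non-degenerate, and $\BR^n=P\oplus P^\perp$ with $P^\perp$ of signature $(p,q-2)$. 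Any $g\in\SO(p,q)$ commuting with $A$ and $B$ preserves $L_A, L_B$ (hence acts as $\pm 1$ on each positive-definite line) and $P^\perp=V_A\cap V_B$, so the identity component of $C_{\SO(p,q)}(\langle A,B\rangle)$ is $\SO(p,q-2)^0$. By Proposition~\ref{cartan} this yields
\[\dim S^{\langle A,B\rangle}=p(q-2)=p(q-1)-p\le p(q-1)-2,\]
using $p\ge 2$.

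The main pitfall to watch out for is the non-degeneracy of the plane $P$: a degenerate span of signature $(0,1)$ would introduce a unipotent factor in the centralizer and give a symmetric-space dimension of $p(q-1)-1$, breaking the bound. This is precisely what is ruled out by the observation above, that once $A$ and $B$ share a fixed point one may simultaneously conjugate them into a common maximal compact, which forces $L_A$ and $L_B$ into the positive-definite part of $\BR^n$ so that any two distinct lines they span give a positive-definite, hence non-degenerate, plane.
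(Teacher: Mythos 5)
Your proof is correct. The first case (at least one of $A,B$ not conjugate to $Q_{n-1,1}$) follows the paper's argument verbatim. For the second case (both conjugate to $Q_{n-1,1}$), you take a genuinely different and cleaner route. The paper splits further according to whether $AB$ has order $2$: if not, it invokes a non-real eigenvalue of $AB$ and bounds $\dim S^{AB}$ via the dimension formula; if so, it simultaneously diagonalizes $A$ and $B$ and deduces $C_{\SO(p,q)}(\langle A,B\rangle)=\SO(p',q')$ with $p'+q'=p+q-2$, settling for the bound $\max\{(p-2)q,(p-1)(q-1),p(q-2)\}$. You instead compute the centralizer directly as the common stabilizer of the two eigenlines $L_A, L_B$, regardless of whether $A$ and $B$ commute. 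The key geometric input you isolate — that $L_A,L_B\subset\{0\}\times\BR^q$ because $A,B$ already lie in $S(\OO_p\times\OO_q)$, so $P=L_A+L_B$ is non-degenerate of signature $(0,2)$ and $P^\perp=V_A\cap V_B$ has signature $(p,q-2)$ — pins down the identity component of the centralizer as $\SO(p,q-2)^0$ and gives the exact value $\dim S^{\langle A,B\rangle}=p(q-2)$ in all subcases at once. This buys a sharper constant and avoids the eigenvalue case-split. One minor redundancy: since the hypothesis already places $A,B$ in the fixed maximal compact $S(\OO_p\times\OO_q)$, the step of simultaneously conjugating into a common maximal compact via a shared fixed point is unnecessary (though harmless).
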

\begin{proof} Assume first that one of the matrices $A$ or $B$ is not conjugated to $Q_{n-1,1}$. Assume that matrix is for example $A$. Then  we get from \eqref{eq-copen7} that $\dim S^{A}\le p(q-1)-1$ and hence, as $S^{\langle A,B\rangle}\subsetneq S^A$ is a closed submanifold of the connected manifold $S^A$ we have
$$\dim S^{\langle A,B\rangle}<\dim S^A\le p(q-1)-1$$ 
so we get the result.

So from now on we suppose that both $A$ and $B$ are conjugated to $Q_{n-1,1}$. If the matrix $AB$ does not have order $2$, then it has a non-real eigenvalue.  With the same notation as above this means that for $AB$, $r\ge 1$ and hence we get from \eqref{eq-copen7} that $\dim S^{AB}\le p(q-2)$ and hence that $\dim S^{\langle A,B\rangle}\le p(q-2)$. 

If $AB$ is an involution, then $A$ and $B$ commute and can be simultaneously diagonalized.  It follows that any matrix $C$ commuting with $A$ and $B$ preserves the coarsest direct sum decomposition of $\BR^{p+q}$ with the property that each factor is contained in eigenspaces of both $A$ and $B$. Since both $A$ and $B$ are different and conjugate to $Q_{n-1,1}$, we get that any such $C$ preserves two lines and a space of dimension $n-2$. So, it follows that
$$C_{\SO(p,q)}(\langle A,B\rangle)=\SO(p',q')$$
where $p'+q'=p+q-2$ and $p'\leq p$, $q'\leq q$. This means that 
\begin{align*}
\dim S^{\langle A,B\rangle}
&\le\max\{(p-2)q,(p-1)(q-1),p(q-2)\}\\
&=p(q-1)+\max\{-q+1,-p\}\le p(q-1)-2
\end{align*}
as we claimed.
\end{proof}

Equation \eqref{badpq} and Lemma \ref{intersection-strata-so} imply that that the first two conditions in Corollary \ref{cor: cohom flat} are satisfied for lattices $\Gamma\subset\SO(p,q)$ with $p+q\ge 5$. As was the case for lattices in $\SL(n,\BR)$, to check the third condition we need to be able to construct flats with certain intersection pattern. 

Again, we start giving a concrete description of fixed points sets and flats in the symmetric space $S$ associated to $\SO(p,q)$. Denote by $\CQ$ both the quadratic form $\CQ(v)=v^tQ_{p,q}v$ and the corresponding bilinear form on $\BR^n$ where $n=p+q$. Let $\Gr(p,\BR^n)$ be the Grassmannian of $p$-dimensional subspaces of $\BR^n$. The symmetric space $S=\SO(p,q)/S(\OO_p\times\OO_q)$ can be identified with the set of those $p$-dimensional planes restricted to which the form $\CQ$ is negative definite
$$S=\{W\in\Gr(p,\BR^n): \CQ\vert_W<0\}$$
Suppose that $A\in\SO(p,q)$ is conjugate to $Q_{n-1,1}$ and let 
$$\BR^n=V_A\oplus L_A,\ \ \dim V_A=n-1\ \hbox{and}\ \dim L_A=1$$
be the decomposition of $\BR^n$ as direct sum of the eigenspaces of $A$ (note that both are orthogonal with respect to $\CQ$). The fixed point set $S^A$ of $A$ consists of those $p$-dimensional subspaces $W\in S$ which are contained in $V_A$
$$S^A=\{W\in\Gr(p,\BR^n): \CQ\vert_W<0,\ W\subset V_A\}.$$
To describe maximal flats in $S$ notice that $\SO(1,1)$ is a $1$-dimensional $\BR$-split torus. The direct product $\SO(1,1)^p$ of $p$ copies of $\SO(1,1)$ is then a $p$-dimensional torus. Since $\SO(1,1)^p$ is also a subgroup of $\SO(p,q)$ and  $\SO(p,q)$ has real rank $p$, it is a maximal torus. For the sake of concreteness we construct an embedding of $\SO(1,1)^p$ into $\SO(p,q)$. Consider a $\CQ$-orthogonal direct sum decomposition 
$$\BR^n=P_1\oplus\dots\oplus P_p\oplus P_{\hbox{\tiny rest}}$$
where for $i=1,\dots,p$ we have $\dim P_i=2$ and $\CQ\vert_{P_i}$ has signature $(1,1)$. Then we have that
\begin{equation}\label{noideawhatname}
\SO(1,1)^p\simeq\SO(\CQ\vert_{P_1})\times\dots\times \SO(\CQ\vert_{P_1})\times\Id.
\end{equation}
The maximal flat in $S$ corresponding to the maximal $\BR$-split torus \eqref{noideawhatname} is the set of those $p$-dimensional spaces $W\in S$ with $\dim(W\cap P_i)=1$ for $i=1,\dots,p$. Notice that since $W\in S$ we have that $\CQ\vert_W<0$ and that this implies that $\CQ\vert_{W\cap P_i}<0$ for $i=1,\dots,p$. We summarize all this discussion in the following lemma.

\begin{lem}\label{fix-flatpq}
Suppose that $A\in\SO(p,q)$ is conjugate to $Q_{n-1,1}$ and let 
\begin{equation}\label{directsum1pq}
\BR^n=V_A\oplus L_A,\ \ \dim V_A=n-1\ \hbox{and}\ \dim L_A=1
\end{equation}
be the decomposition of $\BR^n$ as direct sum of the eigenspaces of $A$. Consider also a $\CQ$-orthogonal direct sum decomposition 
\begin{equation}\label{directsum2pq}
\BR^n=P_1\oplus\dots\oplus P_p\oplus P_{\hbox{\em\tiny rest}}
\end{equation}
with $\dim P_i=2$ and such that $\CQ\vert_{P_i}$ has signature $(1,1)$ for $i=1,\dots,p$ and let 
$$F=\{W\in S\vert \dim(W\cap P_i)=1\ \hbox{for}\ i=1,\dots,p\}$$
be the corresponding maximal flat in $X$. Then $S^A$ and $F$ intersect if and only for $i=1,\dots,p$ there is $v_i\in P_i\cap V_A$ with $\CQ(v_i)<0$.\qed
\end{lem}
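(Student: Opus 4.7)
The statement is essentially a linear algebra verification, and my plan is to prove the two directions of the biconditional separately using the two explicit descriptions of $S^A$ and $F$ given just above the lemma.

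For the forward direction, suppose $W \in S^A \cap F$. Then $W$ is a $p$-dimensional subspace with $\CQ|_W < 0$, $W \subset V_A$, and $\dim(W \cap P_i) = 1$ for each $i = 1, \ldots, p$. For each $i$, I would choose a nonzero vector $v_i \in W \cap P_i$. Since $W \subset V_A$ we get $v_i \in P_i \cap V_A$, and since $v_i$ is a nonzero vector of $W$ on which $\CQ$ is negative definite, $\CQ(v_i) < 0$. This produces the required $v_i$.

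For the converse, suppose that for each $i$ we are given $v_i \in P_i \cap V_A$ with $\CQ(v_i) < 0$. I would set $W = \mathrm{span}(v_1, \ldots, v_p)$ and verify that $W \in S^A \cap F$. Since the $P_i$ are pairwise $\CQ$-orthogonal in the decomposition \eqref{directsum2pq}, the vectors $v_1, \ldots, v_p$ are pairwise $\CQ$-orthogonal, and because each $\CQ(v_i)$ is strictly negative, they form a $\CQ$-orthogonal negative-definite system, hence are linearly independent. Thus $\dim W = p$ and $\CQ|_W$ is negative definite, i.e.\ $W \in S$. The containment $W \subset V_A$ is immediate from $v_i \in V_A$, so $W \in S^A$. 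Finally, using the direct sum decomposition $\BR^n = P_1 \oplus \cdots \oplus P_p \oplus P_{\text{rest}}$: any $w \in W \cap P_i$ can be written $w = \sum_j c_j v_j$, and comparing the $P_i$-component of both sides forces $c_j = 0$ for $j \neq i$, so $w \in \BR v_i$ and $\dim(W \cap P_i) = 1$. Hence $W \in F$.

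There is no real obstacle here; the argument is a direct translation of the geometric descriptions of $S^A$ and $F$ into the linear-algebraic conditions on the $v_i$. The only mild subtlety is to ensure that the $v_i$ constructed in the forward direction are nonzero (which follows from $\dim(W \cap P_i) = 1$) and that the converse really lands in $S$ (which uses the mutual $\CQ$-orthogonality of the $P_i$'s to diagonalise $\CQ|_W$).
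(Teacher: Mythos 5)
Your proof is correct and fills in exactly the verification that the paper leaves implicit (the lemma is stated with a \qed as a summary of the preceding paragraph's description of $S^A$ and $F$). Both directions are handled properly; the only stylistic remark is that in the converse the linear independence of $v_1,\dots,v_p$ is also immediate from the fact that they lie in distinct summands of the direct sum $P_1\oplus\dots\oplus P_p\oplus P_{\text{rest}}$, so the appeal to nondegeneracy of a negative-definite form, while valid, is not needed.
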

To clarify the meaning of Lemma \ref{fix-flatpq}, consider the following consequence.

\begin{lem}\label{lem-lapq}
With the same notation as in Lemma \ref{fix-flatpq} suppose that the restriction $\CQ\vert_{P_1\cap V_A}$ of $\CQ$ to $P_1\cap V_A$ is positive definite. Then $F\cap S^A=\emptyset$.\qed
\end{lem}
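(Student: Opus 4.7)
The plan is to read off the conclusion directly from Lemma \ref{fix-flatpq}. That lemma supplies the equivalence
\[ F\cap S^A\neq\emptyset \iff \text{for each } i=1,\dots,p,\ \exists\, v_i\in P_i\cap V_A \text{ with } \CQ(v_i)<0. \]
So to force $F\cap S^A=\emptyset$ it suffices to produce a single index $i$ for which no such negative vector exists.

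Under the hypothesis that $\CQ|_{P_1\cap V_A}$ is positive definite, any nonzero $v\in P_1\cap V_A$ satisfies $\CQ(v)>0$, so the set $\{v_1\in P_1\cap V_A:\CQ(v_1)<0\}$ is empty (and this is also vacuously true if $P_1\cap V_A=0$). Taking $i=1$ in the criterion above, the ``only if'' direction of Lemma \ref{fix-flatpq} immediately yields $F\cap S^A=\emptyset$. There is really no computation to do; the work has all been absorbed into Lemma \ref{fix-flatpq}.

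Conceptually, this lemma plays the same role for $\SO(p,q)$ that Lemma \ref{lem-la} played for $\SL(n,\BR)$: it isolates an \emph{open} geometric condition on the $\CQ$-orthogonal decomposition \eqref{directsum2pq} --- positive definiteness of $\CQ$ on the two-dimensional subspace $P_1\cap V_A$ --- that forces the maximal flat $F$ to miss the fixed-point set $S^A$. Because the condition is open in the choice of $P_1$ (and stable under perturbations of $F$ by small elements of $\SO(p,q)$), it is exactly the kind of input needed in the sequel to combine with Lemma \ref{density flats} and produce maximal \emph{rational} flats with the required avoidance property against a finite list of top-dimensional fixed-point sets, so that the hypotheses of Corollary \ref{cor: cohom flat} can be verified for lattices in $\SO(p,q)$.
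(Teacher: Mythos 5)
Your proof is correct and matches the paper's intent exactly: the lemma is marked with a \qed in the statement precisely because it is the immediate contrapositive of the criterion in Lemma \ref{fix-flatpq} applied at index $i=1$, which is what you spell out. The extra contextual remarks about openness are accurate but not needed for the lemma itself.
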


We are now ready to prove the analogue of Proposition \ref{prop:keysl}.

\begin{prop}\label{prop:keyso}
Suppose that $A_1,\dots,A_r\in\SO(p,q)$ are such that $A_i$ is conjugate to $Q_{n-1,1}$ for all $i$ and such that $S^{A_i}\neq S^{A_j}$ for all $i\neq j$. Then there is a maximal flat $F\subset S$ and an open neighborhood $U$ of $\Id\in\SO(p,q)$ such that for all $g\in U$ we have 
\begin{enumerate}
\item $gF$ intersects $S^{A_1}$ transversely in a point, and
\item $gF$ is disjoint of $S^{A_i}$ for $i\in\{2,\dots,r\}$.
\end{enumerate}
\end{prop}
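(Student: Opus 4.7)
My plan is to mirror the proof of Proposition \ref{prop:keysl}, using Lemma \ref{lem-lapq} in place of Lemma \ref{lem-la}. First, I would observe that both conditions (1) and (2) are open in $g\in\SO(p,q)$: since $F$ and $S^{A_1}$ have complementary dimensions in $S$ (namely $\dim F=p$ and $\dim S^{A_1}=p(q-1)$, with $\dim S=pq$), a transverse single-point intersection is preserved under small perturbations of $g$, and disjointness from the closed submanifolds $S^{A_i}$ is clearly open. It therefore suffices to exhibit a single maximal flat $F$ satisfying (1) and (2).

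By Lemma \ref{fix-flatpq}, such a flat corresponds to a $\CQ$-orthogonal decomposition $\BR^n=P_1\oplus\cdots\oplus P_p\oplus P_{\mathrm{rest}}$ with each $P_i$ of signature $(1,1)$. I would translate the conditions as: (a) each $P_i\cap V_{A_1}$ is a one-dimensional negative line, so that $F\cap S^{A_1}$ is the single transverse point $\bigoplus_i(P_i\cap V_{A_1})$ by Lemma \ref{fix-flatpq}; and (b) each $P_1\cap V_{A_j}$ is a one-dimensional positive line for $j=2,\ldots,r$, ensuring $F\cap S^{A_j}=\emptyset$ by Lemma \ref{lem-lapq}.

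To construct such a decomposition, I would first pick a maximal $\CQ$-negative subspace $W_0\subset V_{A_1}$ of dimension $p$, which exists since $V_{A_1}$ has signature $(p,q-1)$, together with a $\CQ$-orthonormal basis $e_1,\ldots,e_p$. The $\CQ$-orthogonal complement $W_0^\perp$ is then a $q$-dimensional positive definite subspace containing $L_{A_1}$ as its 1-dimensional subspace perpendicular to the hyperplane $H:=W_0^\perp\cap V_{A_1}$. Since $p\leq q$, I could choose orthonormal $u_1,\ldots,u_p\in W_0^\perp$ with each $u_i\notin H$, because a generic orthonormal $p$-tuple in the $q$-dimensional Euclidean space $W_0^\perp$ avoids any given hyperplane. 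Setting $P_i=\mathrm{span}(e_i,u_i)$ then produces the required mutually $\CQ$-orthogonal hyperbolic planes, and since $u_i\notin V_{A_1}$ we obtain $P_i\cap V_{A_1}=\BR e_i$, verifying (a).

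To handle (b), a direct computation will show that if $\alpha_j=\pi_j(e_1)$ and $\beta_j=\pi_j(u_1)$ denote the $L_{A_j}$-components (with $\pi_j$ the projection along $V_{A_j}$), then $P_1\cap V_{A_j}$ is spanned by $\beta_j e_1-\alpha_j u_1$ with $\CQ$-value $\alpha_j^2-\beta_j^2$, reducing (b) to the inequalities $|\alpha_j|>|\beta_j|$ for each $j\geq 2$. Since the hypothesis $S^{A_j}\neq S^{A_1}$ forces $V_{A_j}\neq V_{A_1}$, a generic choice of $e_1\in W_0$ guarantees $\alpha_j\neq 0$ for all such $j$, and the remaining condition $|\beta_j|<|\alpha_j|$ is an open condition on $u_1$. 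The hard part will be verifying that these open conditions on $u_1$ are simultaneously compatible with the orthogonality constraints defining the other $u_i$'s; I expect this to follow from a careful Gram-matrix analysis, exploiting crucially that $q\geq p$ leaves enough room in $W_0^\perp$ to accommodate all the required constraints.
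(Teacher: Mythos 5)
Your overall plan — choose a $\CQ$-orthogonal decomposition $P_1\oplus\cdots\oplus P_p\oplus P_{\mathrm{rest}}$ and read off the intersection conditions via Lemmas \ref{fix-flatpq} and \ref{lem-lapq} — is the right framework, and your observation that requiring each $P_i\cap V_{A_1}$ to be a line forces $F\cap S^{A_1}$ to be exactly $\{W_0\}$ is a nice, clean point. But the construction has a genuine gap at exactly the step you try to brush past, and you have misidentified where the difficulty lies.

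The real obstruction is not the compatibility of the conditions on $u_1$ with the choice of $u_2,\dots,u_p$; once $u_1$ is fixed, the remaining $u_i$ only need to be orthonormal in $W_0^\perp$, orthogonal to $u_1$, and generic enough to avoid $H$, which is no issue since $q\ge p$. The actual problem is whether \emph{any} unit vector $u_1\in W_0^\perp\setminus H$ satisfies all the slab inequalities $|\beta_j|<|\alpha_j|$. This is not an empty condition just because it is open: writing $w_j$ for a $\CQ$-unit spanner of $L_{A_j}$ and $w_j=w_j^-+w_j^+$ for its decomposition along $W_0\oplus W_0^\perp$, one has $|\alpha_j|=|\CQ(e_1,w_j^-)|\le\|w_j^-\|$ while $\|w_j^+\|^2=1+\|w_j^-\|^2>\|w_j^-\|^2$. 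In particular, if some $V_{A_j}$ is close to $V_{A_1}$ (so that $w_j\approx w_1\in W_0^\perp$ and $\|w_j^-\|$ is tiny for \emph{every} admissible $W_0\subset V_{A_1}$), the bound $|\alpha_j|\le\|w_j^-\|$ is a hard, uniformly small ceiling; forcing $|\beta_j|=|\CQ(u_1,w_j^+)|$ below it pushes $u_1$ into $H$, which is forbidden. Your Gram-matrix heuristic does not address this because the quantities $\alpha_j$ and $\beta_j$ are both $O(1)$ in your normalization, with no mechanism to make the negative cone of $P_1$ thin.

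This is precisely the point at which the paper uses a qualitatively different idea. Rather than fixing a full maximal negative $p$-plane $W_0$ and building $P_1$ from two unit vectors of opposite sign, the paper picks $v_0$ on the light cone of $\CQ|_{V_{A_1}}$ avoiding the finitely many hyperplanes $V_{A_j}$ (possible since the light cone is not contained in a finite union of linear subspaces), sets $P_1=\mathrm{span}(v_0,w)$ with $w\in L_{A_1}$, and observes that $\CQ|_{P_1}$ is then positive semi-definite with kernel exactly $\BR v_0$. Consequently every nonzero vector of $P_1\cap V_{A_j}$ has strictly positive norm \emph{automatically}, with no inequality to verify, and one then perturbs $v_0$ to a nearby $v_1\in V_{A_1}$ with $\CQ(v_1)<0$, using openness. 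In the language of your setup, this amounts to letting the negative direction $e_1$ of $P_1$ approach the light cone, so that the negative cone of $P_1$ collapses to a line; your normalization $\CQ(e_1)=-1$ forbids exactly this degeneration. To repair your argument you would essentially have to reintroduce the light-cone trick, so I would recommend following the paper's route.
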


\begin{proof}
For $i=1,\dots,r$, let 
$$\BR^n=L_i\oplus V_i,\ \ \dim L_i=1$$ 
be the direct sum decomposition whose factors are the eigenspaces of $A_i$ and observe  $V_i\neq V_1$ for all $i\neq 1$ because $L_i$ is the $\CQ$-orthogonal complement of $V_i$ and we are assuming that $A_i\neq A_1$. Noting that the light cone (i.e. the set of isotropic vectors) of $\CQ$ on $V_1$ is not contained in finitely many linear subspaces we can find a non-zero vector $v_0\in V_1\setminus\cup_{i\neq 1}V_i$ with $\CQ(v_0)=0$. Fix also a non-zero vector $w\in L_1$. Then for all $a,b\in\BR$ we have
$$\CQ(av_0+bw)=a^2\CQ(v_0)+b^2\CQ(w)=b^2\CQ(w)\ge 0$$
with equality if and only if $b=0$. This implies that for each $i\ge 2$ and each non-zero $v\in\Span(v_0,w)\cap V_i$ one has $\CQ(v)>0$. It follows that the same is true for any other $v_1\in V_1$ sufficiently close to $v_0$. Since $v_0$ is in the closure of the set $\{\CQ<0\}$ we deduce that there is $v_1\in V_1$ such that
\begin{itemize}
\item $\CQ(v_1)<0$, and
\item $\CQ(v)>0$ for each non-zero $v\in\Span(v_1,w)\cap V_i$ and each $i\ge 2$.
\end{itemize}
Consider the 2-dimensional plane 
$$P_1=\Span(v_1,w)$$ and
choose further 2-dimensional planes $P_2,\dots,P_p$ such that $\CQ\vert_{P_j}$ has signature $(1,1)$ for each $j=2,\dots,p$, such that $P_i$ is $\CQ$-orthogonal to $P_j$ for all $i\neq j$, $i,j\in\{1,\dots,p\}$. Now choose $P_{\hbox{\tiny rest}}$ such that 
\begin{equation}\label{right-flat}
\BR^n=P_1\oplus\dots\oplus P_p\oplus P_{\hbox{\tiny rest}}
\end{equation}
 is a $\CQ$-orthogonal direct sum decomposition of $\BR^n$.
By construction \[P_2,\dots,P_p\subset V_1\] and $\CQ\vert_{P_1\cap V_1}<0$. In particular, it follows from Lemma \ref{fix-flatpq} that the flat $F$ associated to \eqref{right-flat} meets $S^{A_1}$, i.e.
$$F\cap S^{A_1}\neq\emptyset.$$
On the other hand, again by construction we have that $\CQ\vert_{P_1\cap V_i}>0$ for each $i\neq 1$. It follows hence from Lemma \ref{lem-lapq} that 
$$F\cap S^{A_i}=\emptyset\ \ \hbox{for all}\ i\neq 1.$$
All this means that the flat $F$ satisfies the two conditions in the statement of Proposition \ref{prop:keyso}. Moreover, by construction any other flat $gF$ sufficiently close to $F$ does as well. This finishes the proof.
\end{proof}

\begin{rmk*}{\rm Lemma \ref{intersection-strata-so} and Proposition \ref{prop:keyso} can be extended to the case when the relevant matrices lie  in $\OO_p\times\OO_q$ only. The proof of Lemma \ref{intersection-strata-so} needs only  minor modifications to include the cases when the matrices $A$ and $B$ are of types ii), iii) and iv).  Consider now Proposition \ref{prop:keyso}. If $A_1$ is of type ii) we can use the same proof  just interchanging the role played by the 1 and -1 eigenspaces. By the same reason cases iii) and iv) are symmetric so we may assume that $A_1\in\OO_p\times\OO_q$ corresponds to case, say, iii) and therefore that $p=q$ and  $(p_s,q_s)=(1,0)$. The proof of Proposition  \ref{prop:keyso} can be modified as follows: one checks that 
$$S^A=\{W_A\oplus L_A:W_A\in\Gr(p-1,\BR^n): \CQ\vert_{W_A}<0,\ W_A\subset V_A\}.$$
where $\BR^n=V_A\oplus L_A$ is the decomposition in eigenspaces for $A$. Take the element that was denoted $w$ in the proof of  Proposition  \ref{prop:keyso} to be in $V_A$ and take $P_2=L_A\oplus\Span(v_2)$ for a suitable $v_2\in V_A$. The rest of the subspaces $P_i$ can be taken in the same way as in the current proof.
}
\end{rmk*}

Armed with Proposition \ref{prop:keyso}, we come to the main result of this section.

\begin{lem}\label{lem:donesopq}
If $p+q\ge 5$ and $\Gamma\subset\SO(p,q)$ is a lattice, then $\gdim(\Gamma)=\vcd(\Gamma)$.
\end{lem}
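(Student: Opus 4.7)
The plan is to follow the template of Lemma \ref{lem:doneslnr} virtually unchanged. I begin by handling $p=1$ (real rank one) via Corollary \ref{crit0}. For $p\ge 2$, Margulis' theorem ensures arithmeticity, and Corollary \ref{sokor1} reduces the problem to the situation where $\Gamma$ contains an element conjugate to $Q_{n-1,1}$ and $\vcd(\Gamma)=p(q-1)$. In this case Proposition \ref{vcd lower bound} forces $\rank_\BQ(\BG)=\rank_\BR(\BG)=p$, so maximal rational flats coincide with maximal flats and Lemma \ref{density flats} becomes available.

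I then plan to verify the three hypotheses of Corollary \ref{cor: cohom flat} with $d=\vcd(\Gamma)=p(q-1)$. Hypothesis (1) is exactly the bound \eqref{badpq}, and hypothesis (2) is Lemma \ref{intersection-strata-so} (applied after simultaneously conjugating any pair $A,B$ with $S^A\cap S^B\neq\emptyset$ into a common maximal compact $S(\OO_p\times\OO_q)$, which is possible since they share a fixed point). For hypothesis (3), given pairwise distinct $S^{A_i}\in\CS$ with $\dim S^{A_i}=\vcd(\Gamma)$ and $C_\Gamma(A_i)$ cocompact in $C_G(A_i)$, the case analysis surrounding \eqref{badpq} forces each $A_i$ to be conjugate (inside $S(\OO_p\times\OO_q)$) to $Q_{n-1,1}$. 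Proposition \ref{prop:keyso} then yields a maximal flat $F$ and an open neighborhood $U$ of $\Id\in\SO(p,q)$ such that every $gF$ with $g\in U$ meets $S^{A_1}$ transversely in one point while avoiding the remaining $S^{A_i}$; applying Lemma \ref{density flats} picks out some $g\in U$ making $gF$ rational, so hypothesis (3) holds.

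With all three hypotheses satisfied, Corollary \ref{cor: cohom flat} gives $\cdm(\Gamma)=\vcd(\Gamma)$. Since $p+q\ge 5$ together with $p\le q$ forces $q\ge 3$, one has $\vcd(\Gamma)\ge p(q-1)\ge 4$, so Theorem \ref{Luck-Meintrup} upgrades this to the desired $\gdim(\Gamma)=\vcd(\Gamma)$. The substantive geometric content has already been isolated in Proposition \ref{prop:keyso}; the main obstacle in this lemma would have been hypothesis (3) of Corollary \ref{cor: cohom flat}, but with that proposition in hand the present proof reduces to assembling previously-established ingredients, exactly mirroring the argument used for $\SL(n,\BR)$.
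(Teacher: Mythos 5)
Your proposal matches the paper's proof essentially step for step: reduction via Corollary \ref{sokor1} to the case $\vcd(\Gamma)=p(q-1)$, verification of the three hypotheses of Corollary \ref{cor: cohom flat} using \eqref{badpq}, Lemma \ref{intersection-strata-so}, Proposition \ref{prop:keyso}, and Lemma \ref{density flats}. The only (welcome) extra care you take is spelling out the $\cdm\to\gdim$ upgrade via Theorem \ref{Luck-Meintrup}, which the paper postpones to the proof of the Main Theorem even though the lemma's statement uses $\gdim$.
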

\begin{proof}
The argument is basically the same as that to prove Lemma \ref{lem:doneslnr}. From Corollary \ref{sokor1} we already know that the claim holds for all lattices $\Gamma\subset\SO(p,q)$, except those with
$$d=\vcd(\Gamma)=p(q-1)$$
Suppose from now on that this is the case, and note that Proposition \ref{vcd lower bound} implies that $\rank_\BQ(\Gamma)=\rank_\BR(\SO(p,q))$, meaning that maximal rational flats are actually maximal flats. From \eqref{badpq} and from Lemma \ref{intersection-strata-so} we get that the first two conditions in Corollary \ref{cor: cohom flat} are satisfied and we claim that also the third condition holds as well. To check that this is the case, suppose that we have $A_1,\dots,A_r\in\Gamma$ of finite order, non-central, with $\dim(S^{A_i})=\vcd(\Gamma)$ for all $i$ and with $S^{A_i}\neq S^{A_j}$ for all $i\neq j$. Then (\ref{badpq}) implies that each $A_i$ is conjugate in $\SO(p,q)$ to $Q_{n-1,1}$. Now Proposition \ref{prop:keyso} asserts that there is a maximal flat $F$ and an open neighborhood $U$ of $\Id\in\SO(p,q)$ such that for all $g\in U$ we have 
\begin{enumerate}
\item $gF$ intersects $S^{A_1}$ transversely in a point, and
\item $gF$ is disjoint of $S^{A_i}$ for $i\in\{2,\dots,r\}$.
\end{enumerate}
Now, Lemma \ref{density flats} implies that we can find $g\in U$ such that $gF$ is rational. This proves that the third condition in Corollary \ref{cor: cohom flat} is also satisfied, and thus that $\vcd(\Gamma)=\cdm(\Gamma)$.
\end{proof}

\section{Proof of the main theorem} \label{sec: proof main}
We are now ready to finish proof of the  Main Theorem.

\begin{mthm}
We have $\gdim(\Gamma)=\vcd(\Gamma)$ for every lattice $\Gamma$ in a classical simple Lie group.
\end{mthm}

\begin{proof}
First recall that the claim holds true for all lattices in groups of real rank 1 by Corollary \ref{crit0}. We might thus restrict ourselves to groups of higher rank, getting hence that the involved lattices are arithmetic. For the convenience of the reader we present the list of the classical simple Lie groups with $\rank_\BR\ge 2$

\begin{align*}
&\SL(n,\BC)=\{A\in\GL(n,\BC)\vert \det A=1\} & & n\ge 3 \\
& \SO(n,\BC)=\{A\in\SL(n,\BC) \vert A^tA=\Id\} & & n\ge 5 \\
&\Sp(2n,\BC)=\{A\in\SL(2n,\BC) \vert A^tJ_nA=J_n\} & & n\ge 2 \\
&\SL(n,\BR)=\{A\in\GL(n,\BR)\vert \det A=1\} & & n\ge 3 \\
&\SL(n,\BH)=\{A\in\GL(n,\BH)\vert \det A=1\} & & n\ge 3 \\
&\SO(p,q)=\{A\in\SL(p+q,\BR)\vert A^*Q_{p,q}A=Q_{p,q}\}& & 2\le p\le q \\
&\SU(p,q)=\{A\in\SL(p+q,\BC)\vert A^*Q_{p,q}A=Q_{p,q}\}& & 2\le p\le q \\
&\Sp(p,q)=\{A\in\GL(p+q,\BH)\vert A^*Q_{p,q}A=Q_{p,q}\}& & 2\le p\le q \\
&\Sp(2n,\BR)=\{A\in\SL(2n,\BR)\vert A^tJ_nA=J_n\} & & n\ge 2 \\
&\SO^*(2n)=\{A\in\SU(n,n)\vert A^tQ_{n,n}J_nA=Q_{n,n}J_n\} & & n\ge 4
\end{align*}
Note now that it follows from Proposition \ref{vcd lower bound} that if $\Gamma$ is a lattice in a higher rank classical Lie group then $\vcd(\Gamma)\ge 3$ with the only possible exceptions of lattices in $\SO(2,2)$ and $\Sp(4,\BR)$. In fact, every lattice in $\SO(2,2)$ has $\vcd\ge 3$ by Lemma \ref{doneso22} and the same is true for lattices in $\Sp(4,\BR)$ because $\SO(2,2)$ and $\Sp(4,\BR)$ are isogenous. Since all the lattices under consideration have $\vcd\ge 3$, they all have $\cdm\ge 3$ and hence we get from Theorem \ref{Luck-Meintrup} that 
$$\gdim(\Gamma)=\cdm(\Gamma).$$
In particular, it suffices to prove  $\cdm(\Gamma)=\vcd(\Gamma)$ for every lattice in any of the groups in the list we just gave. Basically, we have already proved that this is the case. In fact, it suffices to go through the list and collect what we already know for each one of them.

\begin{itemize}
\item $\SL(n,\BC)$: Lemma \ref{done-slnc} asserts that if $\Gamma$ is a lattice in $\SL(n,\BC)$ for $n\geq 2 $, then $\cdm\Gamma=\vcd(\Gamma)$.
\item $\SO(n,\BC)$: Lemma \ref{done-sonc} asserts that if $\Gamma\subset\SO(n,\BC)$ is a lattice where $n\ge 5$, then $\cdm\Gamma=\vcd(\Gamma)$.
\item $\Sp(2n,\BC)$: Lemma \ref{done-sp2nc} asserts that if $\Gamma$ is a lattice in $\Sp(2n,\BC)$ for $n\ge 2$, then we have $\cdm(\Gamma)=\vcd(\Gamma)$.
\item $\SL(n,\BR)$: Lemma \ref{lem:doneslnr} asserts that $\cdm(\Gamma)=\vcd(\Gamma)$ for every lattice $\Gamma\subset\SL(n,\BR)$, if $n\ge 4$. By Lemma \ref{donesl3r}, the same holds true for lattices in $\SL(3,\BR)$.
\item $\SL(n,\BH)$: Lemma \ref{done-slnh} asserts that for every lattice $\Gamma$ in $\SL(n,\BH)$ for $n\ge 3$, we have $\cdm(\Gamma)=\vcd(\Gamma)$.
\item $\SO(p,q)$: Lemma \ref{lem:donesopq} asserts that if $p+q\ge 5$ and if $\Gamma\subset\SO(p,q)$ is a lattice, then $\cdm(\Gamma)=\vcd(\Gamma)$. We also have $\cdm(\Gamma)=\vcd(\Gamma)$ for lattices in $\SO(2,2)$ by Lemma \ref{doneso22}.
\item $\SU(p,q)$: We have that $\cdm(\Gamma)=\vcd(\Gamma)$ for every lattice $\Gamma$ in $\SU(p,q)$ by Lemma \ref{done-supq}.
\item $\Sp(p,q)$: We also have that $\cdm(\Gamma)=\vcd(\Gamma)$ for every lattice $\Gamma$ in $\Sp(p,q)$ by Lemma \ref{done-sppq}.
\item $\Sp(2n,\BR)$: Lemma \ref{done-sp2nr} asserts that $\cdm(\Gamma)=\vcd(\Gamma)$ for every lattice $\Gamma\subset\Sp(2n,\BR)$, if $n\ge 3$. Since $\Sp(4,\BR)$ is isogenous to $\SO(2,3)$, it follows from Lemma \ref{doneso22} and Lemma \ref{lem:inv-finite extensions} that the same is true for every lattice in $\Sp(4,\BR)$.
\item $\SO^*(2n)$: We get that $\cdm(\Gamma)=\vcd(\Gamma)$ for every lattice $\Gamma\subset\SO^*(2n)$ from Lemma \ref{done-noname}.
\end{itemize}
Summing up, we have proved that if $\Gamma$ is a lattice in a classical simple Lie group then $\vcd(\Gamma)=\gdim(\Gamma)$, as claimed.
\end{proof}
We end this paper with the proof of Corollary \ref{cor}.
\begin{proof}
The real rank one case is dealt with by Proposition \ref{thin-thick} and Proposition \ref{borel-serre-vcd1}. Now consider the higher rank case, so $d=\mathrm{vcd}(\Gamma)\geq 3$. By the main theorem, the group $\Gamma$ admits a model for $\underline{E}\Gamma$ of dimension $d$. Moreover, the Borel-Serre bordification $X$ of $S=G/K$ is a cocompact model for $\underline{E}\Gamma$. The cellular chain complexes of all fixed points set $X^H$ for $H \in \mathcal{FIN}$ assemble to form a finite free resolution $C_{\ast}(X^{-}) \rightarrow \underline{\mathbb{Z}}$ in the category of $\mathcal{O}_{\mathcal{FIN}}\Gamma$-modules. Since $\underline{\mathrm{cd}}(\Gamma)=d$, there kernel of the map $C_{d-1}(X^{-}) \rightarrow C_{d-2}(X^{-})$ is a projective $\mathcal{O}_{\mathcal{FIN}}\Gamma$-module $P$. Since $C_{\ast}(X^{-})\rightarrow \underline{\mathbb{Z}}$ is a finite length resolution consisting of finitely generated free $\mathcal{O}_{\mathcal{FIN}}\Gamma$-modules, a standard trick in homological algebra involving Schanuel's lemma (see for example the discussion before \cite[Proposition 6.5]{brown}) implies that there exists a finitely generated free $\mathcal{O}_{\mathcal{FIN}}\Gamma$-module $F$ such that $F\oplus P$ is finitely generated and free, i.e.~$P$ is stably free. Hence, we have a resolution of length $d$
\[     0 \rightarrow    P\oplus F \rightarrow F \oplus C_{d-1}(X^{-}) \rightarrow C_{d-2}(X^{-})  \rightarrow \ldots \rightarrow C_{0}(X^{-})\rightarrow \underline{\mathbb{Z}} \rightarrow 0. \]
consisting of finitely generated free $\mathcal{O}_{\mathcal{FIN}}\Gamma$-modules. One can now apply the procedure of \cite[prop 2.5]{LuckMeintrup} and \cite[Th. 13.19]{Luck} to construct a cocompact model for $\underline{E}\Gamma$ of dimension $d$. Since all models for $\underline{E}\Gamma$ are $\Gamma$-equivariantly homotopy equivalent and the symmetric space $S=G/K$ is also a model for $\underline{E}\Gamma$, we conclude that $S$ is $\Gamma$-equivariantly homotopy equivalent to a cocompact $\Gamma$-CW-complex of dimension $\mathrm{vcd}(\Gamma)$.

\end{proof}

\end{document}